\DeclareMathOperator{\sign}{sign}
\numberwithin{equation}{section}
\theoremstyle{plain}
\newtheorem{exam}{Example}[section]
\newtheorem{theorem}[exam]{Theorem}
\newtheorem{lemma}[exam]{Lemma}
\newtheorem{remark}[exam]{Remark}
\newtheorem{proposition}[exam]{Proposition}
\newtheorem{definition}[exam]{Definition}
\newtheorem{corollary}[exam]{Corollary}
\title{Global Hopf bifurcation and connected components in a delayed predator-prey model}
\author{Wael El Khateeb\footnotemark[2] ,\ \  Guihong Fan\footnotemark[3] , \
\ Chunhua Shan\footnotemark[2] ,\  \ and Hao Wang\footnotemark[4]\ $^,$\footnotemark[1]}
\date{}
\begin{document}
\maketitle

\begin{abstract}
We study the dynamics of a delayed predator-prey system with Holling type II functional response, focusing on the interplay between time delay and carrying capacity. Using local and global Hopf bifurcation theory, we establish the existence of sequences of bifurcations as the delay parameter varies, and prove that the connected components of global Hopf branches are nested under suitable conditions. A novel contribution is to show that the classical limit cycle of the non-delayed system belongs to a connected component of the global Hopf bifurcation in Fuller’s space. Our analysis combines rigorous functional differential equation theory with continuation methods to characterize the structure and boundedness of bifurcation branches. We further demonstrate that delays can induce oscillatory coexistence at lower carrying capacities than in the corresponding ODE model, yielding counterintuitive biological insights. The results contribute to the broader theory of global bifurcations in delay differential equations while providing new perspectives on nonlinear population dynamics.

\vskip 0.2cm

\noindent {\bf Keywords.} Periodic solutions; Global Hopf bifurcation; Nested connected component; Combined effects of delay and carrying capacity; Fuller’s space.
\end{abstract}


\footnotetext[2]{Department of Mathematics and Statistics, The University of Toledo, Toledo, OH 43606, USA.}
\footnotetext[3]{Department of Mathematics, Columbus State University, Columbus, GA 31907, USA}
\footnotetext[4]{Department of Mathematical and Statistical Sciences, University of Alberta, Edmonton, Alberta, T6G 2G1, Canada.}
\footnotetext[1]{Corresponding author. {\it E-mail address:} hao8@ualberta.ca (H. Wang).}

\section{Introduction}

In the early twentieth century, Lotka and Volterra pioneered the mathematical study of population dynamics, introducing models that remain central to theoretical ecology. Their work provided a foundational framework for describing predator-prey interactions and inspired a wide range of generalizations. A commonly studied form of the generalized Lotka-Volterra model is
\begin{equation}
    \begin{cases}
         \dfrac{dx(t)}{dt}= x\alpha(x)- p(x)y, \\
         \dfrac{dy(t)}{dt}=cp(x)y-dy,
    \end{cases}
\end{equation}
where $x(t)$ and $y(t)$ denote the prey and predator densities at time $t$, respectively. Here, $\alpha(x)$ describes the per capita prey growth rate in the absence of predation, $c$ is the conversion efficiency of prey into predator biomass, $d$ is the predator mortality rate, and $p(x)$ is the functional response characterizing the predation rate as a function of prey density.

To capture the biological reality that consumption does not instantly translate into predator reproduction, a time delay $\tau$ is often introduced. Specifically, $\tau$ represents the period between prey capture and its conversion into viable predator biomass. The survival probability of predators during this interval is given by $e^{-d\tau}$ \cite{Wolkowicz2006}. Assuming logistic growth for the prey population and a Holling type II functional response, we obtain the delayed predator-prey system
\begin{equation} \label{modelH2}
    \begin{cases}
        \dfrac{dx(t)}{dt}=rx(t)\!\left(1-\dfrac{x(t)}{K}\right)-\dfrac{mx(t)y(t)}{a+x(t)}, \\[1.2ex]
        \dfrac{dy(t)}{dt}=-dy(t)+e^{-d\tau}\dfrac{cmx(t-\tau)y(t-\tau)}{a+x(t-\tau)},
    \end{cases}
\end{equation}
where $r$ is the intrinsic prey growth rate, $K$ is the environmental carrying capacity, $a$ is the half-saturation constant, and $m$ is the predation rate. All parameters are positive. The incorporation of $\tau$ renders the model more realistic by reflecting the maturation time of predators and compensating for mortality during the delay.

Delayed predator-prey systems of this type have been extensively studied from both biological and mathematical perspectives; see \cite{Cook2006, Fan2021, Kuang2004, Wang2014, Li2023, Mai2019, Song2005, Wang2009, Ruan2001, Xu2012, Shu2024} and references therein. An alternative biological interpretation of the delay and survival term $e^{-d\tau}$ was proposed by Gourley and Kuang \cite{Kuang2004}, who studied stage-structured models in which juvenile predators experience a mortality rate $d_j \neq d$ during the maturation period $\tau$, leading to the survival factor $e^{-d_j\tau}$. Related stage-structured models with Holling type I and II responses were analyzed by Gourley and Kuang \cite{Kuang2004} and by Li, Lin, and Wang \cite{Wang2014} using Hopf bifurcation analysis. Cooke, Elderkin, and Huang \cite{Cook2006} incorporated juvenile prey mortality into similar delayed systems, while Fan and Wolkowicz \cite{Fan2021} considered model \eqref{modelH2} with a Holling type I response, demonstrating the existence of local Hopf bifurcations and a cascade of period-doublings leading to chaos. Diffusive extensions with predator maturation delay have also been investigated, for instance by Xu et al.\ \cite{Shu2024} in the context of global Hopf bifurcation. More broadly, delay has been employed in predator-prey models to capture diverse biological mechanisms and to explore their rich dynamical behavior \cite{Cook1999, Shu2015, Ruan2001, Li2023, Song2005, Wang2009, Xu2012, Mai2019}.

It is well known that the corresponding ordinary differential equation (ODE) model obtained by setting $\tau=0$ in \eqref{modelH2} admits a unique globally asymptotically stable periodic orbit. The focus of this paper is to uncover the richer dynamics that emerge for $\tau > 0$, with particular emphasis on the global structure of Hopf bifurcation. Our main results are as follows:
\begin{itemize}
\setlength\itemsep{0.25em}
\item There exists a sequence of local Hopf bifurcations as $\tau$ (or $K$) varies. Under suitable conditions, the connected components of the global Hopf bifurcation are nested with respect to~$\tau$.
\item The limit cycle of the ODE model belongs to a connected component of the global Hopf bifurcation of the DDE model \eqref{modelH2} in Fuller’s space. This connection between ODE periodic orbits and DDE Hopf components appears to be new.
\item Introducing a positive delay $\tau$ can lead to predator-prey coexistence through oscillatory dynamics at lower carrying capacities compared to the delay-free case, a counterintuitive phenomenon given the mortality losses incurred during the delay.
\end{itemize}

The novelty of the second result lies in establishing a precise relationship between ODE limit cycles and global Hopf components in delay differential equations, which to the best of our knowledge has not been reported in the context of Holling-type functional responses. The third result offers new biological insight by demonstrating that delay can in fact facilitate coexistence under more restrictive environmental conditions.

The paper is organized as follows. In Section~2, we establish the well-posedness of \eqref{modelH2} with prescribed initial data and recall the known dynamics of the corresponding ODE model. Section~3 analyzes local dynamics near equilibria and the onset of Hopf bifurcations. Section~4 develops the theory of global Hopf bifurcation and connected components. In Section~5, we present bifurcation diagrams in the $(\tau,K)$-plane, highlighting the joint effects of delay and carrying capacity. Section~6 concludes with biological interpretation and significance of the results.

\section{Preliminaries}

\subsection{Existence, uniqueness, and boundedness of solutions}

Let $\mathbf{C}=C([-\tau,0],\mathbb{R}_+)$ be a set of continuous functions mapping $[-\tau,0]$ to $\mathbb{R}_+$ equipped with the supremum norm, where $\mathbb{R}_+$ is the set of positive real numbers. Consider the initial data $\phi=(\phi_1(t),\phi_2(t))\in\mathbf{C\times C}$  for system \eqref{modelH2}.
The right-hand side of system \eqref{modelH2} is continuously differentiable with respect to the arguments $x(t)$, $y(t)$, $x(t-\tau)$, and $y(t-\tau)$.  By the existence and uniqueness theorem \cite{Hale1993, Kuang1993}, there exists a unique solution $(x_1(t), x_2(t))$ for $t$ in a small neighborhood of $t=0$, and satisfies the initial condition $(x_1(t), x_2(t))=(\phi_1(t),\phi_2(t))$ for $t\in [-\tau, 0]$. Moreover, the solution is continuously differentiable.

\begin{lemma}\label{positivity}
$x(t)$ and $y(t)$ are positive for $t>0$ given the initial condition $\phi\in\mathbf{C\times C}$.
\end{lemma}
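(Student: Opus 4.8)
The plan is to treat the two components separately: positivity of $x$ follows from the Kolmogorov (multiplicative) structure of the first equation, while positivity of $y$ follows from the method of steps applied to the second equation, whose delayed term acts as a nonnegative forcing. Throughout, let $[0,T_{\max})$ be the maximal interval of existence of the solution $(x,y)$ with history $\phi$; by the existence--uniqueness theorem quoted above it is continuous (indeed $C^1$) there, and I will prove positivity on $[0,T_{\max})$, which in particular yields positivity for all $t>0$.

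For $x$, I would rewrite the first equation of \eqref{modelH2} as $x'(t)=x(t)F(t)$ with $F(t)=r\bigl(1-x(t)/K\bigr)-m y(t)/(a+x(t))$, and argue by first passage to zero: if $x$ had a first zero at $t_0\in(0,T_{\max})$, then on the compact interval $[0,t_0]$ the functions $x,y$ are continuous, hence bounded, and $a+x\ge a>0$, so $F$ is integrable there; the representation $x(t)=\phi_1(0)\exp\bigl(\int_0^t F(s)\,ds\bigr)$ on $[0,t_0)$ then forces $x(t_0)>0$ by continuity (since $\phi_1(0)>0$), a contradiction. Note this step uses no sign information about $y$.

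For $y$, I would induct over the intervals $I_n=[n\tau,(n+1)\tau]$, $n\ge 0$. On each $I_n$ the delayed arguments $x(\cdot-\tau),y(\cdot-\tau)$ range over $I_{n-1}$ (the history interval when $n=0$), so the second equation reads $y'(t)=-d\,y(t)+g_n(t)$ with the known continuous forcing $g_n(t)=e^{-d\tau}cm\,x(t-\tau)y(t-\tau)/(a+x(t-\tau))$. Using the inductive hypothesis that $x,y>0$ on $[0,n\tau]$ (with Step 1 already supplying $x>0$ regardless), one gets $g_n\ge 0$, so the variation-of-constants formula gives $y(t)\ge e^{-d(t-n\tau)}y(n\tau)>0$ on $I_n$; the base case $n=0$ uses $g_0\ge 0$ because $\phi_1,\phi_2\ge 0$, together with $y(0)=\phi_2(0)>0$. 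This closes the induction and yields $x,y>0$ on all of $[0,T_{\max})$.

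I do not expect a genuine obstacle here — the argument is routine for delayed Kolmogorov-type systems. The two places needing a little care are: (i) checking that $F$ stays integrable up to the hypothetical first zero of $x$ in Step 1, which is handled by continuity of the solution on compact subintervals of $[0,T_{\max})$ and by $a>0$ keeping the denominator away from $0$; and (ii) the bookkeeping of the method of steps in Step 2, namely that at stage $n$ the delayed terms feeding the $y$-equation involve only values already shown to be nonnegative, which is automatic because each step extends the interval of established positivity by exactly $\tau$.
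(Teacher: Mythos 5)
Your proof is correct. For the $x$-component you use the same device as the paper --- the exponential (variation-of-constants) representation $x(t)=x(0)\exp\bigl(\int_0^t F(s)\,ds\bigr)$ --- and your extra first-passage framing is harmless but not needed: once the representation is written down on the interval of existence, positivity of $x$ is immediate from $x(0)=\phi_1(0)>0$. For the $y$-component your route genuinely differs from the paper's. The paper argues by contradiction at a hypothetical first zero $t_0$ of $y$: there one would need $y'(t_0)\le 0$, yet the second equation evaluated at $t_0$ gives $y'(t_0)=e^{-d\tau}cmx(t_0-\tau)y(t_0-\tau)/(a+x(t_0-\tau))>0$, because the delayed arguments lie either in the (positive) history or in $[0,t_0)$ where $y$ is still positive. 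You instead run the method of steps, viewing the delayed term as a known nonnegative forcing $g_n$ on each $[n\tau,(n+1)\tau]$ and concluding $y(t)\ge e^{-d(t-n\tau)}y(n\tau)>0$ from the differential inequality $y'\ge -dy$. Both arguments are valid; the paper's is shorter, while yours makes the role of the nonnegative history fully explicit and, after chaining the steps, delivers the stronger quantitative bound $y(t)\ge e^{-dt}y(0)$ rather than bare positivity. The only bookkeeping you should make explicit is that the induction is carried out on $[n\tau,(n+1)\tau]\cap[0,T_{\max})$, since the maximal interval may end inside one of the steps; this is a one-line adjustment and does not affect the argument.
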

\begin{proof} It follows from the first equation of system \eqref{modelH2} and the formula of variation of constants that
$$x(t)=x(0)e^{ \int_{0}^{t} r(1-\frac{x(s)}{K})-\frac{my(s)}{a+x(s)}\,ds }.$$
Since $x(0)>0$, then $x(t)>0$ for $t>0$ on the interval of existence.
For $y(t)$, since $y(0)>0$ and $y(t)$ is continuous, then either $y(t)>0$ for $t>0$ on the interval of existence, or  $\exists$ $t_0>0,$ such that $y(t_0)=0$, but $y'(t_0)\leq 0$. However,  by substituting $t=t_0$ into the second equation of system \eqref{modelH2}, we obtain
$$
y'(t_0)=e^{-d\tau}\frac{cmx(t_0-\tau)y(t_0-\tau)}{a+x(t_0-\tau)}>0,
$$
Which is a contradiction. Hence, $y(t)$ is positive for $t>0$.
\end{proof}

\begin{lemma}\label{boundedness}
       The solution $(x(t), y(t))$ is eventually uniformly bounded from above provided that the initial condition $\phi\in\mathbf{C\times C}$.
\end{lemma}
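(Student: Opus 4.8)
The plan is to construct a comparison/absorbing-set argument, handling the prey equation first and then feeding its bound into the predator equation. First I would bound $x(t)$: from the first equation of \eqref{modelH2} we have $x'(t)\le rx(t)(1-x(t)/K)$, so by a standard differential inequality (comparison with the logistic ODE), $\limsup_{t\to\infty}x(t)\le K$, and in fact for any $\varepsilon>0$ there is $T_1$ with $x(t)\le K+\varepsilon$ for all $t\ge T_1$. This gives the eventual uniform bound on the first component and, crucially, a bound on the delayed term $x(t-\tau)/(a+x(t-\tau))\le 1$ together with $x(t-\tau)\le K+\varepsilon$ for $t\ge T_1+\tau$.

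The second step is the standard trick for this class of systems: rather than bounding $y$ directly (the $y$-equation alone has no self-limiting term), I would introduce the auxiliary function
\[
W(t)=c\,e^{-d\tau}x(t-\tau)+y(t).
\]
Differentiating and using both equations of \eqref{modelH2}, the Holling terms cancel, leaving
\[
W'(t)=c\,e^{-d\tau}\!\left[rx(t-\tau)\!\left(1-\frac{x(t-\tau)}{K}\right)\right]-dy(t).
\]
Add $dW(t)$ to both sides: $W'(t)+dW(t)=c\,e^{-d\tau}\big[rx(t-\tau)(1-x(t-\tau)/K)+d\,x(t-\tau)\big]$. Since $x(t-\tau)$ is eventually bounded (by $K+\varepsilon$) from Step 1, the quadratic in brackets is eventually bounded above by some constant $M_0>0$ depending only on the parameters; hence $W'(t)+dW(t)\le c\,e^{-d\tau}M_0=:M$ for $t\ge T_2$. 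Grönwall's inequality then yields $\limsup_{t\to\infty}W(t)\le M/d$, and since $y(t)=W(t)-c\,e^{-d\tau}x(t-\tau)\le W(t)$, we conclude $\limsup_{t\to\infty}y(t)\le M/d$, a bound independent of the initial data. Together with Step 1 this proves eventual uniform boundedness from above of $(x(t),y(t))$.

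The main obstacle is purely technical bookkeeping with the delay: the bound on $x(t-\tau)$ only holds for $t\ge T_1+\tau$, so all the estimates on $W$ must be pushed past that time, and one must be a little careful that the "eventual" bound on $x$ produced by the logistic comparison is genuinely uniform (independent of $\phi$) — which it is, since the logistic comparison equation forgets its initial condition. A secondary point worth stating explicitly is that positivity (Lemma \ref{positivity}) is used throughout: it guarantees $y(t)\le W(t)$ and that the Holling terms have the right sign so that they cancel cleanly in $W'$. No fixed-point or invariant-region machinery beyond the scalar comparison principle and Grönwall is needed.
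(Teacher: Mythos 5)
Your proposal is correct and follows essentially the same route as the paper: a logistic comparison for $x$, the auxiliary function $c\,e^{-d\tau}x(t-\tau)+y(t)$ (the paper's $U$), cancellation of the Holling terms, and a linear comparison/Gr\"onwall step. The only cosmetic difference is that the paper bounds the quadratic $(r+d)x - rx^2/K$ globally by its vertex value $\tfrac{(r+d)^2K}{4r}$ via completing the square, so it does not need to wait for the eventual bound $x(t-\tau)\le K+\varepsilon$, whereas you do; both versions are valid.
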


\begin{proof}
    We will first show that $x(t)$ is eventually uniformly bounded from above. From the first equation of system \eqref{modelH2}, by Lemma \ref{positivity},  we have
$$\frac{dx}{dt}=rx(t)(1-\frac{x(t)}{K})-\frac{mx(t)y(t)}{a+x(t)}<rx(t)(1-\frac{x(t)}{K}).$$
Let $X(t)$ be the solution to $\frac{dX}{dt}=X(t)(1-\frac{X(t)}{K})$ with $X(0)=x(0)>0$. By the Comparison Theorem,  $$\limsup_{t\rightarrow \infty}x(t)\leq \limsup_{t\rightarrow \infty}X(t)=K.$$ To show $y(t)$ is eventually uniformly bounded from above, let $U(t)=ce^{-d\tau}x(t-\tau)+y(t)$, then
\begin{eqnarray*}
    \frac{dU}{dt}&=&ce^{-d\tau}\frac{d(x(t-\tau))}{dt}+\frac{d(y(t))}{dt}\\
                 &=&ce^{-d\tau}\left((r+d)x(t-\tau)-\frac{r(x(t-\tau))^2}{K}\right)-dU(t).
\end{eqnarray*}
Notice that $$(r+d)x(t-\tau) -\frac{r(x(t-\tau))^2}{K}-\frac{(r+d)^2K}{4r}=-\left(\frac{\sqrt{r}x(t-\tau)}{\sqrt{K}}-\frac{(r+d)\sqrt{K}}{2\sqrt{r}}\right)^2\leq 0,$$
and accordingly,
$$\frac{dU}{dt}\leq ce^{-d\tau}\frac{(r+d)^2K}{4r}-dU(t).$$
Let $Y(t)$ be the solution to  $\frac{dY}{dt}=ce^{-d\tau}\frac{(r+d)^2K}{4r}-dY(t)$ with $Y(0)=U(0)$.
Then
$$Y(t)=\frac{ce^{-d\tau}(r+d)^2K}{4rd}+\left(U(0)-\frac{ce^{-d\tau}(r+d)^2K}{4rd}\right)e^{-dt}.$$
By the Comparison Theorem, $$\limsup_{t\rightarrow \infty}y(t)\leq \limsup_{t\rightarrow \infty}U(t)\leq \limsup_{t\rightarrow \infty}Y(t)=\frac{ce^{-d\tau}(r+d)^2K}{4rd}\leq cK.$$
Hence, $y(t)$ is eventually uniformly bounded from above.
\end{proof}

\begin{theorem}
  The solution $(x(t), y(t))$ of system \eqref{modelH2} exists on $[-\tau, \infty)$ for $\phi\in\mathbf{C\times C}$.
\end{theorem}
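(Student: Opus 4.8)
The plan is to show that the local solution constructed via the existence--uniqueness theorem cannot blow up in finite time, so that the maximal interval of existence is all of $[-\tau,\infty)$. By Lemma \ref{positivity}, any solution starting from $\phi\in\mathbf{C\times C}$ stays in the positive cone on its interval of existence, and by Lemma \ref{boundedness} it is eventually uniformly bounded from above. First I would invoke the standard continuation principle for functional differential equations (see \cite{Hale1993, Kuang1993}): if a solution exists on a maximal interval $[-\tau, T_{\max})$ with $T_{\max}<\infty$, then the solution must leave every compact subset of the phase space $\mathbf{C}\times\mathbf{C}$ as $t\to T_{\max}^-$, which for this system means $\limsup_{t\to T_{\max}^-}\big(x(t)+y(t)\big)=+\infty$.

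Next I would rule this out by a finite-time a priori bound. The subtlety is that Lemma \ref{boundedness} only gives an \emph{eventual} bound (a $\limsup$ as $t\to\infty$), not a bound on $[0,T]$ for arbitrary finite $T$; so I would re-run the comparison arguments from the proof of Lemma \ref{boundedness} keeping track of the transient terms. For $x(t)$: from $x'(t)\le rx(t)\big(1-\frac{x(t)}{K}\big)$ and the comparison theorem, $x(t)\le X(t)$ where $X$ solves the logistic equation with $X(0)=x(0)>0$; since $X(t)$ is a smooth bounded function on $[0,\infty)$ (explicitly $X(t)=\dfrac{KX(0)e^{rt}}{K+X(0)(e^{rt}-1)}$, bounded by $\max\{K,X(0)\}$), we get $x(t)\le \max\{K, x(0)\}$ for all $t\ge 0$. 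For $y(t)$: with $U(t)=ce^{-d\tau}x(t-\tau)+y(t)$, the same computation gives $U'(t)\le ce^{-d\tau}\frac{(r+d)^2K}{4r}-dU(t)$, hence $U(t)\le \max\{U(0), \frac{ce^{-d\tau}(r+d)^2K}{4rd}\}$ for all $t$ in the interval of existence by comparison with the linear ODE $Y$, and since $x(t-\tau)\ge 0$ this yields an explicit finite bound on $y(t)$ on every $[0,T]$. Note $U(0)=ce^{-d\tau}x(-\tau)+y(0)=ce^{-d\tau}\phi_1(-\tau)+\phi_2(0)$ is finite because $\phi\in\mathbf{C}\times\mathbf{C}$.

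Combining these, $x(t)+y(t)$ is bounded by a constant depending only on $\phi$ and the parameters, uniformly on the whole interval of existence; in particular it stays in a fixed compact subset of $\mathbf{C}\times\mathbf{C}$ and cannot escape to infinity in finite time. This contradicts the continuation principle unless $T_{\max}=\infty$, which proves the theorem. The main obstacle, and the only place real care is needed, is the gap between the eventual bound of Lemma \ref{boundedness} and the finite-time bound required here: one must redo the comparison estimates retaining the initial-condition-dependent terms rather than discarding them in the $\limsup$. Everything else is a routine application of the standard FDE continuation theorem.
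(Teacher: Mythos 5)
Your proof takes the same route as the paper: positivity (Lemma \ref{positivity}), an upper bound obtained from the comparison arguments of Lemma \ref{boundedness}, and the standard continuation theorem for functional differential equations. In fact your version is more careful than the paper's two-line proof, which invokes the eventual ($\limsup$) bound of Lemma \ref{boundedness} directly even though that statement, read literally, presupposes existence for all time; you correctly observe that the continuation argument needs an a priori bound valid on every finite subinterval of the maximal existence interval, and you supply it by retaining the initial-data-dependent transient terms in the comparison estimates (e.g.\ $x(t)\leq \max\{K, x(0)\}$ and $U(t)\leq \max\{U(0), \tfrac{ce^{-d\tau}(r+d)^2K}{4rd}\}$), which closes that small gap.
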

\begin{proof}
By Lemma \ref{positivity} and Lemma \ref{boundedness}, the solution $(x(t), y(t))$ is eventually uniformly bounded. By the Continuation Theorem \cite{Hale1993, Kuang1993}, the solution exists for all $t>0$.
\end{proof}


\subsection{Dynamics of the classical model and equilibrium points of system \texorpdfstring{\eqref{modelH2}}{}}

To fully understand the dynamics of the delayed model \eqref{modelH2}, we recall the dynamics of the classical model below, i.e., the corresponding ODE system of model \eqref{modelH2}, which was studied by Jing and Chen in the 1980s \cite{Jing1984}.

\begin{equation}\label{ODE model}
    \begin{cases}
    \displaystyle \frac{dx(t)}{dt}=rx(1-\frac{x}{K})-\frac{mxy}{a+x},\\
    \displaystyle \frac{dy(t)}{dt}=y(-d+\frac{cmx}{a+x}).
\end{cases}
\end{equation}

\begin{lemma}\label{ODEsystemdynamics}
Let $K_c=\frac{ad}{cm-d}$, and $K_0=\frac{a(cm+d)}{cm-d}$.
System \eqref{ODE model} has three equilibrium points $E_0(0,0)$,  $E_K(K,0)$ and  $E^*(\frac{ad}{cm-d}, \frac{rca(Kcm-Kd-ad)}{K(cm-d)^2})$, where $E_0$ is always a saddle.
\begin{itemize}

 \item $E_K$ is globally asymptotically stable for $K<K_c$ and becomes unstable for $K>K_c$.

 As $K$ increases and crosses the threshold $K=K_c$, a transcritical bifurcation takes place between $E_K$ and $E^\ast$. $E_K$ loses stability and $E^\ast$ enters the first quadrant.

  \item   $K=K_0$ is the Hopf bifurcation threshold for $E^*$. $E^\ast$ is globally asymptotically stable for $K_c<K<K_0$ and unstable for $K>K_0$.

  \item  System \eqref{ODE model} has no periodic solution for $K\leq K_0$, and a unique periodic solution for $K>K_0$.

\end{itemize}
\end{lemma}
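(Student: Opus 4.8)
The plan is to prove Lemma~\ref{ODEsystemdynamics} by combining standard phase-plane analysis with classical results on Gause-type predator-prey systems. First I would locate the equilibria: setting the right-hand sides of \eqref{ODE model} to zero, the prey nullclines are $x=0$ and $y=\frac{r}{m}(a+x)(1-x/K)$, while the predator nullclines are $y=0$ and $x=\frac{ad}{cm-d}$. Intersecting these gives $E_0$, $E_K$, and $E^\ast$, and a direct computation shows $E^\ast$ has positive coordinates precisely when $K>K_c=\frac{ad}{cm-d}$. A Jacobian computation at $E_0$ shows eigenvalues $r>0$ and $-d<0$, so $E_0$ is always a saddle; at $E_K$ the eigenvalues are $-r<0$ and $-d+\frac{cmK}{a+K}$, and the latter changes sign exactly at $K=K_c$, which gives the stability switch and, together with the emergence of $E^\ast$ into the open first quadrant, the transcritical bifurcation.

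Next I would handle the local stability of $E^\ast$. Because the predator isocline $x=\frac{ad}{cm-d}$ is vertical, the Jacobian at $E^\ast$ has trace equal to the $x$-component of the prey vector field's $\partial_x$ derivative evaluated on this isocline, and a short calculation shows $\mathrm{tr}\,J(E^\ast)$ has the same sign as $K-K_0$ where $K_0=\frac{a(cm+d)}{cm-d}$; the determinant is positive throughout $K>K_c$. Hence $E^\ast$ is locally asymptotically stable for $K_c<K<K_0$ and unstable (an unstable focus/node) for $K>K_0$, with a Hopf bifurcation at $K=K_0$. For global asymptotic stability of $E_K$ when $K<K_c$ and of $E^\ast$ when $K_c<K<K_0$, I would invoke a Lyapunov function of Volterra type, e.g. $V(x,y)=\int_{x^\ast}^{x}\frac{c\xi-ax^\ast/(?)\,}{\xi}\,d\xi+\dots$ — more cleanly, the standard construction $V=\int_{x^\ast}^x \frac{p(\xi)-p(x^\ast)}{p(\xi)}\,d\xi + \frac{1}{c}\int_{y^\ast}^y \frac{\eta-y^\ast}{\eta}\,d\eta$ with $p(x)=\frac{mx}{a+x}$ — whose derivative along trajectories is a multiple of $\big(x\alpha(x)/p(x)\big)$ evaluated between $x$ and $x^\ast$; monotonicity of $\frac{x\alpha(x)}{p(x)} = \frac{r(a+x)(1-x/K)}{m}$ in the relevant range forces $\dot V\le 0$, and LaSalle's invariance principle closes the argument, after first using Lemma~\ref{boundedness}-type bounds to confine trajectories to a compact positively invariant region.

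For the last bullet — no periodic orbit for $K\le K_0$ and a unique one for $K>K_0$ — the non-existence part for $K<K_0$ follows from global stability of the relevant equilibrium (and for $K=K_0$ from a center-manifold/normal-form computation showing the Hopf bifurcation is supercritical, or alternatively from a Bendixson--Dulac argument with multiplier $\frac{a+x}{xy}$). For $K>K_0$: existence of at least one periodic orbit follows from the Poincar\'e--Bendixson theorem, since $E^\ast$ is a repeller, $E_0$ and $E_K$ are saddles with stable manifolds on the axes, and trajectories are bounded and remain in the open first quadrant; uniqueness follows from the classical uniqueness criteria for limit cycles of Gause-type systems (Kuang--Freedman, or Cheng's theorem for Holling type II), which apply once one verifies the required monotonicity of $\frac{d}{dx}\!\left(\frac{x\alpha(x)-\lambda}{p(x)}\right)$-type expressions. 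The main obstacle is the uniqueness of the limit cycle: this is the only part that is not a routine phase-plane or Lyapunov computation, and I would discharge it by citing Jing and Chen \cite{Jing1984} (and, as a modern reference, the Kuang--Freedman uniqueness theorem), verifying that the hypotheses of that theorem hold for the specific functions $\alpha(x)=r(1-x/K)$ and $p(x)=\frac{mx}{a+x}$.
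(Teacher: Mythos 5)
You should first note that the paper does not prove this lemma at all: it is explicitly \emph{recalled} from Jing and Chen \cite{Jing1984}, so there is no in-paper argument to compare against. Your reconstruction (nullcline intersection, Jacobian signs at $E_0$ and $E_K$, trace/determinant at $E^\ast$ with the Hopf threshold $x^\ast=\frac{K-a}{2}$ equivalent to $K=K_0$, Poincar\'e--Bendixson for existence, and citation of Jing--Chen / Kuang--Freedman for uniqueness of the cycle) is the standard route and correctly isolates the uniqueness of the limit cycle as the only non-routine step, which you discharge by the same citation the paper uses. That part is fine.

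There is, however, one genuine gap: the Volterra-type Lyapunov function does not give global asymptotic stability of $E^\ast$ on the whole interval $K_c<K<K_0$. With $V=c\int_{x^\ast}^x\frac{p(\xi)-p(x^\ast)}{p(\xi)}\,d\xi+\int_{y^\ast}^y\frac{\eta-y^\ast}{\eta}\,d\eta$ one computes $\dot V=c\,(p(x)-p(x^\ast))\,(h(x)-h(x^\ast))$ with $h(x)=\frac{x\alpha(x)}{p(x)}=\frac{r}{m}(a+x)(1-x/K)$, and since $p$ is increasing, $\dot V\le 0$ requires $h(x)>h(x^\ast)$ for all $0<x<x^\ast$. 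Because $h$ is a downward parabola peaking at $\frac{K-a}{2}$, this holds only when $x^\ast\ge K-a$, i.e. $K\le \frac{acm}{cm-d}$, which is strictly smaller than $K_0=\frac{a(cm+d)}{cm-d}$. Your claimed ``monotonicity of $h$ in the relevant range'' fails precisely on the remaining window $\frac{acm}{cm-d}<K<K_0$ (and the plain Dulac multiplier $\frac{a+x}{xy}$ has the same defect: its weighted divergence changes sign at $x=\frac{K-a}{2}$). The standard fix is to reverse your logical order: first establish non-existence of closed orbits for all $K\le K_0$ (this is exactly what the Jing--Chen / Kuang--Freedman / Cheng--Hsu--Lin machinery delivers, alongside uniqueness for $K>K_0$), and then obtain global asymptotic stability of $E^\ast$ from boundedness, local stability, the absence of periodic orbits, and the fact that $E_0$, $E_K$ are boundary saddles admitting no heteroclinic cycle, via Poincar\'e--Bendixson. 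As written, your Lyapunov step proves a strictly weaker statement than the lemma asserts.
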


For system (\ref{modelH2}), it is easy to see that the equilibrium points $E_0(0, 0)$ and $E_K(K, 0)$ exist for all $\tau>0$. To explore the existence and stability of interior equilibrium points, we define the following parameters for convenience: $$K_1=\frac{ad}{cme^{-d\tau}-d}, K_2=\frac{ad(3cm+d)}{c^2m^2-d^2}, \tau_{max}=\frac{1}{d} \ln\left(\frac{Kcm}{ad+dK}\right).$$ Then $K_c<K_2<K_0$ and $K_c<K_1$. We introduce the condition below.

\begin{itemize}
    \item Condition $C_0: cm>d$,  and $K>K_1$ (i.e., $\tau< \tau_{max}$).



    \end{itemize}

Through straightforward calculations, we derive the following lemma.

\begin{lemma}\label{lemmaE}
The interior equilibrium $E^*(\frac{da}{ce^{-d\tau}m-d},\frac{rce^{-d\tau}a(Kce^{-d\tau}m-Kd-da)}{K(ce^{-d\tau}m-d)^2})$ of system \eqref{modelH2} exists under the condition $C_0$.
\end{lemma}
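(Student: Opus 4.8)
The plan is to verify directly that the given point $E^*$ is an equilibrium of \eqref{modelH2} and that its coordinates are strictly positive exactly under Condition $C_0$. First I would set the right-hand side of \eqref{modelH2} equal to zero. Any interior equilibrium is a constant solution, so $x(t-\tau)=x(t)=x^*$ and $y(t-\tau)=y(t)=y^*$; substituting into the second equation gives $-d\,y^* + e^{-d\tau}\,\dfrac{cm\,x^*\,y^*}{a+x^*}=0$, and since $y^*\neq 0$ at an interior equilibrium this reduces to $\dfrac{cm e^{-d\tau}\,x^*}{a+x^*}=d$, which solves to $x^*=\dfrac{da}{cme^{-d\tau}-d}$. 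Note this step already requires $cme^{-d\tau}-d>0$ for the denominator to make sense (and be positive); I would record that this is equivalent to $\tau<\tau_{\max}$ only after pinning down $x^*$, or rather that $cme^{-d\tau}>d$ is the relevant inequality here, with the sharper requirement $K>K_1$ coming from the positivity of $y^*$ below.

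Next I would substitute $x^*$ into the first equation at equilibrium: $r x^*\bigl(1-\tfrac{x^*}{K}\bigr)-\dfrac{m x^* y^*}{a+x^*}=0$, and since $x^*\neq 0$ divide through to get $y^*=\dfrac{r(a+x^*)}{m}\bigl(1-\tfrac{x^*}{K}\bigr)$. Then I would plug in $x^*=\dfrac{da}{cme^{-d\tau}-d}$ and simplify. Using $a+x^* = \dfrac{a\,cme^{-d\tau}}{cme^{-d\tau}-d}$ and $1-\tfrac{x^*}{K}=\dfrac{K(cme^{-d\tau}-d)-da}{K(cme^{-d\tau}-d)}$, the product collapses to $y^*=\dfrac{r c e^{-d\tau} a\bigl(Kce^{-d\tau}m - Kd - da\bigr)}{K(ce^{-d\tau}m-d)^2}$, matching the stated formula. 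This is just bookkeeping and I would not write out every line.

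Finally I would check that $(x^*,y^*)\in \mathbb{R}_+\times\mathbb{R}_+$ precisely under $C_0$. Given $cm>d$, one has $cme^{-d\tau}-d>0$ iff $e^{-d\tau}>d/(cm)$ iff $\tau<\tfrac1d\ln(cm/d)$; I would observe that $\tau_{\max}=\tfrac1d\ln\bigl(\tfrac{Kcm}{ad+dK}\bigr)<\tfrac1d\ln(cm/d)$, so the constraint $\tau<\tau_{\max}$ from $C_0$ is the binding one and in particular guarantees $x^*>0$. For $y^*>0$: the prefactor $rce^{-d\tau}a/(K(ce^{-d\tau}m-d)^2)$ is already positive, so $y^*>0$ iff $Kce^{-d\tau}m-Kd-da>0$, i.e. $K(cme^{-d\tau}-d)>da$, i.e. $K>\dfrac{da}{cme^{-d\tau}-d}=K_1$; and $K>K_1$ is exactly the other half of $C_0$ (equivalently $\tau<\tau_{\max}$, as the paper notes). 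Conversely, if either part of $C_0$ fails, one of $x^*,y^*$ is non-positive and $E^*$ is not an interior equilibrium, giving the "only if" direction. There is no real obstacle here — the only mild subtlety is confirming the two formulations of $C_0$ (the inequality $K>K_1$ versus $\tau<\tau_{\max}$) are genuinely equivalent and that $\tau_{\max}$ as defined does not exceed the threshold $\tfrac1d\ln(cm/d)$ needed for $x^*$ itself; I would dispatch that by the elementary inequality $\tfrac{Kcm}{ad+dK}<\tfrac{cm}{d}$, which holds for all $K>0$.
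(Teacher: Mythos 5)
Your proof is correct and is exactly the ``straightforward calculation'' the paper alludes to without writing out: solve the second equilibrium equation for $x^*$, back-substitute into the first to get $y^*$, and check that positivity of both coordinates is equivalent to Condition $C_0$ (using $\tfrac{Kcm}{d(a+K)}<\tfrac{cm}{d}$ so that $\tau<\tau_{\max}$ already forces $cme^{-d\tau}>d$). Nothing is missing.
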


Consider the stability of equilibrium points. Let $(x_0, y_0)$ be any equilibrium point of system \eqref{modelH2} The characteristic matrix at $(x_0, y_0)$ is given by
$$ J(x_0,y_0)=\begin{bmatrix}
\bigg(r-\frac{2rx_0}{K}-\frac{may_0}{(a+x_0)^2}\bigg) -\lambda& \frac{-mx_0}{a+x_0} \\
e^{-\lambda \tau}\frac{ce^{-d\tau}may_0}{(a+x_0)^2} & \bigg(-d+e^{-\lambda \tau}\frac{ce^{-d\tau}mx_0}{(a+x_0))}\bigg)-\lambda
\end{bmatrix}.$$

By substituting $x_0$ and $y_0$ with 0 in $J(x_0,y_0)$, we obtain
two simple eigenvalues of $J(0, 0)$:  $\lambda_1=r>0$ and $\lambda_2=-d<0$. Applying the Hartman-Grobman Theorem \cite{Hartman}, we establish the following lemma.
\begin{lemma}\label{origin}
Equilibrium $E_0$ is a saddle point for all $\tau\geq 0$.
\end{lemma}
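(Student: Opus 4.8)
The plan is to read the linearization of \eqref{modelH2} at $E_0$ straight off the characteristic matrix $J(x_0,y_0)$ displayed above. Setting $x_0=y_0=0$, every entry that carries the delay factor $e^{-\lambda\tau}$ is proportional to $x_0$ or to $y_0$ and therefore vanishes, so $J(0,0)$ becomes triangular with diagonal entries $r-\lambda$ and $-d-\lambda$. Hence the characteristic equation at $E_0$ degenerates to the genuine polynomial $(\lambda-r)(\lambda+d)=0$, which is independent of $\tau$ and has exactly the two simple roots $\lambda_1=r>0$ and $\lambda_2=-d<0$ and no others. In particular no root lies on the imaginary axis, so $E_0$ is hyperbolic for every $\tau\ge 0$, with one unstable root and one stable root.

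Next I would invoke the local invariant-manifold theory for retarded functional differential equations (the stable/unstable manifold theorem together with the Hartman--Grobman-type result in Hale and Verduyn Lunel, or in Kuang \cite{Kuang1993}): since the infinitesimal generator of the linearized semiflow at $E_0$ has exactly one eigenvalue (namely $r$) in the open right half-plane and the rest of its spectrum (here just $-d$) in the open left half-plane, $E_0$ admits a one-dimensional local unstable manifold $W^u_{\mathrm{loc}}(E_0)$ and a codimension-one local stable manifold $W^s_{\mathrm{loc}}(E_0)$ in $\mathbf{C\times C}$, on which the semiflow is topologically conjugate to the linear one. Because $W^u_{\mathrm{loc}}(E_0)$ is nontrivial, trajectories starting near $E_0$ but off the stable manifold are repelled from it, which is precisely the assertion that $E_0$ is a saddle; and since neither the characteristic equation nor the spectral splitting depends on $\tau$, the conclusion holds uniformly in $\tau\ge 0$ (and reduces, at $\tau=0$, to the ODE statement in Lemma \ref{ODEsystemdynamics}).

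There is essentially no computational obstacle. The only point requiring care is conceptual: interpreting ``saddle'' in the infinite-dimensional phase space $\mathbf{C\times C}$ and checking the hyperbolicity hypothesis of the FDE invariant-manifold theorem. Both are settled by the single observation that the delay terms drop out of the linearization at the origin, so the relevant spectral picture is identical to that of the planar ODE \eqref{ODE model} at $E_0$ — a standard hyperbolic saddle with a one-dimensional unstable manifold tangent to the $x$-axis and a stable manifold tangent to the $y$-axis. If one prefers to avoid the manifold machinery entirely, the same conclusion follows from the principle of linearized instability for FDEs applied to the root $\lambda_1=r>0$, which already forces $E_0$ to be unstable, combined with the stable root $\lambda_2=-d<0$ producing the complementary attracting directions.
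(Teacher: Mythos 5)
Your argument is correct and follows essentially the same route as the paper: substituting $x_0=y_0=0$ into $J(x_0,y_0)$ kills the delay terms, yielding the two simple eigenvalues $\lambda_1=r>0$ and $\lambda_2=-d<0$ independently of $\tau$, and the saddle structure then follows from the Hartman--Grobman-type theory. The additional discussion of the infinite-dimensional stable/unstable manifolds is a correct elaboration of what the paper leaves implicit.
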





\section{Local dynamics near equilibrium points}

\subsection{Local stability of \texorpdfstring{$E^*$}{}}

Substituting the coordinates of $E^*$ into $J(x_0, y_0)$, we obtain
$$
 J(E^\ast)
 =
\begin{bmatrix}
\frac{rd(K(ce^{-d\tau}m-d)-a(ce^{-d\tau}m+d))}{Kce^{-d\tau}m(ce^{-d\tau}m-d)} -\lambda & \frac{-d}{ce^{-d\tau}}\\
e^{-\lambda\tau}\frac{r(Kce^{-d\tau}m-Kd-da)}{Km} & -d+de^{-\lambda \tau}-\lambda
\end{bmatrix}.$$\\
The characteristic equation is derived by setting the determinant of $J(E^\ast)$ to zero, i.e.,
\begin{equation}\label{characteristics equation E^* Holling type 2}
    P(\lambda,\tau):=\lambda^2+[d-H(\tau)]\lambda+e^{-\lambda\tau}[-d\lambda+L(\tau)]-dH(\tau)=0,
\end{equation}
where $H(\tau)$ and $L(\tau)$ are defined as follows
$$H(\tau)=r\bigg[\frac{de^{d\tau}}{cm}-\frac{ad}{K(ce^{-d\tau}m-d)}-\frac{ad^2e^{d\tau}}{(ce^{-d\tau}m-d)Kcm}\bigg],$$ $$L(\tau)=dH(\tau)+A(\tau), \ \text{and}\ \  A(\tau)=rd\left(\frac{Kce^{-d\tau}m-Kd-da}{Kmce^{-d\tau}}\right),$$
with $\tau\in[0, \tau_{max})$.  Under condition $C_0$, $A(\tau)$ is strictly positive, and the functions $H(\tau)$, $L(\tau)$, and $A(\tau)$ are well defined and bounded. For simplicity, we will drop the $(\tau)$ and express $H(\tau)$, $L(\tau)$ and $A(\tau)$ as $H, L$, and $A$ when necessary.

\begin{lemma}\label{roots}
The sum of the multiplicity of roots of $P(\lambda, \tau)=0$  on $\mathbb{C}^+=\{\lambda\in \mathbb{C}: Re(\lambda)>0\}$ can change only if 
a pair of conjugate complex roots appear on or cross the imaginary axis as $\tau$ varies continuously on $[0, \tau_{max})$.

\end{lemma}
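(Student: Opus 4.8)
The plan is to argue by continuity of the roots of $P(\lambda,\tau)=0$ in $\tau$ together with a bound that keeps roots away from infinity near the imaginary axis. First I would observe that $P(\lambda,\tau)$ is an entire function of $\lambda$ for each fixed $\tau\in[0,\tau_{max})$, and that the coefficients — built from $d$, $H(\tau)$, $L(\tau)$, and the exponential factor $e^{-\lambda\tau}$ — depend continuously (indeed analytically) on $\tau$ on this interval, since by the discussion following \eqref{characteristics equation E^* Holling type 2} the functions $H$, $L$, $A$ are well defined and bounded there under condition $C_0$. By Rouché's theorem, the number of zeros of $P(\cdot,\tau)$ (counted with multiplicity) inside any fixed bounded region of $\mathbb{C}$ whose boundary avoids the zero set varies continuously, hence is locally constant, as $\tau$ varies; consequently the total multiplicity on $\mathbb{C}^+$ can change only when a zero enters or leaves through the boundary of $\mathbb{C}^+$, i.e.\ through the imaginary axis, or escapes to or enters from $\lambda=\infty$.

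Next I would rule out the escape-to-infinity scenario by a uniform a priori bound: for $\lambda$ with $\mathrm{Re}(\lambda)\ge 0$, the factor $e^{-\lambda\tau}$ satisfies $|e^{-\lambda\tau}|\le 1$, so from \eqref{characteristics equation E^* Holling type 2},
$$|\lambda|^2 \le |d-H(\tau)|\,|\lambda| + d|\lambda| + |L(\tau)| + d|H(\tau)|,$$
and since $H$ and $L$ are bounded uniformly on $[0,\tau_{max})$, there is a constant $R>0$ (independent of $\tau$) such that every root of $P(\cdot,\tau)=0$ in the closed right half-plane satisfies $|\lambda|\le R$. Thus no root can travel to infinity while staying in $\overline{\mathbb{C}^+}$, and any change in the count on $\mathbb{C}^+$ must be realized by a root crossing the imaginary axis through a bounded point. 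Finally, since the coefficients of $P$ are real, roots come in complex-conjugate pairs, so a nonzero purely imaginary root $\lambda = i\omega$ is always accompanied by its conjugate $-i\omega$; I would also dispatch the case $\lambda=0$ by checking that $P(0,\tau)=-2dH(\tau)+L(\tau)=A(\tau)-dH(\tau)$... more simply, $P(0,\tau) = L(\tau) - 2dH(\tau)$, and one verifies this is nonzero under $C_0$ (it equals $A(\tau)-dH(\tau)$, which one can check does not vanish), so $\lambda=0$ is never a root and crossings occur only through conjugate pairs of nonzero imaginary roots.

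The main obstacle is making the continuity-of-roots argument airtight in the presence of the transcendental term: one must be careful that the contour used in Rouché's theorem can be chosen, uniformly in $\tau$ on compact subintervals, to avoid the (discrete) zero set, and that multiplicities are tracked correctly when roots collide. This is handled by the standard device of covering $[0,\tau_{max})$ by overlapping compact intervals and, on each, choosing a large circle of radius $R+1$ together with a small exclusion near any imaginary-axis zero; away from such zeros the count on $\mathbb{C}^+$ is locally constant by Rouché, and the only way it can jump is at parameter values where a zero sits on the imaginary axis. The a priori bound above is what prevents the pathology of roots "appearing from infinity," which is the subtlety specific to delay equations as opposed to polynomials.
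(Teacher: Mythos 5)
Your approach is essentially the paper's: the paper also splits off the $\lambda^2$ term, checks that $\limsup_{\operatorname{Re}\lambda>0,\,|\lambda|\to\infty}|\lambda^{-2}\eta(\lambda,\tau)|=0<1$ to exclude roots coming from infinity, verifies $P(0,\tau)\neq 0$, and then invokes Theorem~1.4 of Chapter~3 in \cite{Kuang1993}, which is precisely the Rouch\'e/continuity-of-roots argument you spell out by hand. The one genuine flaw is your evaluation of $P(0,\tau)$: from \eqref{characteristics equation E^* Holling type 2}, $P(0,\tau)=L(\tau)-dH(\tau)$, not $L(\tau)-2dH(\tau)$, and since $L(\tau)=dH(\tau)+A(\tau)$ this equals $A(\tau)$, which is strictly positive under condition $C_0$ on $[0,\tau_{max})$. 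Your version leaves you needing to show that $A(\tau)-dH(\tau)$ never vanishes, which you assert without proof and which is not obvious (indeed $H(\tau)$ changes sign at $\breve{\tau}$ when $K>K_0$, so the sign of $A-dH$ is not immediate); the correct computation removes this difficulty entirely. With that arithmetic repaired, your argument is sound and, apart from re-proving the cited theorem rather than quoting it, coincides with the paper's.
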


\begin{proof}
To show that no eigenvalue emerges from infinity, we write $P(\lambda,\tau)= \lambda^2 +\eta(\lambda,\tau)$, where $\eta(\lambda,\tau)=[d-H(\tau)]\lambda+e^{-\lambda\tau}[-d\lambda+L(\tau)]-dH(\tau)$. Since $H(\tau)$ and $L(\tau)$ are bounded on $[0, \tau_{max})$, we have
$$\limsup_{Re\lambda>0, |\lambda| \to \infty }\big|\lambda^{-2} \eta(\lambda,\tau)\big|\leq \limsup_{Re\lambda>0, |\lambda| \to \infty }\left(\frac{2d+|H(\tau)|}{|\lambda|}+\frac{|L(\tau)|+d|H(\tau)|}{|\lambda|^2}\right)=0<1.$$
Furthermore, $P(0, \tau)=A(\tau)>0$ for $\tau\in[0, \tau_{max})$. Hence, $\lambda=0$ is not a root for $\tau\in[0, \tau_{max})$. By Theorem 1.4, Chapter 3 of \cite{Kuang1993}, the desired result is obtained.
\end{proof}

To study the stability of $E^\ast$ and the possibility of Hopf bifurcation at $E^*$ as $\tau$ varies, by Lemma \ref{roots} we assume $\lambda=\pm iw$ is a pair of purely imaginary roots of $P(\lambda, \tau)=0$ where $w>0$. This leads to
$$-w^2+ [d-H]iw+e^{-i\tau w}[-diw+L]-dH=0.$$
Using Euler's formula and equating the real and imaginary parts to zero, we obtain
\begin{equation}
\begin{cases}
    -w^2 +L\cos(\tau w)-dw\sin(\tau w)-dH=0,\\
    (d-H)w -dw\cos(\tau w)-L\sin(\tau w)=0.
\end{cases}
\end{equation}
Solving for $\cos(\tau w)$ and $\sin(\tau w)$, and using the assumption $\tau \in [0,\tau_{max})$ we obtain
\begin{equation}\label{sin and cos}
    \begin{cases}
        \sin(\tau w)=\frac{dwL-HwL-d^2wH-dw^3}{L^2+d^2w^2},\\
        \cos(\tau w)=\frac{LdH+Lw^2+(dw)^2-dHw^2}{L^2+d^2w^2}.
    \end{cases}
\end{equation}
Both $\sin(\tau w)$ and $\cos(\tau w)$ are well defined, since $L^2\geq 0$ and $d^2w^2>0$. Applying $\sin^2(\tau w)+\cos^2(\tau w)=1$ to \eqref{sin and cos}, we obtain
\begin{equation}\label{w polynomial}
(d^2w^2+L^2)(w^4+H^2w^2+d^2H^2-L^2)=0.
\end{equation}

\begin{theorem}\label{wtau theorem}
Let $K>K_2$, and define
$$\bar{\tau}=
\frac{1}{d}\ln\left(\frac{cm\sqrt{9a^2+4aK+4K^2}-3acm}{2\left(ad+dK\right)}\right).
$$
The following statements hold.
\begin{enumerate}

\item[(1)] $0<\bar{\tau}<\tau_{max}$.

\item[(2)]  $L(\tau)+dH(\tau)>0$ if $\tau\in[0, \bar{\tau})$, $L(\tau)+dH(\tau)=0$ if $ \tau=\bar{\tau}$, and $L(\tau)+dH(\tau)<0$ if $\tau\in(\bar{\tau}, \tau_{max})$.

\item [(3)] The unique solution of the hexic equation (\ref{w polynomial}) for $0\leq\tau< \bar{\tau}$ and $K>K_2$ is given by \begin{equation}  \label{wtauexpression}
w(\tau)=\sqrt{\frac{\sqrt{((H(\tau))^4 - 4d^2(H(\tau))^2 + 4(L(\tau))^2)}}{2} - \frac{(H(\tau))^2}{2}}.\end{equation}
\end{enumerate}
\end{theorem}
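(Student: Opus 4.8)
The plan is to prove the three statements in sequence, since each builds on the previous one. For part (1), I would introduce the single substitution $u = e^{d\tau} > 1$, which turns the defining relation for $\bar\tau$ into an algebraic identity: $\bar\tau$ is determined by $2(ad+dK)\,u = cm\big(\sqrt{9a^2+4aK+4K^2} - 3a\big)$, so $u_{\bar\tau} = \dfrac{cm\big(\sqrt{9a^2+4aK+4K^2}-3a\big)}{2d(a+K)}$. I need to check $1 < u_{\bar\tau} < u_{\tau_{max}}$, where $u_{\tau_{max}} = \dfrac{Kcm}{d(a+K)}$. The right inequality reduces to $\sqrt{9a^2+4aK+4K^2} - 3a < 2K$, i.e. $\sqrt{9a^2+4aK+4K^2} < 2K + 3a$, which after squaring is immediate. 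The left inequality $u_{\bar\tau} > 1$ is where the hypothesis $K > K_2$ must enter; squaring and simplifying the inequality $cm\big(\sqrt{9a^2+4aK+4K^2}-3a\big) > 2d(a+K)$ should produce precisely the threshold $K_2 = \dfrac{ad(3cm+d)}{c^2m^2-d^2}$ (and this is also where $cm > d$ is used to keep denominators positive). I expect this reverse-engineering of $K_2$ from the squared inequality to be the slightly delicate bookkeeping step, but it is routine.

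For part (2), the sign of $L(\tau) + dH(\tau)$ is the quantity that matters. Since $L = dH + A$, we have $L + dH = 2dH + A$; alternatively, from the formula $L(\tau) = dH(\tau) + A(\tau)$ it is cleaner to work directly with the explicit expression and again substitute $u = e^{d\tau}$. The claim is that $L+dH$, viewed as a function of $u$, is a strictly decreasing (or at least monotone) function that vanishes exactly at $u = u_{\bar\tau}$. I would compute $L(\tau)+dH(\tau)$ as a rational function of $u$, clear denominators (all positive under $C_0$), and show the numerator is a quadratic (or low-degree) polynomial in $u$ with a unique root in $(1, u_{\tau_{max}})$ equal to $u_{\bar\tau}$, opening downward so that it is positive for $u < u_{\bar\tau}$ and negative for $u > u_{\bar\tau}$. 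Matching this root to the closed form in part (1) closes the loop; the sign pattern then follows from the leading coefficient.

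For part (3), I start from the factored hexic equation $(d^2w^2 + L^2)(w^4 + H^2 w^2 + d^2 H^2 - L^2) = 0$. The first factor $d^2w^2 + L^2 > 0$ since $d^2w^2 > 0$, so all real roots come from the biquadratic $w^4 + H^2 w^2 + (d^2H^2 - L^2) = 0$. Treating this as a quadratic in $W = w^2$, the two roots are $W_\pm = \dfrac{-H^2 \pm \sqrt{H^4 - 4(d^2H^2 - L^2)}}{2} = \dfrac{-H^2 \pm \sqrt{H^4 - 4d^2H^2 + 4L^2}}{2}$. A positive $w$ requires $W > 0$; since $W_+ W_- = d^2H^2 - L^2$ and $W_+ + W_- = -H^2 \le 0$, the two roots cannot both be positive, so at most one admissible $W$ exists, namely $W_+$, which is positive precisely when $W_+W_- < 0$, i.e. when $L^2 > d^2H^2$, equivalently $(L - dH)(L + dH) > 0$, i.e. $A(L+dH) > 0$. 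Under $C_0$ we have $A > 0$, so this holds iff $L + dH > 0$, which by part (2) is exactly the regime $\tau \in [0,\bar\tau)$. Hence for $\tau \in [0,\bar\tau)$, $K > K_2$, the unique positive root is $w(\tau) = \sqrt{W_+}$, which is the displayed formula \eqref{wtauexpression}. I also need the discriminant $H^4 - 4d^2H^2 + 4L^2$ to be nonnegative on this range, which follows since $W_+$ is real and positive (or can be checked directly), and uniqueness of the root $\bar\tau$ of $L+dH=0$ in $(0,\tau_{max})$ from part (2) guarantees $W_+ > 0$ throughout $[0,\bar\tau)$ without further sign changes.

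The main obstacle will be part (2): establishing the monotonicity (or at least the single-sign-change structure) of $L(\tau) + dH(\tau)$ and verifying that its unique zero coincides with the $\bar\tau$ defined by the square-root formula. This is where the seemingly mysterious expression for $\bar\tau$ gets its meaning, and it requires carefully clearing the nested denominators in $H(\tau)$ and $A(\tau)$ after the substitution $u = e^{d\tau}$ and factoring the resulting polynomial numerator. Parts (1) and (3) are then essentially algebraic consequences — a squaring argument and a biquadratic root count, respectively — with $C_0$ and $K > K_2$ ensuring all the relevant quantities have the right signs.
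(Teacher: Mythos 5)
Your proposal is correct and follows essentially the same route as the paper: part (1) by direct computation, part (2) by locating the unique zero of $L(\tau)+dH(\tau)$ at $\bar{\tau}$ and establishing the sign change there, and part (3) by reducing the hexic equation to a quadratic in $w^2$, discarding the nonpositive roots, and showing $L^2>d^2H^2$ on $[0,\bar{\tau})$ from $A>0$ together with part (2). The only cosmetic difference is in part (2): you certify the sign pattern by clearing denominators in $u=e^{d\tau}$ and exhibiting the numerator as a concave quadratic with a single positive root (which indeed works out to $-(K+a)d^2u^2-3acmdu+Kc^2m^2$, whose positive root matches $\bar{\tau}$), whereas the paper checks that $L'(\bar{\tau})+dH'(\bar{\tau})<0$ at the unique zero; both are routine and equivalent.
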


\begin{proof}
Statement (1) is obtained by direct calculation.

(2). $L(\tau)+dH(\tau)$ is continuous for $\tau\in[0, \tau_{max})$. It is easy to check that $L(\tau)+dH(\tau)$ equals zero if and only if $\tau=\bar{\tau}$, and $$L'(\bar{\tau})+dH'(\bar{\tau})=-\frac{4ard^2(K+a)(\sqrt{4K^2+4Ka+9a^2}-3a)}{(2K+5a-\sqrt{4K^2+4Ka+9a^2})^2K}<0.$$  
Hence, the sign of $L(\tau)+dH(\tau)$ is obtained for $\tau\in[0, \tau_{max})$.

(3) Let $Z=w^2$, and substitute $Z$ into the hexic equation \eqref{w polynomial}. Then
three solutions appear
$$
Z_{\pm}=\frac{\pm\sqrt{H^4 - 4d^2H^2 + 4L^2}}{2}-\frac{H^2}{2}, Z_3=\frac{-L^2}{d^2}.$$
Since $Z=w^2>0$, $Z_{-}$ and $Z_3$ are discarded. To ensure $w=\sqrt{Z_+}$ is well defined, it suffices to show  $$(H(\tau))^4 - 4d^2(H(\tau))^2 + 4(L(\tau))^2> (H(\tau))^4,$$
i.e., $(L(\tau))^2>d^2(H(\tau))^2$. Since $A(\tau)$ is positive for $\tau\in [0, \tau_{max})$, we have $L(\tau)=dH(\tau)+A(\tau)>dH(\tau)$.  Moreover, by statement (2), we have $L(\tau)>-dH(\tau)$ for $\tau\in [0, \bar{\tau})$. In conclusion, $L(\tau)>|dH(\tau)|$, and $(L(\tau))^2>d^2(H(\tau))^2$.
\end{proof}

\begin{corollary}
    The equilibrium $E^\ast$ is locally asymptotically stable if $K\leq K_2$ and $\tau\in(0, \tau_{max})$ , or $K_2<K<K_0$ and $\tau\in[\bar{\tau}, \tau_{max})$.
\end{corollary}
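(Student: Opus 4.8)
The plan is to monitor the number $N(\tau)$ of roots of $P(\lambda,\tau)=0$ lying in $\mathbb{C}^{+}$, counted with multiplicity, as $\tau$ ranges over $[0,\tau_{max})$. By Lemma~\ref{roots}, $N(\tau)$ is locally constant and can only change where $P(\cdot,\tau)$ acquires a root on the imaginary axis. Since $P(0,\tau)=A(\tau)>0$ under $C_{0}$, the root $\lambda=0$ never occurs, so it suffices to track purely imaginary roots $\lambda=\pm i w$ with $w>0$. As in the derivation of \eqref{w polynomial}, the presence of such a root forces $w^{2}$ to be a positive root of $Z^{2}+H^{2}Z+(d^{2}H^{2}-L^{2})=0$; the only candidate is $Z_{+}=\tfrac12\sqrt{H^{4}-4d^{2}H^{2}+4L^{2}}-\tfrac12 H^{2}$, and $Z_{+}>0$ holds exactly when $L(\tau)^{2}>d^{2}H(\tau)^{2}$. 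Because $L(\tau)-dH(\tau)=A(\tau)>0$ on $[0,\tau_{max})$, this is equivalent to $L(\tau)+dH(\tau)>0$. Hence $P(\cdot,\tau)$ has no imaginary-axis root — so $N$ is constant — on any subinterval of $[0,\tau_{max})$ on which $L(\tau)+dH(\tau)\le 0$.

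Consider first $K\le K_{2}$. Then the argument of the logarithm defining $\bar\tau$ is at most $1$ (equality iff $K=K_{2}$), so $\bar\tau\le 0$ and the unique zero of $L+dH$ lies outside $(0,\tau_{max})$. A direct evaluation at the right endpoint gives $H(\tau_{max})=-r$ and $A(\tau_{max})=0$, whence $L(\tau_{max})+dH(\tau_{max})=-2rd<0$; since $L+dH$ is continuous with no zero in $(0,\tau_{max})$, it is negative there. Thus $N$ is constant on $(0,\tau_{max})$, equal to $\lim_{\tau\to 0^{+}}N(\tau)$. At $\tau=0$ the characteristic equation reduces to $\lambda^{2}-H(0)\lambda+A(0)=0$, which is exactly the linearization of the ODE model \eqref{ODE model} at $E^{*}$; by Lemma~\ref{ODEsystemdynamics}, $E^{*}$ is asymptotically stable for $K_{c}<K<K_{0}$, and $K\le K_{2}<K_{0}$, so $N(0)=0$. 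Hence $N\equiv 0$ on $(0,\tau_{max})$ and $E^{*}$ is locally asymptotically stable there.

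Next consider $K_{2}<K<K_{0}$. By Theorem~\ref{wtau theorem}(2), $L(\tau)+dH(\tau)\le 0$ for all $\tau\in[\bar\tau,\tau_{max})$, so $N$ is constant on $[\bar\tau,\tau_{max})$; since this interval excludes $\tau=0$, I would evaluate $N$ by letting $\tau\uparrow\tau_{max}$. Using $H(\tau_{max})=-r$ and $L(\tau_{max})=-rd$, the characteristic function factors as $P(\lambda,\tau_{max})=(\lambda+r)\bigl(\lambda+d-de^{-\lambda\tau_{max}}\bigr)$; the second factor is the characteristic function of $\dot u=-d\bigl(u(t)-u(t-\tau_{max})\bigr)$, and a one-line modulus estimate shows all of its roots lie in the closed left half-plane, with $\lambda=0$ the only root on the imaginary axis and simple. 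Thus $P(\cdot,\tau_{max})$ has no root in $\mathbb{C}^{+}$; moreover the root near $0$ satisfies, by the implicit function theorem, $\lambda'(\tau_{max})=-A'(\tau_{max})/\bigl(r(1+d\tau_{max})\bigr)>0$ (since $A$ is strictly decreasing), so for $\tau$ slightly below $\tau_{max}$ this root lies in the left half-plane. Since no root escapes to infinity — the estimate in Lemma~\ref{roots} remains valid up to $\tau=\tau_{max}$ — we obtain $N(\tau)=0$ for $\tau$ near $\tau_{max}$, hence $N\equiv 0$ on $[\bar\tau,\tau_{max})$, giving local asymptotic stability of $E^{*}$ there.

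The step I expect to be the main obstacle is the second case: because $[\bar\tau,\tau_{max})$ does not contain the ODE limit $\tau=0$, the count $N$ cannot be anchored there, and one is forced into the degenerate endpoint $\tau=\tau_{max}$, at which $E^{*}$ coalesces with $E_{K}$. The delicate ingredients are the exact factorization of $P(\lambda,\tau_{max})$, the location of the roots of the pure-delay factor $\lambda+d-de^{-\lambda\tau_{max}}$ (closed left half-plane, with $\lambda=0$ simple), and the continuity/implicit-function argument — including ruling out roots arriving from infinity as $\tau\uparrow\tau_{max}$ — needed to conclude that no unstable root is created when $\tau$ is decreased from $\tau_{max}$.
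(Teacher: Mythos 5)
Your core mechanism is exactly the paper's: under the stated hypotheses $L(\tau)^2\le d^2H(\tau)^2$ (equivalently $L+dH\le 0$, since $L-dH=A>0$), so the hexic \eqref{w polynomial} has no positive root $w$, hence no purely imaginary characteristic roots, and by Lemma~\ref{roots} the count of roots in $\mathbb{C}^+$ cannot change. The paper's proof stops there with ``the desired result follows.'' What you add — and what the paper leaves implicit — is the anchoring of that root count. For $K\le K_2$ you anchor at $\tau=0$ via the ODE linearization, which is the natural (and presumably intended) completion. For $K_2<K<K_0$ on $[\bar\tau,\tau_{max})$ the interval excludes $\tau=0$, and your anchoring at the degenerate endpoint $\tau_{max}$ via the factorization $P(\lambda,\tau_{max})=(\lambda+r)\bigl(\lambda+d-de^{-\lambda\tau_{max}}\bigr)$, the modulus estimate locating the roots of the second factor, and the sign of $\lambda'(\tau_{max})$ (consistent with the computation in Theorem~\ref{boundary}) is a genuinely cleaner route than tracking crossings from $\tau=0$ across the unknown set of Hopf points in $(0,\bar\tau)$, which would require the pairing structure of (H1). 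Your computations check out: $H(\tau_{max})=-r$, $A(\tau_{max})=0$, $L(\tau_{max})=-rd$, and $P_\lambda(0,\tau_{max})=r(1+d\tau_{max})$ with $A'<0$ give $\lambda'(\tau_{max})>0$. Two small points to tidy: (i) at $\tau=\bar\tau$ one has $L+dH=0$, so $Z_+=0$ rather than negative — still no root with $w>0$, but worth stating (the paper's own ``$d^2H^2-L^2>0$'' is likewise not strict there); (ii) your claim that for $K\le K_2$ the unique zero of $L+dH$ lies outside $(0,\tau_{max})$ extends Theorem~\ref{wtau theorem}(2) beyond its stated range $K>K_2$, which is consistent with the paper's assertion in this corollary but should be flagged as requiring the same ``unique zero'' computation.
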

\begin{proof}
Based on Theorem \ref{wtau theorem}, $d^2(H(\tau))^2-(L(\tau))^2>0$ if $K\leq K_2$ and $\tau\in(0, \tau_{max})$ , or $K_2<K<K_0$ and $\tau\in[\bar{\tau}, \tau_{max})$. Then \eqref{w polynomial} has no positive root and the desired result follows.
\end{proof}

\subsection{Local Hopf bifurcation at \texorpdfstring{$E^*$}{}}

By statement (3) of Theorem \ref{wtau theorem}, we assume  $0\leq\tau\leq \bar{\tau}<\tau_{max}$ and $K>K_2$, which
guarantees that the hexic equation \eqref{w polynomial} has a positive root $w=w(\tau)$, a necessary condition for Hopf bifurcation.

We explore the critical values for $\tau\in [0, \bar{\tau})$ at which Hopf bifurcation occurs. From the second equation of system \eqref{sin and cos}, it is reasonable to define the angle $\tau w(\tau)$ using the inverse cosine function, while also considering the sign of $\sin(\tau w(\tau))$. Here $\sin(\tau w(\tau))$ is the composition of the functions $\sin(\tau w)$ and $w(\tau)$, which are given by the first equation of system \eqref{sin and cos} and equation \eqref{wtauexpression}, respectively.


\begin{lemma}\label{sign of theta}
  Define $\Breve{\tau}={\frac {1}{d}\ln  \left( {\frac {cm \left( K-a \right) }{d \left(K+a
 \right) }} \right) }
$. 
\begin{enumerate}

\item[(1)] If $K< K_0$, then $\Breve{\tau}<0$, or $\breve{\tau}$ does not exist,  and  $\sin(\tau w(\tau))> 0$  for all $\tau\in[0,\bar{\tau})$. 

\item[(2)] If $K=K_0$, then $\breve{\tau}=0$, $\sin(\breve{\tau} w(\breve{\tau}))= 0$. Moreover, $\sin(\tau w(\tau))> 0$  for all $\tau\in(0,\bar{\tau})$.

\item[(3)] If $K>K_0$, $\breve{\tau}$ is well defined with $0<\breve{\tau}<\bar{\tau}$.  Then $\sin(\tau w(\tau))$ changes sign as follows:

$\sin(\tau w(\tau))<0$  for $0<\tau<\breve{\tau}$; $\sin(\tau w(\tau))=0$  for $\tau=\breve{\tau}$;  $\sin(\tau w(\tau))>0$ for $\breve{\tau}<\tau<\bar{\tau}$.
\end{enumerate}
Consequently, the function $\theta_n(\tau), \tau\in[0, \bar{\tau})$ is well defined as follows.

\begin{itemize}
    \item If $\sin(\tau w(\tau))<0$:  $\theta_n(\tau)=-\arccos\left(\frac{LdH+Lw^2+d^2w^2-dHw^2}{L^2+d^2w^2}\right)+2n\pi.$

    \item If $\sin(\tau w(\tau))\geq 0$: $\theta_n(\tau)=\arccos\left(\frac{LdH+Lw^2+d^2w^2-dHw^2}{L^2+d^2w^2}\right)+2n\pi.$

\end{itemize}

Here $n\in \mathbb{N}$, and $\mathbb{N}$ is the set of all nonnegative integers.
\end{lemma}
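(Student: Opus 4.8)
The plan is to reduce the sign of $\sin(\tau w(\tau))$ --- that is, the sign of the expression in the first line of \eqref{sin and cos} after substituting $w=w(\tau)$ from \eqref{wtauexpression} --- entirely to the sign of the $(1,1)$-entry $H(\tau)$ of $J(E^\ast)$, and then to locate the zero of $H$. The key preliminary observation is that $\breve\tau$ is precisely that zero. Writing
\[
H(\tau)=\frac{rd\,\big[(K-a)\,ce^{-d\tau}m-(K+a)d\big]}{Kce^{-d\tau}m\,(ce^{-d\tau}m-d)},
\]
the denominator is positive on $[0,\tau_{max})$ under condition $C_0$; when $K>a$ the numerator is a strictly decreasing function of $\tau$ that vanishes exactly at $\tau=\breve\tau$, and when $K\le a$ it is strictly negative for all $\tau$ (so $\breve\tau$ does not exist and $H<0$ throughout). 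Hence the sign of $H(\tau)$ is governed entirely by the position of $\tau$ relative to $\breve\tau$.

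For the reduction itself, I would start from the first line of \eqref{sin and cos}, rewritten as
\[
\sin(\tau w)=\frac{w\,\big[\,d(L-w^2)-H(L+d^2)\,\big]}{L^2+d^2w^2},
\]
and use the hexic identity \eqref{w polynomial}, which after dividing by $d^2w^2+L^2$ reads $(L-w^2)(L+w^2)=H^2(w^2+d^2)$, so $L-w^2=H^2(w^2+d^2)/(L+w^2)$. Substituting gives
\[
d(L-w^2)-H(L+d^2)=\frac{H\,\big[\,dH(w^2+d^2)-(L+d^2)(L+w^2)\,\big]}{L+w^2}.
\]
By Theorem \ref{wtau theorem}(3) we have $L(\tau)>|dH(\tau)|>0$ on $[0,\bar\tau)$ with $K>K_2$, and expanding the bracket as $w^2(dH-L-d^2)+d^2(dH-L)-L^2$ exhibits it as a sum of three strictly negative terms. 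Since $w>0$, $L^2+d^2w^2>0$ and $L+w^2>0$, this yields the clean relation
\[
\sign\big(\sin(\tau w(\tau))\big)=-\sign\big(H(\tau)\big),\qquad \tau\in[0,\bar\tau).
\]

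It then remains to place $\breve\tau$ relative to $0$ and $\bar\tau$. Since $\breve\tau=0$ is equivalent to $cm(K-a)=d(K+a)$, i.e.\ to $K=K_0$, and $e^{d\breve\tau}=\frac{cm(K-a)}{d(K+a)}$ is increasing in $K$, one obtains $\breve\tau<0$ or undefined iff $K<K_0$, $\breve\tau=0$ iff $K=K_0$, and $\breve\tau>0$ iff $K>K_0$; moreover for $K>K_0$ the inequality $\breve\tau<\bar\tau$ reduces, after clearing denominators and rearranging, to $2K+a<\sqrt{4K^2+4aK+9a^2}$, i.e.\ to $a^2<9a^2$, so in fact $0<\breve\tau<\bar\tau$. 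Feeding the sign relation together with the monotonicity of the numerator of $H$ into these three cases reproduces (1)--(3) directly: where $\tau>\breve\tau$ (or $\breve\tau$ is absent) we have $H<0$ and $\sin(\tau w(\tau))>0$; at $\tau=\breve\tau$ we have $H=0$ and $\sin(\tau w(\tau))=0$; for $0<\tau<\breve\tau$ (which can occur only when $K>K_0$) we have $H>0$ and $\sin(\tau w(\tau))<0$. The ``Consequently'' clause is then immediate: \eqref{w polynomial} is precisely the identity $\sin^2(\tau w)+\cos^2(\tau w)=1$ for the two expressions in \eqref{sin and cos}, so $\cos(\tau w(\tau))\in[-1,1]$ and $\arccos$ of that expression is defined; once the sign of $\sin(\tau w(\tau))$ is pinned down, the angle $\tau w(\tau)$ modulo $2\pi$ must equal $+\arccos(\cdot)+2n\pi$ when $\sin\ge0$ and $-\arccos(\cdot)+2n\pi$ when $\sin<0$, which is exactly the stated definition of $\theta_n(\tau)$.

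The hard part is the sign reduction in the second step. A priori $\sin(\tau w(\tau))$ is an unwieldy function of $\tau$ through the nested radicals in \eqref{wtauexpression}, and it is not evident that it changes sign at most once; the whole argument hinges on noticing that the factor $d(L-w^2)-H(L+d^2)$ collapses, via the identity $(L-w^2)(L+w^2)=H^2(w^2+d^2)$ extracted from \eqref{w polynomial} and the already-established bound $L>|dH|$, onto the single monotone quantity $H(\tau)$. Once that is in hand, the remaining steps --- monotonicity of $H$, the location of $\breve\tau$, and the elementary inequality $\breve\tau<\bar\tau$ --- are routine.
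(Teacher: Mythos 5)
Your proposal is correct, and it reaches the conclusion by a genuinely different route from the paper's. The paper first solves $\sin(\tau w(\tau))=0$ for $H(\tau)$, discards the spurious solutions $H=L/d$ and $H=2d$, and concludes that sign changes can occur only at $\tau=\breve{\tau}$; it then determines the actual sign by two separate computations (for $K<K_0$, the rather involved comparison of $\xi_1=2dA-dH^2-2AH$ against $\xi_2=d\sqrt{H^4+8dAH+4A^2}$ via $\xi_1^2-\xi_2^2$; for $K\geq K_0$, the derivative $\frac{d}{d\tau}\sin(\tau w(\tau))\big|_{\breve{\tau}}>0$). You instead prove the single identity $\sign\bigl(\sin(\tau w(\tau))\bigr)=-\sign\bigl(H(\tau)\bigr)$ on all of $[0,\bar{\tau})$, by factoring the numerator of \eqref{sin and cos} as $w\bigl[d(L-w^2)-H(L+d^2)\bigr]$, eliminating $L-w^2$ through the reduced hexic $(L-w^2)(L+w^2)=H^2(w^2+d^2)$ from \eqref{w polynomial}, and checking that the resulting bracket $w^2(dH-L-d^2)+d^2(dH-L)-L^2$ is negative because $L>|dH|$ (Theorem \ref{wtau theorem}(3)). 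I verified the algebra: the factorization, the sign of each of the three terms, the explicit form $H=\frac{rd[(K-a)ce^{-d\tau}m-(K+a)d]}{Kce^{-d\tau}m(ce^{-d\tau}m-d)}$ with its strictly decreasing numerator, the equivalence $\breve{\tau}=0\Leftrightarrow K=K_0$, and the reduction of $\breve{\tau}<\bar{\tau}$ to $a^2<9a^2$ all hold. Your approach buys a unified treatment of all three cases at once and avoids both the nested-radical estimate and the derivative computation at $\breve{\tau}$; the paper's approach is more computational but makes the ``sign change occurs only where $H$ vanishes'' step explicit by exhibiting the full solution set of $\sin(\tau w(\tau))=0$ in terms of $H$.
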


\begin{proof}
Solving $\sin(\tau w(\tau))=0$ for $H(\tau)$, we obtain three solutions $$H(\tau)=0, H(\tau)=\frac{L(\tau)}{d}, H(\tau)=2d.$$
The solution $H(\tau)=\frac{L(\tau)}{d}$ is discarded since it is valid only for $\tau=\tau_{max}$. The third solution $H(\tau)=2d$ is also discarded. Indeed, substituting $H(\tau)=2d$ into $\sin(\tau w(\tau))=0$ yields $|L(\tau)|+L(\tau)=0$. Since $A(\tau)>0$, we have
$$0<2d^2+A(\tau)=dH(\tau)+A(\tau)=L(\tau)=-|L(\tau)|\leq 0,$$ which leads to a contradiction. Therefore, the remaining solution $H(\tau)=0$ holds when $\tau=\breve{\tau}$.

Case (1). If $K<K_0$, then $\Breve{\tau}< 0$, or $\breve{\tau}$ does not exist. Then $H(\tau)<0$ for all $\tau\in[0, \bar{\tau})$. By continuity,  $\sin(\tau w(\tau))$ does not change sign as $\tau$ varies in $[0,\bar{\tau})$. We are now investigating the sign of $\sin(\tau w(\tau))$.

Since $L(\tau)=dH(\tau)+A(\tau)$ and $w(\tau)$ is given by \eqref{wtauexpression}, we obtain
\begin{equation*} \label{Inequality for sin theta}
        \sign\{\sin(\tau w(\tau)\}=
    \sign\{dL(\tau)-H(\tau)L(\tau)-d^2H(\tau)-d(w(\tau))^2\}=
   \sign\{\xi_1-\xi_2\},
    \end{equation*}
where $\xi_1=2dA(\tau)-d(H(\tau))^2-2A(\tau)H(\tau)$ and $
\xi_2=d\sqrt{(H(\tau))^4+8dA(\tau)H(\tau)+4(A(\tau))^2}.$

Since $ L(\tau)>-dH(\tau)$ for $\tau\in[0, \bar{\tau})$, it follows that $A(\tau)=L(\tau)-dH(\tau)>-2dH(\tau)$. Noting that $H(\tau)<0$, we get
$\xi_1> 3d(H(\tau))^2-4d^2H(\tau)=dH(\tau)(3H(\tau)-4d)>0$. It is apparent that $\xi_2\geq 0$.

Next, we analyze the sign of $\xi_1^2-\xi_2^2$. Since $H(\tau)<0$, we obtain
\begin{eqnarray*}
    \sign\{\xi_1^2-\xi_2^2\}&=&\sign\{ A(\tau) H(\tau) \left(H(\tau) -2 d \right) \left(dH(\tau)+A(\tau)+d^{2}\right)\}\\ &=&\sign\{dH(\tau)+A(\tau)+d^{2}\}>0.
\end{eqnarray*}
Since $A(\tau)>-2dH(\tau)$, it follows that $dH(\tau)+A(\tau)+d^{2}>-dH(\tau)+d^2>0$. Thus, $\xi_1^2-\xi_2^2>0,$ implying $\xi_1>\xi_2>0$.  Hence,
$$\sign\{\sin(\tau w(\tau)\}=\sign\{\xi_1-\xi_2\}=
\frac{\sign\{\xi^2_1-\xi^2_2\}}{\sign\{\xi_1+\xi_2\}} =1, \ \forall   \tau\in[0,\bar{\tau}).
$$

Cases (2) and (3). Since $K_0>a$, if $K\geq K_0$, then $\breve{\tau}$ is well defined.
In this case, to study the sign of $\sin(\tau w(\tau))$,  it suffices to check the sign of its derivative at $\breve{\tau}$. We have $$\frac{d}{d\tau}(\sin(\tau w(\tau)))\big|_{\tau=\breve{\tau}}=\frac{\sqrt{\frac{a r d}{K}}\, \left(a -K \right)^{2}}{2 a^{2}}>0,$$ and which implies that $\sin(\tau w(\tau))$ changes sign from negative to positive at $\Breve{\tau}$. The additional condition $K>K_0$ ensures that $\breve{\tau}$ is positive, and the desired results follow.
\end{proof}


\begin{figure}[H]
    \centering
   \subfigure[$K<K_0$]{\includegraphics[angle=0, width=0.43\textwidth]{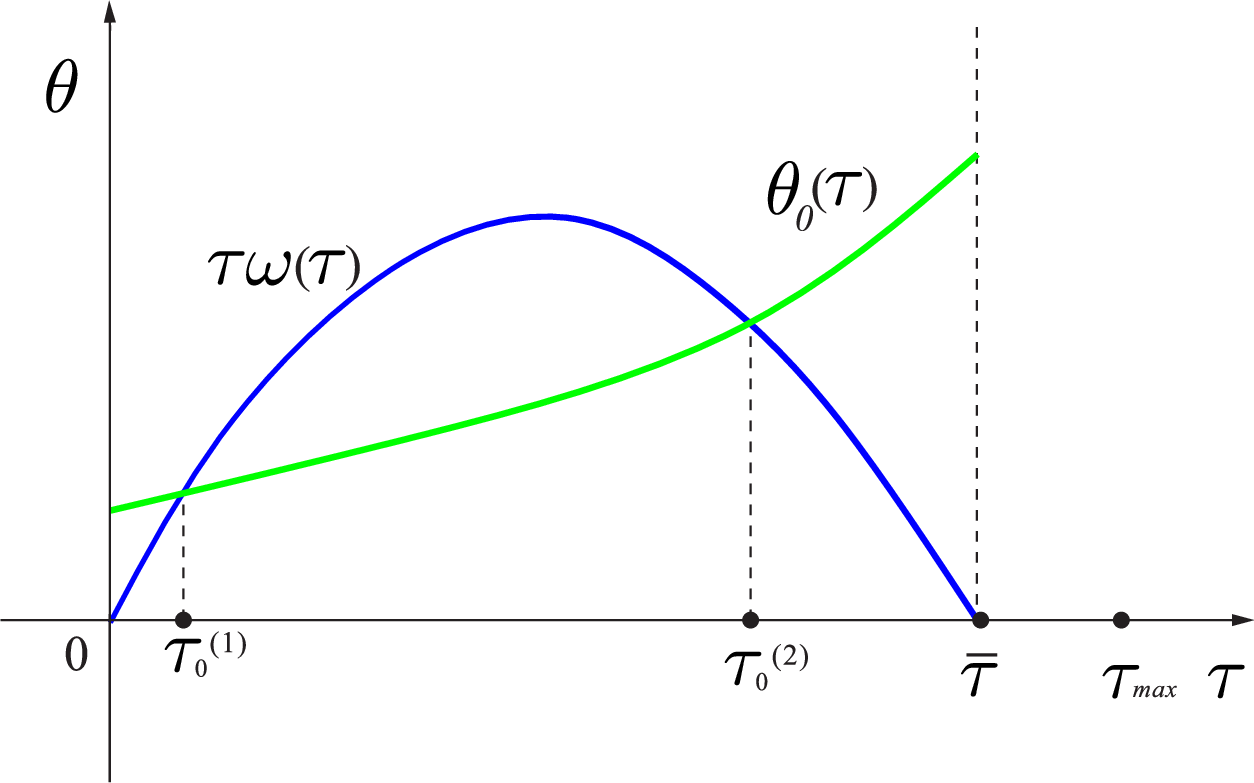}} \
   \subfigure[$K>K_0$]{\includegraphics[angle=0, width=0.43\textwidth]{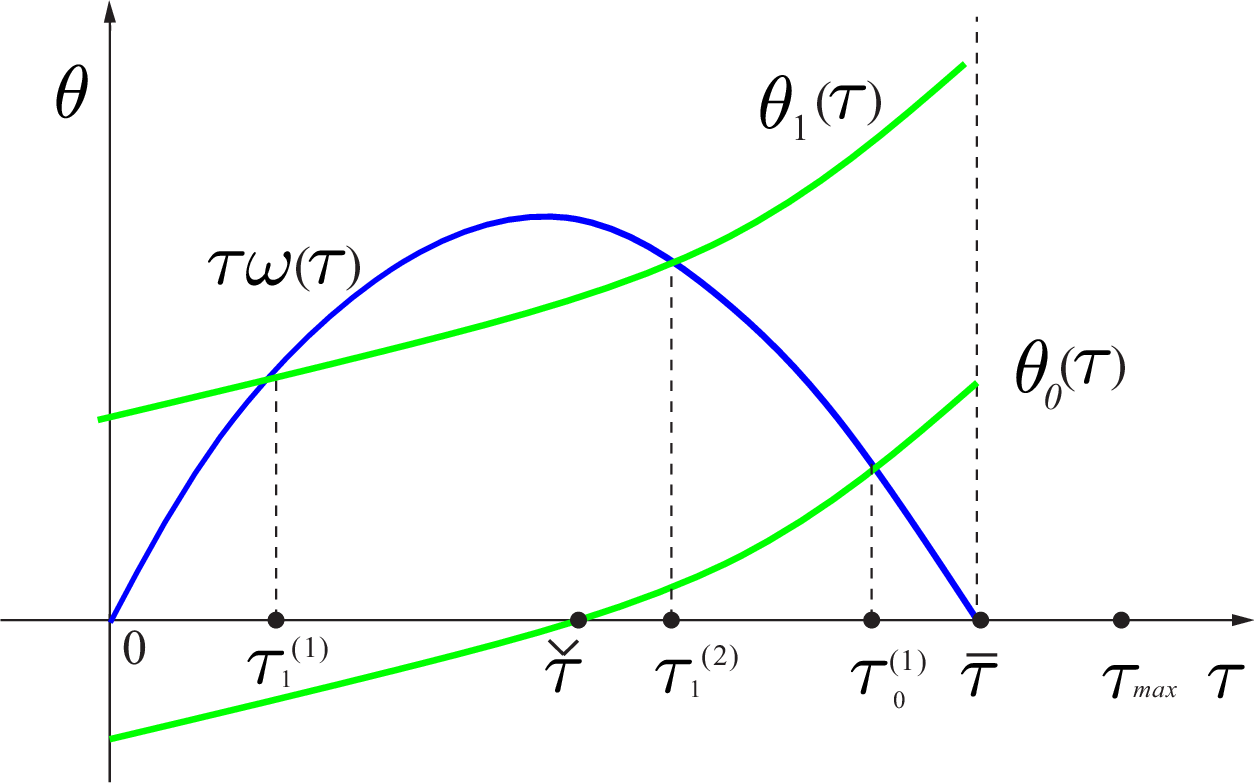}}
    \label{tau34}
    \caption{Intersection of $\tau w(\tau)$ and $\theta_n(\tau)$. Here, the graphs of $\tau w(\tau)$ and $\theta_n(\tau)$ are sketched in a relatively simple way, but ensuring that the properties of Lemma \ref{theta12} are all satisfied.}
\end{figure}


By Theorem \ref{wtau theorem} and Lemma \ref{sign of theta}, we obtain two functions $$\theta=\tau w(\tau), \ \theta=\theta_n(\tau), \tau\in [0, \bar{\tau}].$$  These two functions are defined at $\tau=\bar{\tau}$ by continuous extension. The following lemma is obvious.

\begin{lemma}\label{theta12}
Consider two functions $\theta=\tau w(\tau)$ and $\theta=\theta_n(\tau)$, where $n\in \mathbb{N}$.

(1) $\theta=\tau w(\tau)\geq 0$ on  $\tau\in[0, \bar{\tau}]$ and vanishes only at $\tau=0$ and $\tau=\bar{\tau}$.

(2) 
$\theta_0(0)>0 (=0, <0, resp.)$ if $K<K_0
 (=K_0, >K_0, resp)$.

 (3) $\theta_0(\bar{\tau})=\pi$ regardless of $K$.

 (4) If $K<K_0$, then there exists $\zeta>0$ such that  $0<\zeta\leq \theta_0(\tau)\leq \pi$ for $\tau \in [0,\bar{\tau}]$.
\end{lemma}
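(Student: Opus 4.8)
The plan is to verify Lemma~\ref{theta12} one item at a time, since each claim is a direct consequence of results already established in Theorem~\ref{wtau theorem} and Lemma~\ref{sign of theta}. For item~(1), I would observe that $w(\tau)>0$ for $\tau\in(0,\bar\tau)$ by statement~(3) of Theorem~\ref{wtau theorem} (the expression under the outer square root in \eqref{wtauexpression} is positive there since $(L(\tau))^2>d^2(H(\tau))^2$), so $\tau w(\tau)>0$ on $(0,\bar\tau)$; at $\tau=0$ the product vanishes trivially, and at $\tau=\bar\tau$ one checks that $L(\bar\tau)+dH(\bar\tau)=0$ forces $w(\bar\tau)=0$. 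Indeed, when $L+dH=0$ we have $A=L-dH=-2dH$, and substituting into $Z_+ = \tfrac12\sqrt{H^4-4d^2H^2+4L^2}-\tfrac{H^2}{2}$ with $L=-dH$ gives $Z_+ = \tfrac12\sqrt{H^4}-\tfrac{H^2}{2}=0$; hence $w(\bar\tau)=0$ by continuous extension. So $\theta=\tau w(\tau)$ is nonnegative on $[0,\bar\tau]$ and vanishes exactly at the two endpoints.

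For items~(2) and~(3), I would evaluate $\theta_0(\tau)$ at the endpoints using the piecewise definition from Lemma~\ref{sign of theta} together with the formulas \eqref{sin and cos}. At $\tau=0$: by Lemma~\ref{sign of theta}, $\sin(\tau w(\tau))$ at $\tau=0^+$ has the sign dictated by the position of $K$ relative to $K_0$ (negative for $K>K_0$, positive for $K<K_0$, zero for $K=K_0$), and $\theta_0(0)=\pm\arccos(\cdots)$ with $n=0$; the sign of $\theta_0(0)$ therefore follows the sign of $\sin(\tau w(\tau))$ near $0$, giving exactly the trichotomy stated. At $\tau=\bar\tau$: since $w(\bar\tau)=0$, we have $L^2+d^2w^2 = L^2$ and the cosine argument in \eqref{sin and cos} reduces to $\cos(\bar\tau w(\bar\tau)) = \dfrac{LdH+d^2w^2-dHw^2+Lw^2}{L^2+d^2w^2}\Big|_{w=0} = \dfrac{LdH}{L^2} = \dfrac{dH}{L}$, and with $L=-dH$ this equals $-1$; hence $\theta_0(\bar\tau)=\arccos(-1)=\pi$ irrespective of the sign branch and of $K$. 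This disposes of (2) and (3).

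Item~(4) is where the only real work lies. I would argue that for $K<K_0$ we are always in Case~(1) of Lemma~\ref{sign of theta}, so $\sin(\tau w(\tau))>0$ on all of $[0,\bar\tau)$, which means $\theta_0(\tau)=\arccos(\cdots)\in(0,\pi]$ throughout. To upgrade this to a uniform lower bound $\zeta>0$, I would use that $\tau\mapsto\theta_0(\tau)$ is continuous on the compact interval $[0,\bar\tau]$: it is continuous on $[0,\bar\tau)$ because $w(\tau)$, $H(\tau)$, $L(\tau)$ are continuous and the arccos argument stays strictly inside $[-1,1)$ away from $\bar\tau$, and it extends continuously to $\bar\tau$ with value $\pi$ by the computation above. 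Since $\theta_0(0)>0$ (by item~(2), as $K<K_0$), $\theta_0(\bar\tau)=\pi>0$, and $\theta_0$ is continuous and strictly positive on the closed interval, it attains a positive minimum $\zeta:=\min_{\tau\in[0,\bar\tau]}\theta_0(\tau)>0$; the upper bound $\theta_0(\tau)\le\pi$ is immediate from the range of $\arccos$ shifted by $n=0$. The main obstacle is the careful check that $\theta_0$ does \emph{not} touch $0$ anywhere on $(0,\bar\tau)$ — i.e. that the arccos argument never equals $1$ — which I would handle by noting that the argument equals $1$ iff $\sin(\tau w(\tau))=0$ and $\cos(\tau w(\tau))=1$ simultaneously, and Lemma~\ref{sign of theta}(1) already rules out $\sin(\tau w(\tau))=0$ on $[0,\bar\tau)$ when $K<K_0$; combined with the strict positivity of $\sin$, this keeps $\theta_0(\tau)$ bounded away from both $0$ and $2\pi$, so the compactness argument closes the proof.
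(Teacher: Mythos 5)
Your proposal is correct and follows essentially the same route as the paper: items (1)--(2) are read off from Theorem \ref{wtau theorem} and Lemma \ref{sign of theta}, item (3) comes from the endpoint computation $\sin(\bar{\tau}w(\bar{\tau}))=0$, $\cos(\bar{\tau}w(\bar{\tau}))=-1$, and item (4) from continuity of $\theta_0$ on the compact interval $[0,\bar{\tau}]$ together with the strict positivity of $\sin(\tau w(\tau))$ when $K<K_0$. The only difference is that you spell out a few calculations the paper leaves implicit (e.g.\ $w(\bar{\tau})=0$ via $L=-dH$, and the check that the $\arccos$ argument never reaches $1$), which is a welcome but inessential addition.
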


\begin{proof}
Statements (1) and (2) follow from Theorem \ref{wtau theorem} and Lemma \ref{sign of theta}.

For statements (3) and (4), a straightforward calculation gives $\sin(\bar{\tau} w(\bar{\tau}))=0$ and $\cos(\bar{\tau} w(\bar{\tau}))=-1$, which implies that $\theta_0(\bar{\tau})=\pi$. Since $\theta_0(\tau)$ is continuous on $[0, \bar{\tau}]$, it attains the maximum and minimum values on $[0, \bar{\tau}]$.   By statement (1) of Lemma \ref{sign of theta}, $\sin(\tau w(\tau))> 0$ for all $\tau\in[0,\bar{\tau})$, so $\zeta=\displaystyle\min_{\tau \in [0,\bar{\tau}]}\theta_0(\tau)>0$  and $\displaystyle\max_{\tau \in [0,\bar{\tau}]} \theta_0(\tau)=\pi$.
\end{proof}

 \begin{theorem}\label{lhopf}
     Assume that there exists $\tau^\star \in (0, \bar{\tau})$ such that $$\tau w(\tau)_{|\tau^\star}=\theta_n(\tau)_{|\tau^\star}, \ \text{and}\ \   (\tau w(\tau))'_{|\tau^\star}\neq (\theta_n(\tau))'_{|\tau^\star}$$ for some $n\in\mathbb{N}$. Then system \eqref{modelH2} undergoes a Hopf bifurcation at $\tau=\tau^\star$, and a family of non-constant
periodic solutions exists in a neighborhood of $E^\ast$ for $\tau$ in a small neighborhood of $\tau=\tau^\star$.
 \end{theorem}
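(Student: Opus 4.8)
The plan is to verify the hypotheses of the classical Hopf bifurcation theorem for delay differential equations (as found in Hale--Verduyn Lunel or Hassard--Kazarinoff--Wan) by combining the transversality condition in the hypothesis with the simplicity and isolation of the purely imaginary eigenvalues established earlier. The three ingredients I need are: (i) at $\tau=\tau^\star$ the characteristic equation $P(\lambda,\tau)=0$ has a pair of simple purely imaginary roots $\lambda=\pm i w(\tau^\star)$ with $w(\tau^\star)>0$; (ii) no other root lies on the imaginary axis (in particular $\lambda=0$ is not a root, and no other integer multiple $\tau^\star w$ produces a root); and (iii) the transversality condition $\operatorname{Re}\left(\frac{d\lambda}{d\tau}\right)\big|_{\tau=\tau^\star}\neq 0$.

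First I would recall from Theorem~\ref{wtau theorem}(3) that for $K>K_2$ and $\tau\in[0,\bar\tau)$ equation \eqref{w polynomial} has the unique positive root $w(\tau)$ given by \eqref{wtauexpression}, and that the condition $\tau^\star w(\tau^\star)=\theta_n(\tau^\star)$ for some $n\in\mathbb{N}$ means precisely that both equations in \eqref{sin and cos} are satisfied at $(\tau^\star,w(\tau^\star))$; hence $\lambda=\pm i w(\tau^\star)$ are genuine roots of $P(\lambda,\tau^\star)=0$. Since $w(\tau^\star)$ is the \emph{only} positive solution of \eqref{w polynomial} and, by Lemma~\ref{roots}, $\lambda=0$ is never a root on $[0,\tau_{max})$, the purely imaginary roots on the axis are exactly $\pm i w(\tau^\star)$. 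Simplicity of $i w(\tau^\star)$ follows by checking $P_\lambda(i w(\tau^\star),\tau^\star)\neq 0$: differentiating \eqref{characteristics equation E^* Holling type 2} in $\lambda$ and using the relations \eqref{sin and cos}, one reduces $P_\lambda=0$ to an algebraic identity in $H,L,d,w$ which, by the sign information $L>|dH|$ from the proof of Theorem~\ref{wtau theorem}, cannot hold; alternatively simplicity is automatic once transversality is shown, since $\frac{d\lambda}{d\tau}$ is finite only if $P_\lambda\neq 0$.

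Next I would establish transversality. Implicitly differentiating $P(\lambda,\tau)=0$ gives
$$
\frac{d\lambda}{d\tau}=-\frac{P_\tau(\lambda,\tau)}{P_\lambda(\lambda,\tau)},
$$
and the standard manipulation is to compute $\left(\frac{d\lambda}{d\tau}\right)^{-1}$ at $\lambda=i w(\tau^\star)$, separate real and imaginary parts, and show the real part is nonzero. The key observation is that differentiating the scalar identity $\tau w(\tau)=\theta_n(\tau)$ is \emph{equivalent} to the transversality computation: a root crosses the imaginary axis at $\tau^\star$ exactly when the graphs of $\tau w(\tau)$ and $\theta_n(\tau)$ cross transversally, i.e. when $(\tau w(\tau))'|_{\tau^\star}\neq (\theta_n(\tau))'|_{\tau^\star}$, which is precisely the hypothesis. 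Making this equivalence rigorous is the main obstacle: I would parametrize roots near the imaginary axis as $\lambda(\tau)=\mu(\tau)+i\nu(\tau)$, substitute into $P=0$, and show by the implicit function theorem (applicable because $P_\lambda\neq0$) that $\operatorname{sign}\big(\mu'(\tau^\star)\big)=\operatorname{sign}\big((\theta_n(\tau))'|_{\tau^\star}-(\tau w(\tau))'|_{\tau^\star}\big)$ up to a strictly positive factor coming from $|P_\lambda|^2$ and the structure of \eqref{sin and cos}; the inequality hypothesis then forces $\mu'(\tau^\star)\neq0$.

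Finally, with (i)--(iii) in hand, I would invoke the Hopf bifurcation theorem for retarded functional differential equations: the center manifold at $\tau=\tau^\star$ is two-dimensional, the pair of eigenvalues crosses the imaginary axis with nonzero speed, all other eigenvalues stay off the axis, so a one-parameter family of small-amplitude nonconstant periodic solutions of \eqref{modelH2} bifurcates from $E^\ast$ for $\tau$ near $\tau^\star$, with period near $2\pi/w(\tau^\star)$. The boundedness of $H,L,A$ on $[0,\tau_{max})$ and the well-posedness results of Section~2 guarantee the hypotheses of that theorem (smoothness of the semiflow, compactness) are met, completing the proof.
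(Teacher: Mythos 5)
Your overall strategy is the same as the paper's: the intersection condition $\tau w(\tau)|_{\tau^\star}=\theta_n(\tau)|_{\tau^\star}$ delivers the purely imaginary pair $\pm i w(\tau^\star)$, the derivative condition delivers transversality, and the local Hopf theorem for retarded FDEs finishes the argument. The one substantive difference is how the crux --- the equivalence between transversal crossing of the graphs of $\tau w(\tau)$ and $\theta_n(\tau)$ and the analytic condition $\operatorname{Re}\lambda'(\tau^\star)\neq 0$ --- is handled. You correctly flag this as ``the main obstacle'' and propose to rederive it from scratch by implicit differentiation of $P(\lambda,\tau)=0$; the paper instead disposes of it by citing the Beretta--Kuang geometric stability switch criterion (Theorem~2.2 of \cite{Kuang2002}), which for delay-dependent coefficients gives directly
$$\sign\Big\{\tfrac{d\alpha}{d\tau}\Big|_{\tau^\star}\Big\}=\sign\{F_w(w(\tau^\star))\}\cdot\sign\{S_n'(\tau^\star)\},\qquad S_n(\tau)=\tau-\frac{\theta_n(\tau)}{w(\tau)},$$
and then checks $F_w=4w^3+2H^2w>0$ and $S_n'(\tau^\star)=\big((\tau w)'|_{\tau^\star}-\theta_n'(\tau^\star)\big)/w(\tau^\star)$. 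Your route would work, but the sign identity you assert (``up to a strictly positive factor coming from $|P_\lambda|^2$'') is precisely the content of that cited theorem and is the only part of your argument that is not actually carried out; as written it is a plan rather than a proof, and note also that your stated sign convention is opposite to the paper's $\delta(\tau^\star)=\sign\{(\tau w)'-\theta_n'\}|_{\tau^\star}$ (harmless for this theorem, which only needs nonvanishing, but it would flip the crossing direction used in Remark~\ref{delta}). On the other hand, you are more careful than the paper on two points it leaves implicit: simplicity of $\pm iw(\tau^\star)$ and the absence of other roots on the imaginary axis (which does follow from uniqueness of the positive root of \eqref{w polynomial} under $K>K_2$, $\tau<\bar\tau$, together with Lemma~\ref{roots} excluding $\lambda=0$). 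If you either complete the implicit-differentiation computation or simply invoke \cite{Kuang2002}, your proof is complete.
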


 \begin{proof}
Let $\lambda(\tau)=\alpha(\tau)\pm i\beta(\tau)$ be a pair of conjugate complex eigenvalues of $P(\lambda, \tau)=0$. The condition $\tau w(\tau)_{|\tau^\star}=\theta_n(\tau)_{|\tau^\star}$ implies that $\alpha(\tau^\star)=0$ and $\beta(\tau^\star)=w(\tau^\star)>0$, meaning that $P(\lambda, \tau)=0$ has a pair of purely imaginary eigenvalues $\pm iw(\tau^\star)$ at $\tau=\tau^\star$. Next we check the transversality condition by defining the function $$S_n(\tau)=\tau-\frac{\theta_n(\tau)}{w(\tau)}, n\in\mathbb{N}.$$

By Theorem 2.2 of \cite{Kuang2002} (the geometric stability switch criterion), we obtain
\begin{equation*}\label{Beretta Thm}
\delta(\tau^\star):=\sign \bigg\{\frac{d \alpha(\tau)}{d\tau}\bigg |_{\tau=\tau^\star} \bigg\}=\sign \bigg\{\frac{d F(w(\tau))}{dw}\bigg |_{\tau=\tau^\star} \bigg\}\cdot \sign\bigg\{\frac{d S_n(\tau)}{d\tau}\bigg|_{\tau=\tau^\star} \bigg\},
\end{equation*}
where
$F(w(t))=(w(\tau))^4+(H(\tau))^2(w(\tau))^2+d^2(H(\tau))^2-(L(\tau))^2$. Thus,
$$F_w(w(\tau))=4(w(\tau))^3+2(H(\tau))^2w(\tau)>0, \, \forall \tau\in (0,\bar{\tau}).$$


By the definition of $S_n(\tau)$, we have
$$ \frac{dS_n(\tau)}{d\tau}=1-\frac{\theta'_n(\tau)w(\tau)-w'(\tau)\theta_n(\tau)}{(w(\tau))^2}=\frac{\tau(w(\tau))^2-[\theta_n'(\tau)\tau w(\tau)- \tau w'(\tau)\theta_n(\tau)]}{\tau(w(\tau))^2}.$$
Since $(\tau w(\tau))'=\tau w'(\tau) +w(\tau)$, we get $\tau  w'(\tau)=(\tau w(\tau))'-w(\tau)$. Noting that $ \theta_n(\tau^\star)=\tau^\star w(\tau^\star)$, we obtain
\begin{eqnarray*}
\frac{dS_n(\tau)}{d\tau}\bigg|_{\tau=\tau^\star}&=&\dfrac{\tau^\star(w(\tau^\star))^2-\theta_n'(\tau^\star)\tau^\star w(\tau^\star)+[(\tau w(\tau))'_{|\tau^\star}-w(\tau^\star)]\tau^\star w(\tau^\star)}{\tau^\star(w(\tau^\star))^2}\\
&=&\dfrac{\tau^\star w(\tau^\star)[(\tau w(\tau))'_{|\tau^\star}-\theta_n'(\tau^\star)]}{\tau^\star(w(\tau^\star))^2}=\dfrac{(\tau w(\tau))'_{|\tau^\star}-\theta_n'(\tau^\star)}{w(\tau^\star)}.
\end{eqnarray*}
Since $w(\tau^\star)>0$, we obtain
$$\delta(\tau^\star)=\sign\{ (\tau w)'-\theta'_n(\tau)_{|\tau=\tau^\star}\}\neq 0,$$ which completes the proof.\end{proof}

\begin{remark}\label{delta}
$\delta(\tau^\star)$  reveals the direction along which the complex conjugate eigenvalues $\lambda=\alpha(\tau)\pm i\beta(\tau)$ cross the imaginary axis as $\tau$ increases through $\tau=\tau^\star$. If $\delta(\tau^\star)=1$ (resp. $\delta(\tau^\star)=-1$), $\lambda$  crosses the imaginary axis from left to right (resp. from right to left).
If $\delta(\tau^\star)\neq 0$, the graphs of two functions $\theta=\tau w(\tau)$ and $\theta=\theta_n(\tau)$ intersect transversely at $\tau=\tau^\star$. Otherwise, the two graphs are tangent.
\end{remark}

It is technically difficult to characterize the number of intersections of $\theta=\tau w(\tau)$ and $\theta=\theta_n(\tau)$ or the roots of $S_n(\tau)=0$, $n\in\mathbb{N}$. There may be no roots or several roots.
\begin{definition}
Let $\chi(n)$ be the number of roots of $S_n(\tau)=0$ on $(0, \bar{\tau})$, where $n\in \mathbb{N}$.
\end{definition}

The following hypothesis is necessary for the occurrence of Hopf bifurcation.

\begin{enumerate}
    \item [(H1)]Suppose that $S_n(\tau)=0$ has finitely many roots on $(0, \bar{\tau})$, i.e., $0\leq \chi(n)<\infty, \forall n\in \mathbb{N}$, and these $\chi(n)$ roots are given and arranged in the following order $$\tau_n^{(1)}<\tau_n^{(2)}<\cdots<\tau_n^{(i)}<\cdots \tau_n^{(\chi(n))},$$
with $\delta(\tau_n^{(i)})=\pm 1$ $(i=1, 2, \cdots, \chi(n))$.
\end{enumerate}

\begin{remark}
    Let $\Lambda=\{j\in\mathbb{N}|\chi(j)\neq 0\}$. If $\chi(n)=2$ for all $n\in\Lambda$, then we denote $\tau_n^{(1)}$ and $\tau_n^{(2)}$ as $\tau_n^{-}$ and $\tau_n^{+}$, respectively.
\end{remark}

\noindent {\it Example 1.} Two sets of parameters are chosen to show the distribution of roots of $S_n(\tau)=0$.

\vspace{-\topsep}
\begin{enumerate}

\item [(a)] Choose $K= 1, r=30, m =1, c=4, d=0.1, a=1$. See Fig.\ref{sn56}(a). Here $K<K_0=\frac{41}{39}$, and we have  $\chi(0)=\chi(1)=\chi(2)=\chi(3)=2$, and $\chi(n)=0$ for $n\geq 4$. The critical Hopf bifurcation values are
\begin{equation*}
\left.\begin{array}{llll}
\tau_0^-=0.013562, & \tau_0^+=23.67336, & \tau_1^-=3.8062, & \tau_1^+=21.49988,\\
   \tau_2^-=7.81234, & \tau_2^+=19.55209, & \tau_3^-=12.7067, & \tau_3^+=17.10348.
\end{array}\right.
\end{equation*}

\item [(b)] Choose $K= 20, r=10, m =1, c=4, d=0.1, a=5$.  See Fig.\ref{sn56}(b). Here $K>K_0=\frac{205}{39}$, and we have $\chi(0)=1$, $\chi(1)=2$, $\chi(2)=4$, and $\chi(n)=0$ for $n\geq 3$. The critical Hopf bifurcation values are
\begin{equation*}
\left.\begin{array}{llll}
\tau_0^{(1)}=32.42808, &
\tau_1^{(1)}=6.2049, & \tau_1^{(2)}=32.22374, &  \\
\tau_2^{(1)}=13.96008, & \tau_2^{(2)}=24.85518, & \tau_2^{(3)}=30.21286, & \tau_2^{(4)}=32.00694.
\end{array}\right.
\end{equation*}
\end{enumerate}
\vspace{-\topsep}

\begin{figure}[H]
\begin{center}
   \subfigure[$K<K_0$]{\includegraphics[angle=0, width=0.485\textwidth]{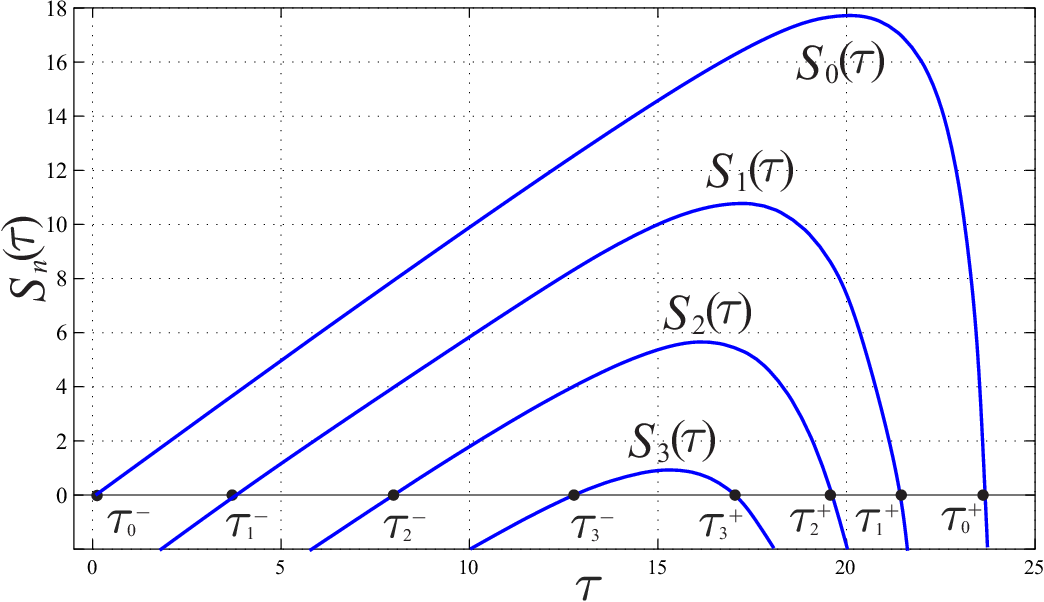}} \quad
   \subfigure[$K>K_0$]{\includegraphics[angle=0, width=0.47\textwidth]{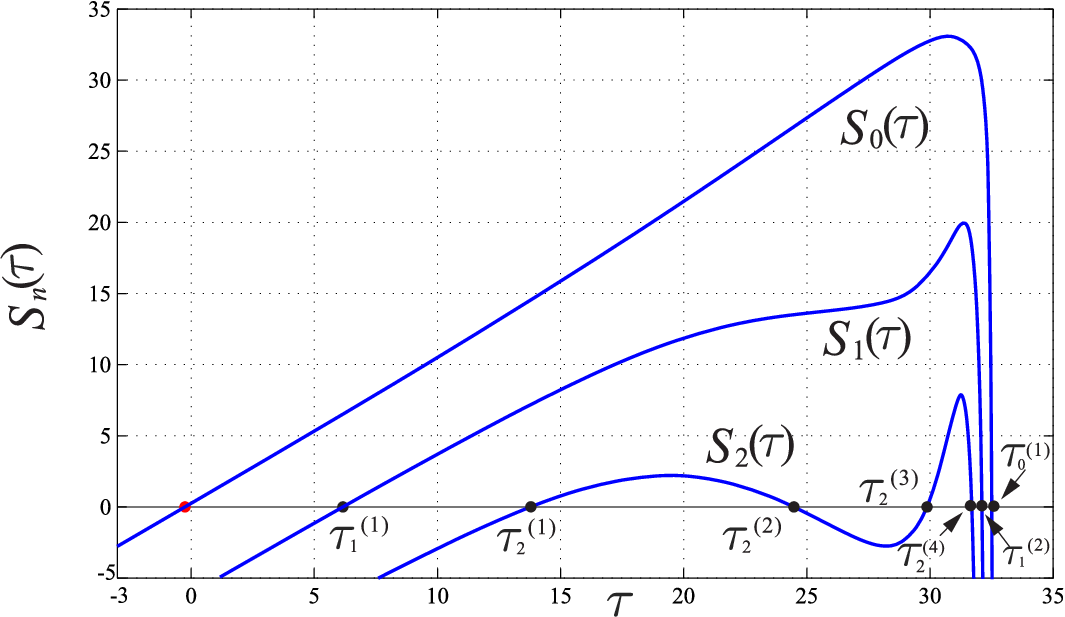}}
    \caption{Left panel: Roots of $S_n(\tau)=0$ for the parameters given in Example 1(a), where $K<K_0$. Right panel: Roots of $S_n(\tau)=0$ for the parameters given in Example 1(b), where $K>K_0$.}\label{sn56}
\end{center}
\end{figure}

\begin{corollary}\label{chopf}

Assume (H1), then Hopf bifurcation occurs at a sequence of critical values $\tau=\tau_n^{(i)}$ and there exists a family of non-constant periodic solutions in a neighborhood of $E^\ast$ for $\tau$ in a small neighborhood of $\tau=\tau_n^{(i)}$.
\end{corollary}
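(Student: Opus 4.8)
The plan is to obtain Corollary \ref{chopf} as an immediate consequence of Theorem \ref{lhopf}, applied separately at each of the critical values $\tau_n^{(i)}$ whose existence and nondegeneracy ($\delta(\tau_n^{(i)})=\pm1$) are postulated by hypothesis (H1). Fix $n\in\mathbb{N}$ and $i\in\{1,\dots,\chi(n)\}$ and put $\tau^\star=\tau_n^{(i)}$. It suffices to check the two hypotheses of Theorem \ref{lhopf} at $\tau^\star$: that $\tau^\star\in(0,\bar\tau)$ with $\tau w(\tau)_{|\tau^\star}=\theta_n(\tau)_{|\tau^\star}$, and that $(\tau w(\tau))'_{|\tau^\star}\neq(\theta_n(\tau))'_{|\tau^\star}$.

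The first requirement is immediate: by (H1) every root of $S_n(\tau)=0$ lies in $(0,\bar\tau)$, and since $S_n(\tau)=\tau-\theta_n(\tau)/w(\tau)$ with $w(\tau)>0$ on $(0,\bar\tau)$ (Theorem \ref{wtau theorem}), the equation $S_n(\tau^\star)=0$ is equivalent to the intersection condition $\tau^\star w(\tau^\star)=\theta_n(\tau^\star)$. For the transversality condition I would reuse the computation already carried out inside the proof of Theorem \ref{lhopf}, where it is shown that
$$\frac{dS_n(\tau)}{d\tau}\Big|_{\tau=\tau^\star}=\frac{(\tau w(\tau))'_{|\tau^\star}-\theta_n'(\tau^\star)}{w(\tau^\star)},$$
and, via the geometric stability switch criterion together with $F_w(w(\tau))=4(w(\tau))^3+2(H(\tau))^2 w(\tau)>0$, that $\delta(\tau^\star)=\sign\{F_w(w(\tau^\star))\}\cdot\sign\{dS_n/d\tau|_{\tau^\star}\}=\sign\{(\tau w(\tau))'_{|\tau^\star}-\theta_n'(\tau^\star)\}$. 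Hence the assumption $\delta(\tau_n^{(i)})=\pm1$ in (H1) forces $dS_n/d\tau|_{\tau_n^{(i)}}\neq0$, and therefore $(\tau w(\tau))'_{|\tau_n^{(i)}}\neq(\theta_n(\tau))'_{|\tau_n^{(i)}}$, which is exactly the nondegeneracy hypothesis of Theorem \ref{lhopf}.

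With both hypotheses verified, Theorem \ref{lhopf} applies at $\tau^\star=\tau_n^{(i)}$ and yields a Hopf bifurcation there, together with a family of non-constant periodic solutions of \eqref{modelH2} near $E^\ast$ for $\tau$ in a small neighborhood of $\tau_n^{(i)}$; letting $n$ range over $\mathbb{N}$ and $i$ over $\{1,\dots,\chi(n)\}$ produces the asserted sequence of critical values, finite for each $n$ by (H1) and hence countable overall. I do not expect a real obstacle here, since the statement merely repackages Theorem \ref{lhopf}; the only point needing care is the bookkeeping — confirming that "$\delta(\tau_n^{(i)})=\pm1$'' in (H1) coincides with the hypothesis $(\tau w)'\neq\theta_n'$ of Theorem \ref{lhopf}, which the displayed identity for $dS_n/d\tau$ makes transparent — and emphasizing that the periodic families are local in $\tau$ about each $\tau_n^{(i)}$, so no global continuation (treated in Section~4) is invoked at this stage.
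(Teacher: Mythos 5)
Your proposal is correct and matches the paper's intent exactly: the paper states this corollary without proof, treating it as an immediate consequence of Theorem \ref{lhopf} applied at each root $\tau_n^{(i)}$ of $S_n(\tau)=0$ supplied by (H1), with $\delta(\tau_n^{(i)})=\pm 1$ guaranteeing the transversality hypothesis via the displayed identity for $dS_n/d\tau$. Your write-up simply makes explicit the bookkeeping the paper leaves implicit.
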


The following is an attempt to study the roots of $S_n(\tau)=0$ for certain special cases.

\begin{proposition}\label{2n_thm}
Let
$M=\displaystyle\max_{\tau\in[0, \bar{\tau}]}\{\tau w(\tau)\}$. Assume that
$$K<K_0, \ \text{and}\ \ (2N+1)\pi<M< (2N+3)\pi $$
for some $N \in \mathbb{N}$. Then there are three cases.

(1) If $n\leq N$, $\theta_n(\tau)$ intersects $\tau w(\tau)$ at least twice.

(2) If $n=N+1$, $\theta_n(\tau)$ and $\tau w(\tau)$ intersect either at least twice, or once tangentially, or they never intersect.

(3) If $n\geq N+2$, $\theta_n(\tau)$ and $\tau w(\tau)$ never intersect in $(0,\bar{\tau})$.






\end{proposition}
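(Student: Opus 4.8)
The plan is to exploit the two elementary facts already recorded in Lemma~\ref{theta12}: the curve $\theta=\tau w(\tau)$ starts at $0$ when $\tau=0$, ends at $\pi$ when $\tau=\bar\tau$, and rises to the maximum value $M$ somewhere in between; while each curve $\theta=\theta_n(\tau)$ is, under the hypothesis $K<K_0$, a \emph{continuous} function taking values in the band $[\,2n\pi+\zeta,\ 2n\pi+\pi\,]$ with the special endpoint value $\theta_n(\bar\tau)=2n\pi+\pi$. The whole argument is then a comparison of the ranges of these two continuous functions on the compact interval $[0,\bar\tau]$, using the Intermediate Value Theorem applied to the difference $g_n(\tau):=\tau w(\tau)-\theta_n(\tau)$.

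First I would fix $n$ and analyze $g_n$ at the two endpoints. At $\tau=0$ we have $g_n(0)=0-\theta_n(0)$; by Lemma~\ref{theta12}(2), since $K<K_0$, $\theta_0(0)>0$, so $\theta_n(0)=\theta_0(0)+2n\pi>0$ and hence $g_n(0)<0$ for every $n\in\mathbb{N}$. At $\tau=\bar\tau$ we have $g_n(\bar\tau)=\bar\tau w(\bar\tau)-\theta_n(\bar\tau)=0-(2n\pi+\pi)<0$ (using $\bar\tau w(\bar\tau)=0$ from Lemma~\ref{theta12}(1) and $\theta_n(\bar\tau)=2n\pi+\pi$ from Lemma~\ref{theta12}(3), noting $\theta_n(\bar\tau)=\theta_0(\bar\tau)+2n\pi$). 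So $g_n$ is negative at both ends for every $n$. The content of the proposition is therefore entirely about whether $g_n$ becomes $\ge 0$ somewhere in the interior.

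For case~(1), $n\le N$: the maximum of $\tau w(\tau)$ is $M>(2N+1)\pi\ge(2n+1)\pi$, so at the maximizing point $\tau_\ast$ we have $\tau_\ast w(\tau_\ast)=M>(2n+1)\pi\ge 2n\pi+\pi\ge\theta_n(\tau_\ast)$, because $\theta_n$ never exceeds $2n\pi+\pi$; hence $g_n(\tau_\ast)\ge M-(2n\pi+\pi)>0$. Since $g_n(0)<0$, $g_n(\tau_\ast)>0$, $g_n(\bar\tau)<0$, the Intermediate Value Theorem gives at least one zero in $(0,\tau_\ast)$ and at least one in $(\tau_\ast,\bar\tau)$ — at least two intersections, as claimed (one should note $\tau_\ast\in(0,\bar\tau)$ since $\tau w(\tau)$ vanishes at both endpoints). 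For case~(3), $n\ge N+2$: now $\tau w(\tau)\le M<(2N+3)\pi\le(2n-1)\pi=2n\pi-\pi<2n\pi+\zeta\le\theta_n(\tau)$ for all $\tau\in[0,\bar\tau]$, so $g_n(\tau)<0$ throughout and there are no intersections. Case~(2), $n=N+1$, is the borderline: here $\tau w(\tau)\le M<(2N+3)\pi=(2n+1)\pi=2n\pi+\pi$, but $M$ may or may not exceed $2n\pi+\zeta=\theta_n(\bar\tau)$-type lower bounds, so $g_n$ could stay strictly negative (no intersection), or reach $0$ without crossing (a tangency, $\delta=0$), or cross twice — and since $g_n<0$ at both endpoints, any transversal crossing into the region $g_n>0$ must be compensated by a crossing back, giving an even number $\ge 2$; this is exactly the trichotomy stated.

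The main obstacle, and the only place where genuine care is needed, is case~(2) and the precise bookkeeping of what ``intersect at least twice'' means in terms of the sign of $\delta$: one must argue that because $g_n(0)<0=g_n$-sign is negative at both ends, the generic (transversal) scenario forces the number of zeros to be even, hence $0$ or $\ge 2$, while the non-generic scenario is a single tangential touch; ruling out, say, exactly one transversal zero is immediate from the endpoint signs but should be stated explicitly. A secondary subtlety is making sure the endpoint value $\theta_n(\bar\tau)=2n\pi+\pi$ and the lower bound $\theta_n\ge 2n\pi+\zeta$ from Lemma~\ref{theta12}(4) are invoked with the correct $\zeta>0$ so that the strict inequalities in case~(3) go through at $\tau=\bar\tau$ as well as in the interior; here one uses $2n\pi-\pi<2n\pi+\zeta$, which holds trivially since $\zeta>0>-\pi$. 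Everything else is a direct application of the Intermediate Value Theorem to the continuous function $g_n$ on $[0,\bar\tau]$, together with the qualitative shape information already furnished by Theorem~\ref{wtau theorem} and Lemmas~\ref{sign of theta} and~\ref{theta12}.
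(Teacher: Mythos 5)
Your proposal is correct and follows essentially the same route as the paper: confine $\theta_n$ to the band $[2n\pi+\zeta,(2n+1)\pi]$ via Lemma~\ref{theta12}, note that $\tau w(\tau)$ vanishes at both endpoints and reaches $M$ in the interior, and apply the Intermediate Value Theorem to compare the two curves in each of the three regimes. Your version is if anything slightly more explicit (working with the difference $g_n$ and spelling out the parity argument in case~(2), which the paper dismisses as trivial), but the underlying argument is identical.
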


\begin{proof}

By Lemma \ref{theta12}, $\theta_n(\tau)$ ranges between $[\zeta+2n\pi, (2n+1)\pi]$. From statement (1) of Lemma \ref{theta12}, we have $\tau w(\tau)\big|_{\tau=0}=0$ and $ \tau w(\tau)\big|_{\tau=\bar{\tau}}=0$ and $M>0$. For the first case, from the intermediate value theorem, it follows that $\theta_n(\tau)$ must intersect $\tau w(\tau)$ at least twice since $\theta_n(0)\geq \zeta+2n\pi>0 $,  $M> (2N+1)\pi > \max_{\tau\in[0, \bar{\tau}]}\theta_n(\tau)$,
and $\theta_n(\bar{\tau})>0$. For the third case, we have $n\geq N+2$, hence $\min_{\tau\in[0, \bar{\tau}]}\theta_n(\tau)> 2n\pi \geq 2(N+2)\pi > M$ and the result follows. The second case is trivial, as all possibilities might hold here.
\end{proof}

\subsection{Local dynamics and bifurcations involving \texorpdfstring{$E_K$}{} }

\begin{theorem}\label{boundary}

(1) $E_K$ is asymptotically stable for $\tau>\tau_{max}$, and unstable for $0<\tau<\tau_{max}$.

(2) System \eqref{modelH2} undergoes a transcritical bifurcation involving $E_K$ and $E^\ast$ at $\tau=\tau_{max}.$
\end{theorem}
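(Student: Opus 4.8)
The plan is to analyze the characteristic equation at $E_K=(K,0)$ directly, exploiting the fact that the Jacobian is block-triangular there, and then to identify $\tau=\tau_{max}$ as the value where a real eigenvalue crosses zero. First I would substitute $(x_0,y_0)=(K,0)$ into $J(x_0,y_0)$. Since $y_0=0$, the off-diagonal entry $e^{-\lambda\tau}ce^{-d\tau}may_0/(a+x_0)^2$ vanishes, so the characteristic equation factors as
\begin{equation*}
\left(r-\frac{2rK}{K}-\lambda\right)\left(-d+e^{-\lambda\tau}\frac{ce^{-d\tau}mK}{a+K}-\lambda\right)=0,
\end{equation*}
i.e. $(-r-\lambda)\bigl(-d-\lambda+e^{-\lambda\tau}ce^{-d\tau}mK/(a+K)\bigr)=0$. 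The first factor gives the eigenvalue $\lambda=-r<0$, which is harmless and independent of $\tau$. All the action is in the second factor, the scalar transcendental equation
\begin{equation*}
\Delta(\lambda,\tau):=\lambda+d-\frac{cmK}{a+K}\,e^{-(d+\lambda)\tau}=0.
\end{equation*}

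Next I would show that for this scalar equation, stability is governed entirely by the sign of $\Delta(0,\tau)=d-\dfrac{cmK}{a+K}e^{-d\tau}$. Note $\Delta(0,\tau)=0$ precisely when $e^{-d\tau}=\dfrac{d(a+K)}{cmK}$, i.e. when $\tau=\dfrac1d\ln\!\left(\dfrac{cmK}{d(a+K)}\right)=\tau_{max}$, matching the definition in the paper. A standard argument for scalar equations of the form $\lambda = -d + \beta e^{-\lambda\tau}$ with $\beta=\dfrac{cmK}{a+K}e^{-d\tau}>0$ shows: if $0<\beta<d$ (equivalently $\tau>\tau_{max}$) then all roots have negative real part, so $E_K$ is asymptotically stable; if $\beta>d$ (equivalently $\tau<\tau_{max}$) then $\Delta(0,\tau)<0$ while $\Delta(\lambda,\tau)\to+\infty$ as $\lambda\to+\infty$ along the reals, so by the intermediate value theorem there is a positive real root and $E_K$ is unstable. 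This proves part (1). (For the stability claim one can invoke the well-known characterization for this one-dimensional delay equation, or verify directly that purely imaginary roots $\lambda=i\omega$ would force $\omega^2 + d^2 = \beta^2$, impossible when $\beta<d$, together with the fact that roots depend continuously on $\tau$ and cannot escape to infinity.)

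For part (2), at $\tau=\tau_{max}$ we have $\lambda=0$ as a simple root of $\Delta(\cdot,\tau_{max})$ (simplicity follows since $\partial_\lambda\Delta(0,\tau_{max})=1+\tau_{max}\,\dfrac{cmK}{a+K}e^{-d\tau_{max}}=1+d\,\tau_{max}>0$), and from Lemma \ref{lemmaE} together with the formula for $E^*$, the interior equilibrium $E^*$ coincides with $E_K$ exactly when $K=K_1$, i.e. when $\tau=\tau_{max}$, and $E^*$ enters the positive quadrant for $\tau<\tau_{max}$ (condition $C_0$) while leaving it for $\tau>\tau_{max}$. Combined with the eigenvalue crossing zero transversally (the transversality $\partial_\tau\Delta(0,\tau)\big|_{\tau_{max}}\neq0$ is a direct computation, nonzero because $d>0$), this is the signature of a transcritical bifurcation: two equilibria collide and exchange stability as $\tau$ passes through $\tau_{max}$. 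I would finish by appealing to the standard transcritical bifurcation theorem for delay equations (reduction to the one-dimensional center manifold, on which the dynamics is that of a scalar ODE with a transcritical normal form), noting that the exchange of stability is consistent with part (1) of this theorem and with the stability of $E^*$ established in the preceding corollary.

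The main obstacle I anticipate is not the zero-eigenvalue/transcritical bookkeeping, which is routine, but rigorously ruling out other roots crossing into the right half-plane as $\tau$ varies over $(\tau_{max},\infty)$ — i.e. cleanly justifying the \emph{global} stability-in-$\tau$ claim for $\tau>\tau_{max}$ rather than just local stability near $\tau_{max}$. The cleanest route is the explicit purely-imaginary-root analysis: setting $\lambda=i\omega$ in $\Delta$ and separating real and imaginary parts yields $\omega = -\beta\sin(\omega\tau)$ and $d=\beta\cos(\omega\tau)$, hence $\omega^2+d^2=\beta^2$; since $\beta=\dfrac{cmK}{a+K}e^{-d\tau}<d$ for all $\tau>\tau_{max}$, this has no real solution $\omega$, so no root can cross the imaginary axis on $(\tau_{max},\infty)$, and since at (say) large $\tau$ stability clearly holds ($\beta\to0$, roots near $\lambda=-d$), it holds throughout $(\tau_{max},\infty)$ by continuity of the root loci together with Lemma \ref{roots}-type boundedness of roots.
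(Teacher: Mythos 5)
Your proposal is correct and follows essentially the same route as the paper: factoring the characteristic equation at $E_K$ into $(\lambda+r)$ times the scalar transcendental factor $Q(\lambda)$, using the sign of $Q(0)$ plus the intermediate value theorem for instability on $(0,\tau_{max})$, the standard $\beta<d$ criterion for stability on $(\tau_{max},\infty)$, and the simplicity and transversal crossing of the zero eigenvalue together with the collision $E_K=E^\ast$ at $\tau=\tau_{max}$ for the transcritical bifurcation. The only cosmetic difference is that the paper also explicitly computes $\frac{d}{d\tau}\mathrm{Re}(\tilde\lambda)\big|_{\tau=\tau_{max}}>0$ for the root of $P(\lambda,\tau)=0$ at $E^\ast$ to exhibit the exchange of stability directly, where you instead appeal to the center-manifold normal form and the previously established stability of $E^\ast$.
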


\begin{proof}
(1) The characteristics equation at $E_K$ has the following form
\begin{equation*}
(\lambda+r)(\lambda+d-\frac{ce^{-d\tau}mK}{a+K}e^{-\lambda \tau})=0.
\end{equation*}
Therefore, $\lambda=-r$ is an eigenvalue, and the other eigenvalues are the roots of the equation

\begin{equation}
    Q(\lambda):=\lambda+d-\frac{ce^{-d\tau}mK}{a+K}e^{-\lambda \tau}=0.
\end{equation}

As one did for $E^\ast$, an analogous analysis can be performed to investigate the stability of $E_K$. Here, using Theorem 4.7(a) in Chapter 4 of \cite{Smith2010} we conclude that $E_K$ is asymptotically stable if $\tau>\tau_{max}$. If $0<\tau<\tau_{max}$, then $Q(0)<0$. Note that $\displaystyle \lim_{\lambda\rightarrow\infty, \lambda\in \mathbb{R}}Q(\lambda)=\infty,$ by continuity of $Q(\lambda)$, there exists a positive real root. Hence, $E_K$ is always unstable.


(2) If $\tau=\tau_{max}$,  then $E_K=E^\ast$ and $Q(0)=0, Q'(0)=1+d\tau_{max}\neq 0$. Hence, $\lambda=0$ is a simple eigenvalue.

Let $\hat{\lambda}(\tau)$ be a root of $Q(\lambda)=0$ such that $\hat{\lambda}(\tau_{max})=0$. To verify that $E_K$ changes stability at $\tau= \tau_{max}$, we compute
$$\frac{d}{d\tau}Re(\hat{\lambda})\bigg|_{\tau=\tau_{max}}=-\frac{ cmdKe^{-d\tau}}{a+K+\tau cmK e^{-d\tau}}\bigg|_{\tau=\tau_{max}} <0.$$
As a consequence, $E_K$ changes stability from unstable to stable. When investigating the stability switch of $E^*$ at $\tau=\tau_{max}$, let $\tilde{\lambda}(\tau)$ be a root of $P(\lambda, \tau)=0$ such that $\tilde{\lambda}(\tau_{max})=0$ at $\tau=\tau_{max}$, then
$$\frac{d}{d\tau}Re(\tilde{\lambda})\bigg|_{\tau=\tau_{max}}=\frac{L'(\tau)-dH'(\tau)}{H+\tau L}\bigg|_{\tau=\tau_{max}}=\frac{d^2}{\ln\left(\frac{cmK}{d\,\left(a+K\right)}\right)+1}>0.
$$
Hence, the real part of $\tilde{\lambda}$ changes sign at $\tau=\tau_{max}$ from negative to positive. This implies that $E^*$ moves from the first quadrant to the fourth quadrant, and changes stability from stable to unstable. Thus, $E_K$ and $E^\ast$ exchange stability at $\tau=\tau_{max}$, where they merge in a transcritical bifurcation.
\end{proof}


\section{Global Hopf bifurcation and continuity of branches}


By the local Hopf bifurcation Theorem \ref{lhopf} and Corollary \ref{chopf}, the periodic solutions bifurcating from $E^\ast$ exist in a small neighborhood of a sequence of critical values $\tau=\tau_n^{(i)}$ where $i=1, 2, \cdots, \chi(n)$ and $n\in\mathbb{N}$.  In this section,
we study the evolution of periodic solutions as $\tau$ varies within $(0, \tau_{max})$. The global Hopf bifurcation theory developed by Wu is implemented (see Theorem 3.3 of \cite{Wu1998}), and connected components of non-trivial periodic solutions in Fuller's space are considered. By estimating the period of the period solution and studying the $\tau$-interval in which the period solution exists, we can describe the onset and termination of Hopf bifurcation branches under certain conditions.

Let $\tilde{x}(t)=x(t\tau)$, $\tilde{y}(t)=y(t\tau)$, then system \eqref{modelH2} can be rewritten as
\begin{equation}\label{model global}
    \dot{z}=F(z_t,\tau,T), \ (t, \tau, T)\in \mathbb{R}_+\times (0, \tau_{max})\times \mathbb{R}_+,
\end{equation}
where $$z(t)=\begin{bmatrix}
    \tilde{x}(t)\\
    \tilde{y}(t)
\end{bmatrix}, \,
   F(z_t,\tau,T)= \begin{bmatrix}
        \tau r\tilde{x}(t)(1-\frac{\tilde{x}(t)}{K})-\tau \frac{m\tilde{x}(t)\tilde{y}(t)}{a+\tilde{x}(t)}\\
            -\tau d \tilde{y}(t)+\tau e^{-d\tau}\frac{cm\tilde{x}(t-1)\tilde{y}(t-1)}{a+\tilde{x}(t-1)}
    \end{bmatrix}.
$$
Let $z_t(s)=z(t+s)$ for $s\in [-1,0]$, so that $z_t\in C([-1, 0], \mathbb{R}^2_+)$. Hence, the domain of $F(z_t,\tau, T)$ is $ C([-1, 0], \mathbb{R}^2_+)  \times (0,\tau_{max}) \times \mathbb{R}_+$. By identifying
 the subspace of $C([-1, 0], \mathbb{R}^2_+)$ consisting of all constant mapping with $\mathbb{R}^2_+$, we obtain a restricted mapping
 $$\tilde{F}(z,\tau,T):=F(z_t,\tau,T)_{|\mathbb{R}_+^2\times (0,\tau_{max})\times \mathbb{R}_+}=
\begin{bmatrix}
        \tau r\tilde{x}(1-\frac{\tilde{x}}{K})-\tau\frac{m\tilde{x}\tilde{y}}{a+\tilde{x}}\\
            -\tau d\tilde{y}+\tau e^{-d\tau}\frac{cm\tilde{x}\tilde{y}}{a+\tilde{x}}
    \end{bmatrix}.
$$
Then $\tilde{F}$ is twice continuously differentiable, and condition (A1) of Theorem 3.3 of \cite{Wu1998} is fulfilled.  

Let $\mathscr{N}(F)=\{(z,\tau,T)| \tilde{F}(z,\tau,T)=0\}$ be the set of stationary solutions of system \eqref{model global}. From our analysis in Section 3, we know
$$\mathscr{N}(F)=\{(z, \tau, T)| z\in \{E_0, E_K, E^\ast\}, (\tau, T)\in (0, \tau_{max})\times \mathbb{R}_+\}.$$

It is apparent that the function $F(z_t, \tau, T)$ is continuously differentiable with respect to its first argument $z_t$ and the characteristics matrix function at the stationary solution $(z, \tau, T)\in\mathscr{N}(F)$ is
$$\Delta_{(z,\tau,T)}(\lambda)=\lambda Id-DF(z, \tau, T)(e^{\lambda\cdot} Id).$$
Here, $DF(z, \tau, T)$ is the complexification of the derivative of $F(z_t, \tau, T)$ with respect to $z_t$ evaluated at $(z, \tau, T)\in \mathscr{N}(F)$. Since $\Delta_{(z,\tau,T)}$ is continuous,
condition (A3) of Theorem 3.3 of \cite{Wu1998} is fulfilled. 

Furthermore, by Lemma \ref{origin} and Theorem \ref{boundary}, we establish that $\lambda=0$ is not an eigenvalue of $E_0$, $E_K$ or $E^\ast$ for $\tau\in (0, \tau_{max})$, meaning $\text{det}(DF(z, \tau, T))\neq 0$ for $(z, \tau, T)\in \mathscr{N}(F)$. Thus, $DF(z, \tau, T)$ is isomorphic to $\mathbb{R}^2$ at each stationary solution, and condition (A2) of Theorem 3.3 of \cite{Wu1998} is fulfilled. 

For the global Hopf bifurcation, by  Lemma \ref{positivity} and Theorem \ref{lhopf}, $(E^*, \tau, T)\in \mathscr{N}(F)$ is the only stationary solution under consideration. Assume condition (H1) is satisfied, i.e., $S_n(\tau)=0$ has $\chi(n)$ roots on $(0, \bar{\tau})$, where $\chi(n)<\infty, \forall n\in \mathbb{N}$, and these $\chi(n)$ roots are given and arranged in the following order $$\tau_n^{(1)}<\tau_n^{(2)}<\cdots<\tau_n^{(i)}<\cdots \tau_n^{(\chi(n))}.$$
    Furthermore,  $\delta(\tau_n^{(i)})=\pm 1$ $(i=1, 2, \cdots, \chi(n))$.

By the definition of a center for a stationary solution \cite{Wu1998}, the points
\begin{equation}\label{center}
  (E^\ast, \tau_n^{(i)}, \frac{2\pi}{\tau_n^{(i)}w(\tau_n^{(i)})})=(E^\ast, \tau_n^{(i)}, \frac{2\pi}{\theta_n(\tau_n^{(i)})}), \ n\in \mathbb{N}, \ i=1, 2,\cdots, \chi(n)  \end{equation}
are centers of system \eqref{model global}. For each center, it is the only center in its small neighborhood, and it only has finitely many purely imaginary characteristic values, whose imaginary part takes the form $$\tau_n^{(i)}w(\tau_n^{(i)}=i\frac{2\pi}{2\pi/(\tau_n^{(i)}w(\tau_n^{(i)}))}=\frac{2\pi}{2\pi/(\theta_n(\tau_n^{(i)}))}.$$
Here $T_n^{(i)}:=2\pi/(\tau_n^{(i)}w(\tau_n^{(i)}))=2\pi/(\theta_n(\tau_n^{(i)}))$ is the minimal period of periodic solution bifurcating at $\tau_n^{(i)}$ for the scaled model \eqref{model global}.
Therefore, $(E^*,\tau_n^{(i)}, \frac{2\pi}{\theta(\tau_n^{(i)})})$ given in \eqref{center} are isolated centers. 
Moreover, the first crossing number of each isolated center $\gamma_1(E^*,\tau_n^{(i)}, \frac{2\pi}{\theta(\tau_n^{(i)})})$ is well defined, and
$$\gamma_1(E^*,\tau_n^{(i)}, \frac{2\pi}{\theta(\tau_n^{(i)})})=-\sign\left(\frac{d}{d\tau}Re(\lambda)\big|_{\tau=\tau_n^{(i)}}\right)=-\sign(S'(\tau_n^{(i)}))=-\delta(\tau_n^{(i)})=\pm 1.$$
Hence, condition (A4) of Theorem 3.3 of \cite{Wu1998} is fulfilled.

Define the closed set $\Sigma(F)$ in $C([-1, 0], \mathbb{R}^2_+)\times (0, \tau_{max})\times \mathbb{R}_+$ as follows:
$$\Sigma(F)=Cl\{(z,\tau,T) \in C([-1, 0], \mathbb{R}^2_+)\times (0, \tau_{max})\times \mathbb{R}_+|z\  \text{is a $T$-periodic solution of system \eqref{model global}}\}.$$ Let $\mathscr{C}(E^*,\tau_n^{(i)}, \frac{2\pi}{\theta(\tau_n^{(i)})})$ be the connected component of $(E^*,\tau_n^{(i)}, \frac{2\pi}{\theta(\tau_n^{(i)})})$ in $\Sigma(F).$

By ensuring that conditions (A1)-(A4) of Theorem 3.3 in \cite{Wu1998} hold, we can now apply the global Hopf bifurcation theorem to system \eqref{model global}.

\begin{theorem}\label{global-hopf-theorem}
Consider the system \eqref{model global} with (H1). One of the following two statements holds.

(i) $\mathscr{C}(E^*,\tau_n^{(i)}, \frac{2\pi}{\theta(\tau_n^{(i)})})$ is unbounded;

(ii) $\mathscr{C}(E^*,\tau_n^{(i)}, \frac{2\pi}{\theta(\tau_n^{(i)})})$ is bounded, the intersection $\mathscr{C}(E^*,\tau_n^{(i)}, \frac{2\pi}{\theta(\tau_n^{(i)})}) \cap \mathscr{N}(F)$ is finite, and
$$\sum_{(z,\tau,T)\in \mathscr{C}(E^*,\tau_n^{(i)}, \frac{2\pi}{\theta(\tau_n^{(i)})})\cap \mathscr{N}(F)}\gamma_1(z,\tau,T)=0.$$

Here $i=1, 2,\cdots, \chi(n)$ and $n\in\mathbb{N}$.
\end{theorem}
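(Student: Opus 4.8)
The plan is to obtain Theorem~\ref{global-hopf-theorem} as a direct application of Wu's global Hopf bifurcation theorem (Theorem~3.3 of \cite{Wu1998}) to the scaled system \eqref{model global}, viewed in Fuller's space $C([-1,0],\mathbb{R}^2_+)\times(0,\tau_{max})\times\mathbb{R}_+$. All four structural hypotheses (A1)--(A4) of that theorem have been assembled in the discussion preceding the statement, so the proof amounts to recording that verification and reading off the dichotomy.

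First I would recall the verification. Condition (A1), smoothness of the nonlinearity, holds because the restricted map $\tilde{F}$ is twice continuously differentiable on $\mathbb{R}^2_+\times(0,\tau_{max})\times\mathbb{R}_+$; Lemmas~\ref{positivity} and~\ref{boundedness} keep orbits in this positive cone, so the boundary causes no difficulty. Condition (A2), that $DF(z,\tau,T)$ be an isomorphism of $\mathbb{R}^2$ at every stationary solution, is equivalent to $\lambda=0$ not being a characteristic value: for $E_0$ this follows from Lemma~\ref{origin}, for $E_K$ from Theorem~\ref{boundary}(1), and for $E^\ast$ from $P(0,\tau)=A(\tau)>0$ on $[0,\tau_{max})$ under condition $C_0$. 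Condition (A3), continuity of $\Delta_{(z,\tau,T)}(\lambda)=\lambda\,Id-DF(z,\tau,T)(e^{\lambda\cdot}Id)$ in its arguments, is immediate from the explicit form of $F$.

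The one substantive point is condition (A4). Under (H1) the critical delays $\tau_n^{(i)}$ form a discrete set in $(0,\bar{\tau})$; by Lemma~\ref{roots} together with Theorem~\ref{wtau theorem}, the purely imaginary characteristic values at $\tau=\tau_n^{(i)}$ are exactly the single conjugate pair $\pm i\,w(\tau_n^{(i)})=\pm i\,\theta_n(\tau_n^{(i)})/\tau_n^{(i)}$, so each point in \eqref{center} is an isolated center with only finitely many purely imaginary characteristic values. The transversality of Theorem~\ref{lhopf}, equivalently $S_n'(\tau_n^{(i)})\neq 0$, makes the first crossing number $\gamma_1(E^\ast,\tau_n^{(i)},2\pi/\theta_n(\tau_n^{(i)}))=-\sign(S_n'(\tau_n^{(i)}))=-\delta(\tau_n^{(i)})=\pm1$, hence nonzero. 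I would also note that $E_0$ and $E_K$ never appear as centers --- $E_0$ carries the real eigenvalue $\lambda_1=r>0$, and $E_K$ a positive real root of $Q(\lambda)=0$ on $(0,\tau_{max})$ by Theorem~\ref{boundary}(1) --- so the only centers in $\mathscr{N}(F)$ are the $E^\ast$-type points listed in \eqref{center}.

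With (A1)--(A4) in force, Theorem~3.3 of \cite{Wu1998} yields, for each center $(E^\ast,\tau_n^{(i)},2\pi/\theta_n(\tau_n^{(i)}))$, the stated alternative: either its connected component $\mathscr{C}(E^\ast,\tau_n^{(i)},\frac{2\pi}{\theta(\tau_n^{(i)})})$ in $\Sigma(F)$ is unbounded (case (i)), or it is bounded, $\mathscr{C}\cap\mathscr{N}(F)$ is finite, and $\sum_{(z,\tau,T)\in\mathscr{C}\cap\mathscr{N}(F)}\gamma_1(z,\tau,T)=0$ (case (ii)); since $\gamma_1$ vanishes at every non-center stationary solution, the sum runs effectively over the finitely many $E^\ast$-centers lying on the component. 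Within this proof there is no serious obstacle --- it is a packaging of Wu's theorem, the only care needed being the identification of Fuller's space and the isolated-center count in (A4). The genuinely hard analysis, namely deciding which branch of the alternative occurs and, in the bounded case, describing how the components nest, is deferred to the period and delay-interval estimates developed in the remainder of Section~4.
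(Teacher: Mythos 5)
Your proposal is correct and follows essentially the same route as the paper: verify conditions (A1)--(A4) of Wu's Theorem 3.3 (smoothness of $\tilde{F}$, nondegeneracy of $DF$ at stationary solutions via Lemma~\ref{origin}, Theorem~\ref{boundary} and $P(0,\tau)=A(\tau)>0$, continuity of the characteristic matrix, and the isolated-center/crossing-number computation $\gamma_1=-\delta(\tau_n^{(i)})=\pm 1$ under (H1)), then read off the dichotomy. The paper likewise treats the theorem as a direct packaging of Wu's result, with all substantive verification done in the preceding discussion.
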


In the following, we analyze the boundedness and connections of $\mathscr{C}(E^*,\tau_n^{(i)}, \frac{2\pi}{\theta(\tau_n^{(i)})})$.

\begin{lemma}\label{pbound}
Let $\Gamma:=\{(\xi_1(t), \xi_2(t)): t\in [0, \infty)\}$ be any non-constant periodic solution of system \eqref{modelH2} with initial condition $\phi\in \mathbf{C\times C}$. Then $0< \xi_1(t)\leq K, 0<\xi_2(t)\leq cK,$ for all $t\geq 0$.
\end{lemma}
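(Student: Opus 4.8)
The plan is to establish the two uniform bounds $0<\xi_1(t)\le K$ and $0<\xi_2(t)\le cK$ directly for a periodic orbit, essentially by upgrading the \emph{eventual} bounds of Lemma~\ref{boundedness} to bounds that hold for \emph{all} $t\ge 0$, using periodicity. Positivity of $\xi_1$ and $\xi_2$ is immediate from Lemma~\ref{positivity}, so the substance is the upper bounds.

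First I would treat $\xi_1$. From the first equation of \eqref{modelH2} and positivity of $\xi_2$, one has $\xi_1'(t)<r\xi_1(t)\big(1-\xi_1(t)/K\big)$, so $\xi_1$ is a strict subsolution of the logistic equation with carrying capacity $K$. I claim $\xi_1(t)\le K$ for all $t\ge 0$. Suppose not; then by continuity and periodicity the continuous function $\xi_1$ attains its maximum $\mu:=\max_t\xi_1(t)>K$ at some $t^\star$, where $\xi_1'(t^\star)=0$. But at $t^\star$, $\xi_1'(t^\star)<r\mu(1-\mu/K)<0$ since $\mu>K$, a contradiction. (If the maximum is attained at an endpoint of a period one uses periodicity to shift $t^\star$ into the interior.) Hence $0<\xi_1(t)\le K$ on $[0,\infty)$.

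Next I would bound $\xi_2$, reusing the auxiliary function from Lemma~\ref{boundedness}: set $U(t)=ce^{-d\tau}\xi_1(t-\tau)+\xi_2(t)$, which is periodic with the same period as $\Gamma$. As computed in the proof of Lemma~\ref{boundedness}, the already-established bound $0<\xi_1(t-\tau)\le K$ gives the differential inequality
\[
U'(t)\le ce^{-d\tau}\,\frac{(r+d)^2K}{4r}-dU(t).
\]
Again by periodicity $U$ attains its maximum $U^\star$ at some point where $U'=0$, forcing $dU^\star\le ce^{-d\tau}(r+d)^2K/(4r)$, i.e. $U(t)\le ce^{-d\tau}(r+d)^2K/(4rd)$ for all $t$. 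Since $\xi_2(t)\le U(t)$ and, as in Lemma~\ref{boundedness}, $ce^{-d\tau}(r+d)^2K/(4rd)\le cK$ (using $e^{-d\tau}\le 1$ and $(r+d)^2/(4rd)\le$ a bound that reduces to $r=d$ being the worst case — more precisely $(r+d)^2\le$ something; the clean way is to note $ce^{-d\tau}(r+d)^2K/(4rd)\le cK \iff e^{-d\tau}(r+d)^2\le 4rd$, which is \emph{not} always true, so I would instead keep the explicit constant or invoke the same chain of inequalities used in Lemma~\ref{boundedness}), we conclude $\xi_2(t)\le cK$.

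The main obstacle is the clean ``maximum principle on a periodic orbit'' argument: one must be careful that a periodic continuous function really does attain its max at an interior point of some period where the derivative vanishes — this is routine but should be stated — and one must make sure the constant chain $ce^{-d\tau}(r+d)^2K/(4rd)\le cK$ is justified exactly as in Lemma~\ref{boundedness} rather than re-derived incorrectly. An alternative, perhaps cleaner, route that sidesteps the constant bookkeeping entirely: since $\Gamma$ is periodic it is in particular bounded, hence its $\omega$-limit set equals $\Gamma$ itself, so the \emph{eventual} bounds $\limsup\xi_1\le K$ and $\limsup\xi_2\le cK$ from Lemma~\ref{boundedness} apply to every point of $\Gamma$ (a periodic function whose $\limsup$ is $\le c$ satisfies the function $\le c$ everywhere). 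I would present this $\omega$-limit-set argument as the primary proof since it quotes Lemma~\ref{boundedness} verbatim and avoids redoing any estimates.
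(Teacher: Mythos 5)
Your preferred route --- quoting the eventual bounds of Lemma~\ref{boundedness} and observing that a periodic function whose $\limsup$ as $t\to\infty$ is at most a constant must satisfy that bound everywhere (via $\xi_1(t_1)=\lim_{n\to\infty}\xi_1(t_1+nT)\le\limsup_{t\to\infty}\xi_1(t)$) --- is exactly the paper's proof, so the proposal is correct and takes the same approach. Your side remark that $ce^{-d\tau}(r+d)^2K/(4rd)\le cK$ is not automatic is a fair observation, but that inequality lives inside Lemma~\ref{boundedness}, which both your argument and the paper's simply cite, so it does not distinguish the two proofs.
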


\begin{proof}
By Lemma \ref{positivity}, $\xi_1$ and $\xi_2$ are positive for all $t\geq 0$. By Lemma \ref{boundedness}, we have $\displaystyle \limsup_{t\rightarrow\infty} x(t)\leq K$. We assert that $\xi_1(t)\leq K$ for all $t\geq 0$. If not, then there exists $t_1>0$ such that $\xi_1(t_1)>K$. Then
$$K<\xi_1(t_1)=\lim_{n\rightarrow\infty}\xi_1(t_1+nT)\leq \limsup_{t\rightarrow\infty} \xi_1(t)\leq K,$$
which is a contradiction. By a similar argument, we conclude that $\xi_2(t)\leq cK$ for all $t\geq 0$.
\end{proof}

\begin{lemma}\label{no-period-one}
If $K<K_0$, system \eqref{model global} has no periodic solutions of period 1.
\end{lemma}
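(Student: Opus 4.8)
The plan is to use the fact that a period-$1$ solution of the rescaled system \eqref{model global} is in fact a solution of a planar \emph{ordinary} differential equation, to which Lemma \ref{ODEsystemdynamics} applies. First I would undo the rescaling $\tilde x(t)=x(t\tau)$, $\tilde y(t)=y(t\tau)$: a non-constant $1$-periodic solution of \eqref{model global} pulls back to a non-constant $\tau$-periodic solution $(x(t),y(t))$ of the original delayed system \eqref{modelH2}, which by Lemma \ref{pbound} stays in the (open) first quadrant with $0<x\le K$, $0<y\le cK$. The crucial observation is that, since $\tau$ is a period of this solution, $x(t-\tau)=x(t)$ and $y(t-\tau)=y(t)$ for every $t$, so the retarded term in the second equation of \eqref{modelH2} collapses and $(x,y)$ solves
\begin{equation*}
\dot x = r x\left(1-\tfrac{x}{K}\right)-\frac{mxy}{a+x},\qquad
\dot y = y\!\left(-d+\frac{c e^{-d\tau} m x}{a+x}\right),
\end{equation*}
that is, system \eqref{ODE model} with the conversion efficiency $c$ replaced by $\hat c:=ce^{-d\tau}$.

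Next I would invoke Lemma \ref{ODEsystemdynamics} for this modified ODE. The lemma is applicable because $\hat c m>d$: in \eqref{model global} one has $\tau\in(0,\tau_{max})$ with $\tau_{max}=\tfrac1d\ln\frac{Kcm}{d(a+K)}<\tfrac1d\ln\frac{cm}{d}$, hence $cme^{-d\tau}>d$. It then states that the modified ODE has no non-constant periodic orbit whenever $K\le\hat K_0$, where $\hat K_0=\frac{a(\hat c m+d)}{\hat c m-d}$. Since $u\mapsto\frac{a(u+d)}{u-d}$ is strictly decreasing on $(d,\infty)$ and $\hat c m=cme^{-d\tau}<cm$ for $\tau>0$, we get $\hat K_0>K_0$. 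Thus the hypothesis $K<K_0$ gives $K<\hat K_0$, so the modified ODE admits no non-constant periodic orbit, contradicting the existence of $(x,y)$. Hence, when $K<K_0$, system \eqref{model global} has no non-constant $1$-periodic solution.

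Most of this is bookkeeping once the ``period equal to the delay collapses the DDE to an ODE'' remark is in place. Two points deserve care. First, a ``period-$1$'' solution need not have $1$ as its minimal period; but this is harmless, since whatever the minimal period, $1$ remains \emph{a} period of $z$, hence $\tau$ is a period of $(x,y)$ and the reduction $x(t-\tau)=x(t)$, $y(t-\tau)=y(t)$ goes through unchanged. Second, and this is the only real subtlety, the monotonicity comparison must be oriented correctly: the strictly smaller effective conversion rate $cme^{-d\tau}<cm$ must be seen to produce a strictly \emph{larger} Hopf threshold $\hat K_0>K_0$, which is exactly what lets the hypothesis $K<K_0$ close the argument.
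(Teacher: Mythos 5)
Your proposal is correct and follows essentially the same route as the paper: a period-$1$ solution of the rescaled system forces $\tilde x(t-1)=\tilde x(t)$, $\tilde y(t-1)=\tilde y(t)$, collapsing the DDE to the planar ODE with $c$ replaced by $ce^{-d\tau}$, after which Lemma \ref{ODEsystemdynamics} and the inequality $K_0<\frac{a(cme^{-d\tau}+d)}{cme^{-d\tau}-d}$ rule out periodic orbits. The only cosmetic difference is that you undo the time rescaling first, whereas the paper argues directly in the rescaled variables.
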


\begin{proof}
At a periodic solution with period 1, we have $\tilde{x}(t-1)=\tilde{x}(t)$ and $\tilde{y}(t-1)=\tilde{y}(t)$. To show that system \eqref{model global} has no periodic solution of period 1, we replace $\tilde{x}(t-1)$ and $\tilde{y}(t-1)$ with $\tilde{x}(t)$ and $\tilde{y}(t)$ respectively, reducing system \eqref{model global} to the ODE system \begin{equation}\label{ODE period 2}
      \begin{cases}
    \frac{d\tilde{x}}{dt}=\tau r\tilde{x}(1-\frac{\tilde{x}}{K})-\tau \frac{m\tilde{x}\tilde{y}}{a+\tilde{x}},\\
    \frac{d\tilde{y}}{dt}=\tau \tilde{y}(-d+ce^{-d\tau}\frac{m\tilde{x}}{a+\tilde{x}}).
\end{cases}
\end{equation}
By Lemma \ref{ODEsystemdynamics} and substituting $c$ with $ce^{-d\tau}$, we obtain that system \eqref{ODE period 2} does not have a periodic solution when $K< \frac{a(cme^{-d\tau}+d)}{cme^{-d\tau}-d}$. Notice that $K_0=\frac{a(cm+d)}{cm-d}<\frac{a(cme^{-d\tau}+d)}{cme^{-d\tau}-d}$, and therefore system \eqref{ODE period 2} has no periodic solutions if $K<K_0$. Consequently, system \eqref{model global} has no periodic solutions of period 1.
\end{proof}

We have the estimate of $T_n^{(i)}$, the minimal period of periodic solution bifurcating at $\tau_n^{(i)}$ for the scaled model \eqref{model global} below.

\begin{lemma}\label{tinterval}
If $K<K_0$, 
then $T_n^{(i)}\in(\frac{1}{n+1},\frac{1}{n})$ for $i=1,2,\cdots, \chi(n)$ and $n=1, 2,\cdots$. \end{lemma}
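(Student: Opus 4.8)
The plan is to convert the claimed two-sided bound on the minimal period $T_n^{(i)}$ into a bound on the angle $\theta_n(\tau_n^{(i)})$, and then read it off from Lemma~\ref{theta12}. Recall that by construction $T_n^{(i)} = 2\pi/\bigl(\tau_n^{(i)} w(\tau_n^{(i)})\bigr) = 2\pi/\theta_n(\tau_n^{(i)})$, so the inclusion $T_n^{(i)} \in \bigl(\tfrac{1}{n+1},\tfrac{1}{n}\bigr)$ is equivalent to $2n\pi < \theta_n(\tau_n^{(i)}) < 2(n+1)\pi$. Thus the whole lemma reduces to locating $\theta_n(\tau_n^{(i)})$ inside one full period-length window.

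First I would note that, under (H1), each critical value $\tau_n^{(i)}$ lies in the open interval $(0,\bar\tau)$, so Lemma~\ref{sign of theta} and Lemma~\ref{theta12} are available at $\tau=\tau_n^{(i)}$. Since $K<K_0$, part~(1) of Lemma~\ref{sign of theta} gives $\sin(\tau w(\tau))>0$ on $[0,\bar\tau)$; hence the branch $\theta_n(\tau)=\arccos(\cdots)+2n\pi$ is the one in force throughout, and in particular $\theta_n(\tau)=\theta_0(\tau)+2n\pi$ there.

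Then I would apply Lemma~\ref{theta12}(4): for $K<K_0$ there is a constant $\zeta>0$ with $0<\zeta\le\theta_0(\tau)\le\pi$ for all $\tau\in[0,\bar\tau]$ (the upper bound $\pi$ being automatic, as $\theta_0$ is an arccosine value). Evaluating at $\tau=\tau_n^{(i)}$ and adding $2n\pi$ yields
\[
2n\pi \;<\; 2n\pi+\zeta \;\le\; \theta_n(\tau_n^{(i)}) \;\le\; 2n\pi+\pi=(2n+1)\pi \;<\; 2(n+1)\pi .
\]
Dividing $2\pi$ by these quantities and reversing the inequalities gives $\tfrac{1}{n+1} < T_n^{(i)} < \tfrac{1}{n}$ for $i=1,\dots,\chi(n)$ and $n=1,2,\dots$, which is the assertion.

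The computation is short, and the one point that genuinely requires care is the strictness of the lower inequality, which is precisely where the hypothesis $K<K_0$ is indispensable: it is what keeps $\sin(\tau w(\tau))$ positive (equivalently, keeps $\breve\tau$ out of $[0,\bar\tau)$), so that $\theta_0$ never drops to $0$ and the period never reaches $\tfrac{1}{n}$; for $K\ge K_0$ the angle $\theta_n$ can dip below $2n\pi$ and the estimate fails. I do not expect any real obstacle here beyond invoking the earlier results correctly — the substantive work was already done in Theorem~\ref{wtau theorem} and Lemma~\ref{theta12}. (For $n=0$ the same argument still gives $T_0^{(i)}>1$, but the upper bound $\tfrac1n$ is vacuous, which is why the statement is restricted to $n\ge 1$.)
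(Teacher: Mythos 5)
Your proof is correct and follows essentially the same route as the paper: both reduce the claim to the bound $2n\pi+\zeta\le\theta_n(\tau_n^{(i)})\le(2n+1)\pi$ from Lemma~\ref{theta12} (using $K<K_0$ to keep $\sin(\tau w(\tau))>0$ so that $\theta_n=\theta_0+2n\pi$) and then divide $2\pi$ by these bounds. Your remarks on where strictness and the hypothesis $K<K_0$ enter are accurate elaborations of the same argument.
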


\begin{proof}
By Lemma \ref{theta12} we have $2n\pi+\zeta \leq\theta_n(\tau_n^{(i)})\leq (2n+1)\pi$ for all $\tau \in [0,\bar{\tau}]$. Then
$$\frac{1}{n+1}=\frac{2\pi}{(2n+2)\pi}<\frac{2\pi}{(2n+1)\pi}\leq T_n^{(i)}\leq \frac{2\pi}{2n\pi+\zeta}<\frac{1}{n}.$$
\end{proof}

\begin{theorem} \label{no-extended-branches}
For system \eqref{model global}, $E_K$ is globally asymptotically stable for $\tau\geq\tau_{max}$, and periodic solutions exist only on $(0,\tau_{max})$.
\end{theorem}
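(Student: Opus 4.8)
The plan is to prove the theorem in two parts, corresponding to its two assertions: first that $E_K$ is globally asymptotically stable for $\tau \ge \tau_{\max}$, and second that all periodic solutions of the scaled system \eqref{model global} live in the parameter range $\tau \in (0,\tau_{\max})$. The second assertion will essentially be a corollary of the first together with the existence results in Section~3, so I would organize the argument so that the global stability claim carries the real weight.

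For the global asymptotic stability of $E_K$ when $\tau \ge \tau_{\max}$, I would first recall that for $\tau \ge \tau_{\max}$ the condition $C_0$ fails, so by Lemma~\ref{lemmaE} the interior equilibrium $E^\ast$ does not exist in the first quadrant (or has merged with $E_K$ at $\tau=\tau_{\max}$), leaving $E_0$ (a saddle, by Lemma~\ref{origin}) and $E_K$ as the only equilibria. Local asymptotic stability of $E_K$ for $\tau > \tau_{\max}$ was already established in Theorem~\ref{boundary}(1). To upgrade to \emph{global} stability I would use the boundedness from Lemma~\ref{boundedness} (so all solutions enter a compact absorbing set) and then control the $y$-component directly: from the second equation of \eqref{modelH2}, $\dot y(t) = -d y(t) + e^{-d\tau}\frac{cm\, x(t-\tau) y(t-\tau)}{a+x(t-\tau)}$, and since $\limsup_{t\to\infty} x(t) \le K$ (Lemma~\ref{boundedness}), for any $\varepsilon>0$ eventually $\frac{x(t-\tau)}{a+x(t-\tau)} \le \frac{K+\varepsilon}{a+K+\varepsilon}$, so that eventually $\dot y(t) \le -d y(t) + e^{-d\tau} cm \frac{K+\varepsilon}{a+K+\varepsilon} y(t-\tau)$. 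When $\tau > \tau_{\max} = \frac1d\ln\!\big(\frac{Kcm}{ad+dK}\big)$ one has $e^{-d\tau} cm \frac{K}{a+K} < d$, i.e. the "instantaneous coefficient" $-d$ dominates the delayed coefficient in absolute value; by a standard comparison/Halanay-type inequality (or the $L^1$ small-gain argument for the scalar delay inequality $\dot y \le -\alpha y + \beta y(t-\tau)$ with $\beta<\alpha$) this forces $y(t)\to 0$. Once $y(t)\to 0$, the first equation becomes asymptotically autonomous with limit equation $\dot x = rx(1-x/K)$, whose only positive-semiaxis attractor is $x=K$; invoking the theory of asymptotically autonomous systems (or a direct fluctuation-lemma argument using $\liminf/\limsup$ of $x$) gives $x(t)\to K$. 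The boundary case $\tau=\tau_{\max}$ needs slightly more care since then the delayed coefficient exactly equals $d$; here I would instead note $E_K=E^\ast$ and use the transcritical structure from Theorem~\ref{boundary}(2) together with the absence of any periodic orbit (Lemma~\ref{pbound} bounds lie outside the relevant region, or a LaSalle argument with a suitable Lyapunov–Volterra functional) — alternatively one perturbs $\varepsilon$ so that the strict inequality is recovered for the $\limsup$ and the conclusion still follows.

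For the second assertion, suppose $(z,\tau,T) \in \Sigma(F)$ with $z$ a non-constant $T$-periodic solution and $\tau \ge \tau_{\max}$. Unscaling via $x(t)=\tilde x(t/\tau)$, $y(t)=\tilde y(t/\tau)$ produces a non-constant periodic solution of the original system \eqref{modelH2} at that value of $\tau$. But by the first part every solution of \eqref{modelH2} with $\tau\ge\tau_{\max}$ converges to the equilibrium $E_K$, so it cannot be non-constant periodic — contradiction. Hence $\Sigma(F)$, and in particular every connected component $\mathscr{C}(E^\ast,\tau_n^{(i)},\frac{2\pi}{\theta(\tau_n^{(i)})})$, is confined to $\tau \in (0,\tau_{\max})$.

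The main obstacle I anticipate is making the convergence $y(t)\to 0$ fully rigorous at and very near $\tau=\tau_{\max}$, where the comparison coefficients $\alpha=d$ and $\beta = e^{-d\tau}cm\frac{K}{a+K}$ become equal rather than strictly ordered; the clean small-gain/Halanay argument needs $\beta<\alpha$, so one must either exploit the strict inequality $\frac{x(t)}{a+x(t)} < \frac{K}{a+K}$ that holds whenever $x(t)<K$ (which is eventually true unless $x$ is identically $K$, a case handled separately), or argue via the $\limsup$ directly: set $\bar y = \limsup_{t\to\infty} y(t)$ and $\bar x = \limsup_{t\to\infty} x(t)\le K$, apply a fluctuation lemma at a sequence where $y$ approaches $\bar y$ with $\dot y\to 0$, obtaining $d\bar y \le e^{-d\tau}cm\frac{\bar x}{a+\bar x}\bar y \le d\bar y$, and then use the transcritical/equilibrium structure to rule out a nonzero $\bar y$. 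The rest of the argument is routine given the boundedness and local-stability results already in hand.
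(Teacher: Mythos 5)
Your route is genuinely different from the paper's. The paper proves global asymptotic stability of $E_K$ for all $\tau\geq\tau_{max}$ in one stroke, via the explicit Lyapunov--Volterra functional
\begin{equation*}
V(\tilde{x},\tilde{y})=\tilde{x}-K-K\ln\frac{\tilde{x}}{K}+\frac{mK}{da}\,\tilde{y}+\tau\frac{cm^{2}Ke^{-d\tau}}{da}\int_{t-1}^{t}\frac{\tilde{x}(s)\tilde{y}(s)}{a+\tilde{x}(s)}\,ds,
\end{equation*}
whose derivative along solutions collapses to $-\tfrac{\tau r}{K}(K-\tilde{x})^{2}\leq 0$ precisely when $\tau\geq\tau_{max}$, after which the Lyapunov--LaSalle invariance principle identifies $\{(K,0)\}$ as the largest invariant subset of $\{\tilde x=K\}$. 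You instead decouple the system: a Halanay/small-gain estimate drives $y\to 0$ using $e^{-d\tau}cm\frac{K}{a+K}<d$ for $\tau>\tau_{max}$, and then asymptotic autonomy of the $x$-equation gives $x\to K$. Your deduction of the second assertion (no nonconstant periodic orbit can coexist with a globally attracting equilibrium, after unscaling) is the same in substance as the paper's use of this theorem inside Theorem \ref{smallK}, and is fine.

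The genuine gap is the boundary case $\tau=\tau_{max}$, which the theorem explicitly includes. There your comparison coefficients satisfy $\beta=\alpha=d$, the Halanay argument does not apply, and the fluctuation-lemma fallback you propose terminates in the chain $d\bar y\leq e^{-d\tau}cm\frac{\bar x}{a+\bar x}\,\bar y\leq d\bar y$, i.e.\ the tautology $0\leq 0$, which rules out nothing. The appeal to ``the transcritical/equilibrium structure'' to exclude $\bar y>0$ is not an argument: local exchange of stability at a transcritical point says nothing about global attraction, and at $\tau=\tau_{max}$ the equilibrium $E_K=E^{\ast}$ has a zero eigenvalue, so even local asymptotic stability is not supplied by Theorem \ref{boundary}(1) (which covers only $\tau>\tau_{max}$). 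Your other suggested repair --- exploiting the strict inequality $\frac{x}{a+x}<\frac{K}{a+K}$ whenever $x<K$, together with the observation that $x$ cannot remain near $K$ while $y$ stays bounded away from $0$ --- is the right idea and can be made to work (it is essentially what LaSalle's principle does automatically in the paper's proof), but as written it is a plan rather than a proof. To close the statement as given you must either carry that refinement out or adopt a functional of the paper's type that treats $\tau\geq\tau_{max}$ uniformly.
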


\begin{proof}
Consider the Lyapunov functional
\begin{equation*}
    V(\tilde{x}, \tilde{y})=\tilde{x}-K-K\ln{\frac{\tilde{x}}{K}}+\frac{mK}{da}\tilde{y}+\tau\frac{cm^2Ke^{-d\tau}}{da}\int_{t-1}^{t}\frac{\tilde{x}(s)\tilde{y}(s)}{a+\tilde{x}(s)}ds.
\end{equation*}
By Lemma \ref{positivity}, $V(\tilde{x}, \tilde{y})$ is well-defined. Furthermore, $V(\tilde{x}, \tilde{y})\geq 0$ for all $\tilde{x}>0, \tilde{y}\geq 0$, and $V(\tilde{x}, \tilde{y})=0$ if and only if $(\tilde{x}, \tilde{y})=(K, 0)$, we compute the total derivative along system \eqref{model global} with respect to $t$:
\begin{eqnarray*}
\dot{V}(\tilde{x}, \tilde{y})&=&r\tau \tilde{x}(t)(1-\frac{\tilde{x}(t)}{K})-\tau\frac{m\tilde{x}(t)\tilde{y}(t)}{a+\tilde{x}(t)}-K\tau\left(r(1-\frac{\tilde{x}(t)}{K})-\frac{m\tilde{y}(t)}{a+\tilde{x}(t)}\right)\\
& &+\tau\frac{cm^2Ke^{-d\tau}}{da}\cdot\frac{\tilde{x}(t-1)\tilde{y}(t-1)}{a+\tilde{x}(t-1)}-\tau\frac{mK}{a}\tilde{y}(t)\\
& &+\tau\frac{cm^2Ke^{-d\tau}}{da}\cdot\frac{\tilde{x}(t)\tilde{y}(t)}{a+\tilde{x}(t)}-\tau \frac{cm^2Ke^{-d\tau}}{da}\cdot\frac{\tilde{x}(t-1)\tilde{y}(t-1)}{a+\tilde{x}(t-1)}\\
&=&\tau \left[-\frac{r}{K}(K-\tilde{x})^2 +\tilde{y}\left(\frac{-m\tilde{x}+Km}{a+\tilde{x}}-\frac{mK}{a}+\frac{cm^2Ke^{-d\tau}\tilde{x}}{da(a+\tilde{x})}\right)\right].
\end{eqnarray*}
For $\tau\geq\tau_{max}$, we obtain
\begin{eqnarray*}
\dot{V}(\tilde{x}, \tilde{y})&\leq & \tau \left[-\frac{r}{K}(K-\tilde{x})^2 +\tilde{y}\left(\frac{-m\tilde{x}+Km}{a+\tilde{x}}-\frac{mK}{a}+\frac{cm^2Ke^{-d\tau_{max}}\tilde{x}}{da(a+\tilde{x})}\right)\right]\\
&=&-\frac{\tau r}{K}(K-\tilde{x})^2\leq 0.
\end{eqnarray*}
The last equality holds if and only if $\tilde{x}=K$ and $\tilde{y}\geq0$. From the first equation of system \eqref{model global}, the largest invariant set of $\{(K, \tilde{y})|\tilde{y}\geq 0\}$ is the singleton $\{(K, 0)\}$.  By Lyapunov-LaSalle's invariance principle \cite{Kuang1993}, $(K,0)$ is globally asymptotically stable for $\tau\geq \tau_{max}$.
\end{proof}

\begin{figure}[!ht]
\begin{center}
\includegraphics[angle=0, width=0.8\textwidth]{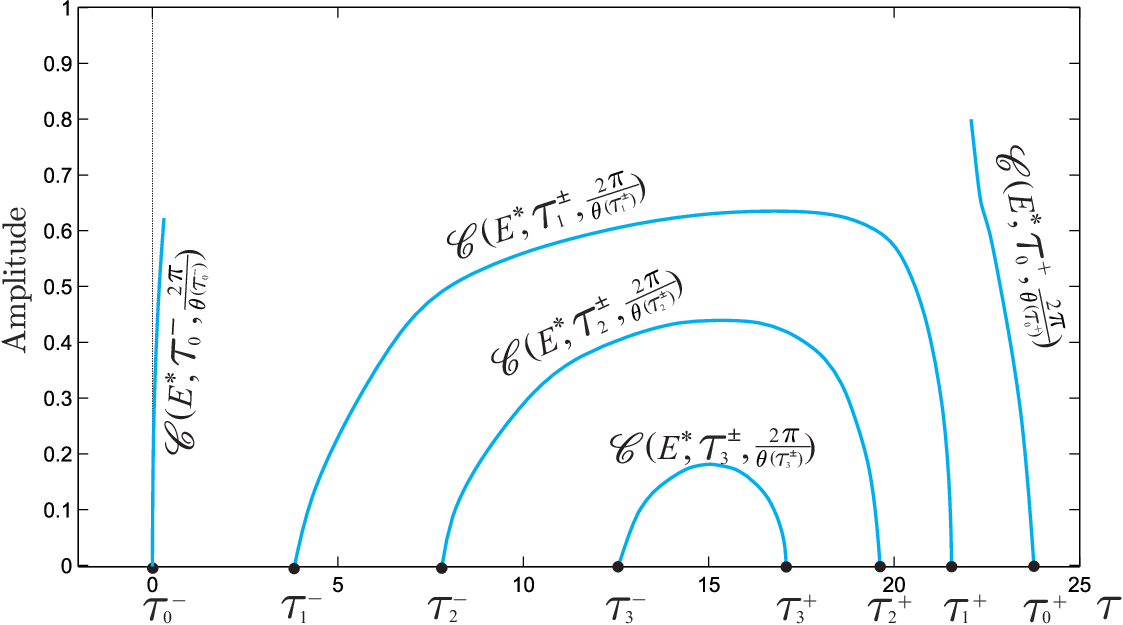}
\caption{\small{Global Hopf bifurcation and connected components for $K<K_0$. The parameters and values of $\tau_n^\pm (n=0, 1, 2, 3)$ are provided in Example 1(a).}}\label{fourbranch}
\end{center}
\end{figure}

\begin{theorem}\label{smallK}
Consider system \eqref{model global}, under the assumption (H1) with $K<K_0$. Then the following two statements hold.

\begin{itemize}
    \item[(1)] All connected components $\mathscr{C}(E^*,\tau_n^{(i)}, \frac{2\pi}{\theta(\tau_n^{(i)})})$ are bounded for all $n=1, 2, \cdots$.

    \item[(2)] If $\chi(n)=2$, denote $\tau_n^{(1)}$ and $\tau_n^{(2)}$ as $\tau_n^{-}$ and $\tau_n^{+}$, respectively. Then the connected components $\mathscr{C}(E^*,\tau_n^{-}, \frac{2\pi}{\theta(\tau_n^{-})})$ and $\mathscr{C}(E^*,\tau_n^{+}, \frac{2\pi}{\theta(\tau_n^{+})})$ coincide and are nested. The connected component
   $\mathscr{C}(E^*,\tau_n^{\pm}, \frac{2\pi}{\theta(\tau_n^{\pm})})$ connects exactly two bifurcation points $\tau_n^-$ and $\tau_n^+$, where $n=1, 2, \cdots$.
        \end{itemize}
\end{theorem}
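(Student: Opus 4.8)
I would feed each component into the dichotomy of Theorem~\ref{global-hopf-theorem}: first show that every $\mathscr{C}(E^*,\tau_n^{(i)},\tfrac{2\pi}{\theta(\tau_n^{(i)})})$ with $n\ge 1$ is bounded, so that alternative~(ii) must hold, and then read the connection pattern off the identity $\sum\gamma_1=0$. For statement~(1) the plan is to bound the three coordinates of an arbitrary point $(z,\tau,T)\in\mathscr{C}$ separately. The state $z$ is controlled by Lemma~\ref{pbound}: any non-constant periodic solution of~\eqref{modelH2}, equivalently of the rescaled system~\eqref{model global}, satisfies $0<\tilde x\le K$ and $0<\tilde y\le cK$, so $z_t$ stays in a fixed bounded subset of $C([-1,0],\mathbb{R}^2_+)$. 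The parameter $\tau$ lies in the bounded interval $(0,\tau_{max})$ by Theorem~\ref{no-extended-branches}. The only real work is the period $T$: at the center the minimal period equals $T_n^{(i)}=2\pi/\theta_n(\tau_n^{(i)})\in(\tfrac1{n+1},\tfrac1n)$ by Lemma~\ref{tinterval}, i.e.\ $1/T_n^{(i)}\in(n,n+1)$; since $K<K_0$, Lemma~\ref{no-period-one} forbids any point of $\mathscr{C}$ to carry a periodic solution admitting $1$ as a period, so $1/T$ is never a positive integer along the connected set $\mathscr{C}$. Being continuous and unable to cross the integers $1,2,\dots$, the quantity $1/T$ stays in $(n,n+1)$, so $\tfrac1{n+1}<T<\tfrac1n$ throughout $\mathscr{C}$; in particular $T$ is bounded above by $1$ and away from $0$. (The restriction $n\ge1$ is essential: for $n=0$ one only gets $T>1$, which bounds nothing.) The three bounds give statement~(1) for all $n\ge1$, and then Theorem~\ref{global-hopf-theorem} forces alternative~(ii) for each such $\mathscr{C}$.

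For statement~(2), fix $n$ with $\chi(n)=2$ and let $\mathscr{C}=\mathscr{C}(E^*,\tau_n^{-},\tfrac{2\pi}{\theta(\tau_n^{-})})$. I would first pin down which stationary solutions can lie on $\mathscr{C}$: the equilibria $E_0,E_K$ have no purely imaginary characteristic roots on $(0,\tau_{max})$ (Lemma~\ref{origin}, Theorem~\ref{boundary}), so they are not centers and contribute $0$ to the crossing sum; and any $E^*$-center $(E^*,\tau_m^{(j)},T_m^{(j)})$ on $\mathscr{C}$ must have $T_m^{(j)}\in(\tfrac1{n+1},\tfrac1n)$ by the period bound above, whereas $T_m^{(j)}\in(\tfrac1{m+1},\tfrac1m)$ for $m\ge1$ by Lemma~\ref{tinterval} and $T_0^{(j)}\ge2$ by Lemma~\ref{theta12}, so necessarily $m=n$. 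Hence the only centers $\mathscr{C}$ can contain are $(E^*,\tau_n^{(1)},\cdot)$ and $(E^*,\tau_n^{(2)},\cdot)$, namely $\tau_n^-$ and $\tau_n^+$. Next I would compute their crossing numbers: writing $g_n(\tau)=\tau w(\tau)-\theta_n(\tau)=w(\tau)S_n(\tau)$, Lemma~\ref{theta12} gives $g_n(0)=-\theta_n(0)<0$ and $g_n(\bar\tau)=-\theta_n(\bar\tau)=-(2n+1)\pi<0$, so with exactly two zeros on $(0,\bar\tau)$, transversal by (H1), the sign pattern of $g_n$ on $[0,\bar\tau]$ is $(-,+,-)$; thus $g_n'(\tau_n^-)>0$ and $g_n'(\tau_n^+)<0$, and since $S_n'=g_n'/w$ at a zero of $g_n$ with $w>0$, one obtains $\delta(\tau_n^-)=\sign\{S_n'(\tau_n^-)\}=+1$ and $\delta(\tau_n^+)=-1$, whence $\gamma_1(\tau_n^-)=-\delta(\tau_n^-)=-1$ and $\gamma_1(\tau_n^+)=+1$. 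Now $\mathscr{C}$ contains the center at $\tau_n^-$ with crossing number $-1$; in alternative~(ii) the crossing numbers over $\mathscr{C}\cap\mathscr{N}(F)$ sum to $0$, and the only admissible contributions come from $\tau_n^-$ and $\tau_n^+$, so $\mathscr{C}$ must also contain the center at $\tau_n^+$. Therefore $\mathscr{C}(E^*,\tau_n^{-},\cdot)=\mathscr{C}(E^*,\tau_n^{+},\cdot)$, and this common component meets $\mathscr{N}(F)$ in exactly the two bifurcation points $\tau_n^-$ and $\tau_n^+$. Finally, since $\theta_{n+1}(\tau)=\theta_n(\tau)+2\pi>\theta_n(\tau)$ for all $\tau$, the set $(\tau_n^-,\tau_n^+)=\{\tau\in(0,\bar\tau):\tau w(\tau)>\theta_n(\tau)\}$ shrinks as $n$ grows, so $(\tau_{n+1}^-,\tau_{n+1}^+)\subset(\tau_n^-,\tau_n^+)$ and the parameter ranges carried by successive components are nested, in agreement with Figure~\ref{fourbranch}.

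\textbf{Main difficulty.} The delicate part is the period control in statement~(1): turning Lemma~\ref{no-period-one} into the a priori bound $\tfrac1{n+1}<T<\tfrac1n$ hinges on the continuity of the period along $\mathscr{C}$ together with the exclusion of $1$ as a period, and the entire bookkeeping of statement~(2) --- matching period windows to isolate exactly which centers a component reaches --- rests on it. Handling $n=0$ separately (where the bound degenerates) and checking that $\mathscr{C}$ does not drift to an endpoint of $(0,\tau_{max})$ --- which one can do using that the vector field of~\eqref{model global} carries the factor $\tau$ and that $E^*$ is asymptotically stable for small $\tau>0$ --- are the remaining points that need care.
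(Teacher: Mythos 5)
Your proposal is correct and follows essentially the same route as the paper's proof: boundedness of the state via Lemma~\ref{pbound}, of $\tau$ via Lemma~\ref{ODEsystemdynamics} and Theorem~\ref{no-extended-branches}, and of $T$ by combining Lemma~\ref{tinterval} with Lemma~\ref{no-period-one} to trap $1/T$ in $(n,n+1)$, followed by the crossing-number sum from Theorem~\ref{global-hopf-theorem}(ii) with $\gamma_1(\tau_n^-)=-1$, $\gamma_1(\tau_n^+)=+1$ to force the two components to coincide. Your explicit sign analysis of $g_n=\tau w-\theta_n$ to derive the crossing numbers and the ordering $\tau_{n+1}^\pm$ inside $(\tau_n^-,\tau_n^+)$ merely fills in details the paper states without derivation.
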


\begin{proof}
(1). By Lemma \ref{pbound}, all periodic solutions of system \eqref{model global} are uniformly bounded.  Therefore, the projection of $\mathscr{C}(E^*,\tau_n^{(i)}, \frac{2\pi}{\theta(\tau_n^{(i)})})$ onto $C([-1, 0], \mathbb{R}^2_+)$ is bounded.

By Lemma \ref{ODEsystemdynamics} and Lemma \ref{no-extended-branches}, no periodic solution exists for $\tau=0$ or $\tau\geq \tau_{max}$. Consequently, the connected components do not extend beyond the finite interval $(0,\tau_{max})$. Hence, the projection of $\mathscr{C}(E^*,\tau_n^{(i)}, \frac{2\pi}{\theta(\tau_n^{(i)})})$ onto $\tau$-space is bounded.

Furthermore, by Lemma \ref{no-period-one},
there are no periodic solutions of period 1, and consequently, none with period $\frac{1}{n}, n=1,2,\cdots$. By the continuity of the connected component, the projection of $\mathscr{C}(E^*,\tau_n^{(i)}, \frac{2\pi}{\theta(\tau_n^{(i)})})$ onto the $T$-space is a subset of the finite interval $(\frac{1}{n+1}, \frac{1}{n})$.  Hence, all connected components $\mathscr{C}(E^*,\tau_n^{(i)}, \frac{2\pi}{\theta(\tau_n^{(i)})})$ are bounded in Fuller's space, where $n=1, 2, \cdots$.

(2). By Lemma \ref{tinterval}, the minimal period of the periodic solution bifurcating at $\tau_n^{\pm}$ is
$T_n^{\pm}=\frac{2\pi}{\theta(\tau_n^{\pm})}\in (\frac{1}{n+1}, \frac{1}{n})$. By Lemma \ref{no-period-one}, the projection of $\mathscr{C}(E^*,\tau_n^{\pm}, \frac{2\pi}{\theta(\tau_n^{\pm})})$ onto the $T$-space is a subset of the finite interval $(\frac{1}{n+1}, \frac{1}{n})$. Moreover, $\mathscr{C}(E^*,\tau_n^{-}, \frac{2\pi}{\theta(\tau_n^{-})})$ and $\mathscr{C}(E^*,\tau_n^{+}, \frac{2\pi}{\theta(\tau_n^{+})})$ are the only two connected components whose third arguments belong to $(\frac{1}{n+1}, \frac{1}{n})$.

By Theorem \ref{global-hopf-theorem}, the first crossing number of $(E^*,\tau_n^{\pm}, \frac{2\pi}{\theta(\tau_n^{\pm})})$ on $\mathscr{C}(E^*,\tau_n^{\pm}, \frac{2\pi}{\theta(\tau_n^{\pm})})$ satisfies
$$\sum_{(z,\tau,T)\in \mathscr{C}(E^*,\tau_n^{\pm}, \frac{2\pi}{\theta(\tau_n^{\pm})})\cap \mathscr{N}(F)}\gamma_1(z,\tau,T)=0.$$
Note that $$\gamma_1(E^*,\tau_n^{-}, \frac{2\pi}{\theta(\tau_n^-)})=-1, \ \text{and}\ \ \gamma_1(E^*,\tau_n^{+}, \frac{2\pi}{\theta(\tau_n^+)})=1.$$ Hence, $\mathscr{C}(E^*,\tau_n^{-}, \frac{2\pi}{\theta(\tau_n^{-})})$ and $\mathscr{C}(E^*,\tau_n^{+}, \frac{2\pi}{\theta(\tau_n^{+})})$ must coincide, initiating at $\tau=\tau_n^-$ and terminating at $\tau_n^+$ in the $\tau$-space (or vice versa). Since $\tau_j^-<\tau_i^-<\tau_i^+<\tau_j^+$ for $j<i$, the components $\mathscr{C}(E^*,\tau_n^{\pm}, \frac{2\pi}{\theta(\tau_n^{\pm})})$ are nested, where $n=1, 2, \cdots$.
\end{proof}

If $\chi(0)=2$ and assume the projection of $\mathscr{C}(E^*,\tau_0^{-}, \frac{2\pi}{\theta(\tau_0^{-})})$ and $\mathscr{C}(E^*,\tau_0^{+}, \frac{2\pi}{\theta(\tau_0^{+})})$ onto the $T$-space are bounded, by Theorem \ref{global-hopf-theorem} and a similar argument as that of Theorem \ref{smallK}, we obtain the following result.

\begin{theorem}\label{first branch} Let $K<K_0$ and $\chi(0)=2$.
    If the projection of $\mathscr{C}(E^*,\tau_0^{-}, \frac{2\pi}{\theta(\tau_0^{-})})$ and $\mathscr{C}(E^*,\tau_0^{+}, \frac{2\pi}{\theta(\tau_0^{+})})$ onto the $T$-space are bounded, then they coincide and connect two bifurcation points $\tau_0^-$ and $\tau_0^+$.
\end{theorem}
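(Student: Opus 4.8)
The plan is to re-run the scheme of the proof of Theorem~\ref{smallK} for the index $n=0$. The only structural novelty is that the period localization of Lemma~\ref{tinterval} is unavailable at $n=0$, so the boundedness of the $T$-projection, which came for free when $n\ge 1$, must now be taken as a hypothesis; everything else is a transcription of the $n\ge1$ argument.

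First I would verify that both components $\mathscr{C}(E^\ast,\tau_0^-,\frac{2\pi}{\theta(\tau_0^-)})$ and $\mathscr{C}(E^\ast,\tau_0^+,\frac{2\pi}{\theta(\tau_0^+)})$ are bounded in Fuller's space $C([-1,0],\mathbb{R}^2_+)\times(0,\tau_{max})\times\mathbb{R}_+$. Boundedness of the projection onto $C([-1,0],\mathbb{R}^2_+)$ is immediate from Lemma~\ref{pbound}, which confines every non-constant periodic solution to $0<\tilde x\le K$ and $0<\tilde y\le cK$. For the $\tau$-projection, Lemma~\ref{ODEsystemdynamics} (at $\tau=0$ system \eqref{model global} reduces to \eqref{ODE model}, which has no periodic orbit for $K<K_0$) together with Theorem~\ref{no-extended-branches} (no periodic solution for $\tau\ge\tau_{max}$) shows that it is contained in the bounded interval $(0,\tau_{max})$. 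The boundedness of the $T$-projection is precisely the standing assumption. Hence alternative (i) of Theorem~\ref{global-hopf-theorem} is excluded, and alternative (ii) applies to each component: its intersection with $\mathscr{N}(F)$ is finite and the first crossing numbers along it sum to zero.

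Next I would determine which centers can lie on these components. By Lemma~\ref{theta12} one has $\zeta\le\theta_0(\tau)\le\pi$ on $[0,\bar\tau]$, so the bifurcating minimal period satisfies $T_0^\pm=2\pi/\theta_0(\tau_0^\pm)\ge 2$. The $T$-projection of a connected component is connected, and by Lemma~\ref{no-period-one} (a period-$1/k$ solution is in particular a period-$1$ solution) it never contains the value $1$; since it contains the point $T_0^\pm\ge 2$, it is a subset of $(1,\infty)$. On the other hand, every center with index $n\ge 1$ has minimal period $T_n^{(i)}\in(\frac1{n+1},\frac1n)\subset(0,1)$ by Lemma~\ref{tinterval}. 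Therefore no $n\ge1$ center lies on $\mathscr{C}(E^\ast,\tau_0^\pm,\frac{2\pi}{\theta(\tau_0^\pm)})$, and since $\chi(0)=2$ the only centers available are $(E^\ast,\tau_0^-,\frac{2\pi}{\theta(\tau_0^-)})$ and $(E^\ast,\tau_0^+,\frac{2\pi}{\theta(\tau_0^+)})$. To fix the two crossing numbers I would use $\gamma_1(E^\ast,\tau_0^{(i)},\cdot)=-\delta(\tau_0^{(i)})$ and $\delta(\tau_0^{(i)})=\sign(S_0'(\tau_0^{(i)}))$ (since $F_w>0$ in Theorem~\ref{lhopf}); under (H1) the two roots $\tau_0^-<\tau_0^+$ of $S_0(\tau)=\tau-\theta_0(\tau)/w(\tau)$ are transversal, hence genuine sign changes, and since $S_0(0)=-\theta_0(0)/w(0)<0$ (with $w(0)>0$, $\theta_0(0)>0$ for $K<K_0$) while $S_0(\tau)\to-\infty$ as $\tau\to\bar\tau^-$ (because $L(\bar\tau)+dH(\bar\tau)=0$ forces $w(\bar\tau)=0$ in \eqref{wtauexpression}, while $\theta_0(\bar\tau)=\pi$), the pattern is: sign change from negative to positive at $\tau_0^-$ and from positive to negative at $\tau_0^+$. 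Thus $\gamma_1(E^\ast,\tau_0^-,\cdot)=-1$ and $\gamma_1(E^\ast,\tau_0^+,\cdot)=+1$. Since $\mathscr{C}(E^\ast,\tau_0^-,\frac{2\pi}{\theta(\tau_0^-)})$ carries the center $(E^\ast,\tau_0^-,\cdot)$ with crossing number $-1\ne 0$, the vanishing-sum condition from the previous step forces it to contain a second center, which by the paragraph above can only be $(E^\ast,\tau_0^+,\cdot)$. Hence the two components coincide, and the common component initiates at $\tau=\tau_0^-$ and terminates at $\tau=\tau_0^+$ in $\tau$-space.

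The hard part is the first step. For $n\ge1$ the branches were trapped inside the period window $(\frac1{n+1},\frac1n)$, which yielded $T$-boundedness automatically; for $n=0$ there is no such window and, a priori, the period could diverge along the branch (a large-period or homoclinic-type degeneration, or the branch escaping to $\tau\to 0^+$ while $T\to\infty$). This is exactly why the boundedness of the $T$-projection must be imposed; once it is granted, the remaining argument is essentially verbatim that of Theorem~\ref{smallK}.
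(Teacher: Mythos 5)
Your proposal is correct and follows essentially the same route as the paper, which itself only remarks that the result follows ``by Theorem~\ref{global-hopf-theorem} and a similar argument as that of Theorem~\ref{smallK}'': you take the $T$-boundedness as the hypothesis replacing Lemma~\ref{tinterval}, exclude alternative (i), isolate the two $n=0$ centers via the period window $T>1$ versus $T_n^{(i)}<1$ for $n\ge 1$, and close with the vanishing crossing-number sum. Your explicit derivation of $\gamma_1(E^\ast,\tau_0^-,\cdot)=-1$ and $\gamma_1(E^\ast,\tau_0^+,\cdot)=+1$ from the sign pattern of $S_0$ (negative at $\tau=0$ and as $\tau\to\bar{\tau}^-$, with two transversal zeros) is a welcome addition of detail that the paper only asserts.
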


It is important to note that the nonexistence of periodic solutions in the ODE system \eqref{ODE model} (i.e., system \eqref{modelH2} plays a crucial role in establishing the boundedness and connectedness of the connected components, as demonstrated in the proof of Theorem \ref{smallK}. However, when $K>K_0$, system \eqref{modelH2} admits a periodic solution at $\tau=0$. Consequently, system \eqref{model global} also possesses a periodic solution of period 1. As a result, we lose the ability to estimate the period of the connected components in the $T$-space, and whether $\mathscr{C}(E^*,\tau_n^{-}, \frac{2\pi}{\theta(\tau_n^{-})})$ and $\mathscr{C}(E^*,\tau_n^{+}, \frac{2\pi}{\theta(\tau_n^{+})})$ coincide remains unclear if $\chi(n)=2$ (see Fig. \ref{fourbranch} for these two connected components. It is not clear whether they are connected due to the computational challenge).

Nevertheless, we will explore the relations between the connected components for $K>K_0$ through numerical analysis using {\it DDE-BifTool}. Instead of analyzing $\mathscr{C}(E^*,\tau_n^{(i)}, \frac{2\pi}{\theta(\tau_n^{(i)})})$ for system \eqref{model global}, we consider the connected components $\mathscr{C}(E^*,\tau_n^{(i)}, \frac{2\pi}{w(\tau_n^{(i)})})$ for system \eqref{modelH2}, since  a periodic solution exists at $\tau=0$.

\begin{figure}[!ht]
\begin{center}
\includegraphics[angle=0, width=0.96\textwidth]{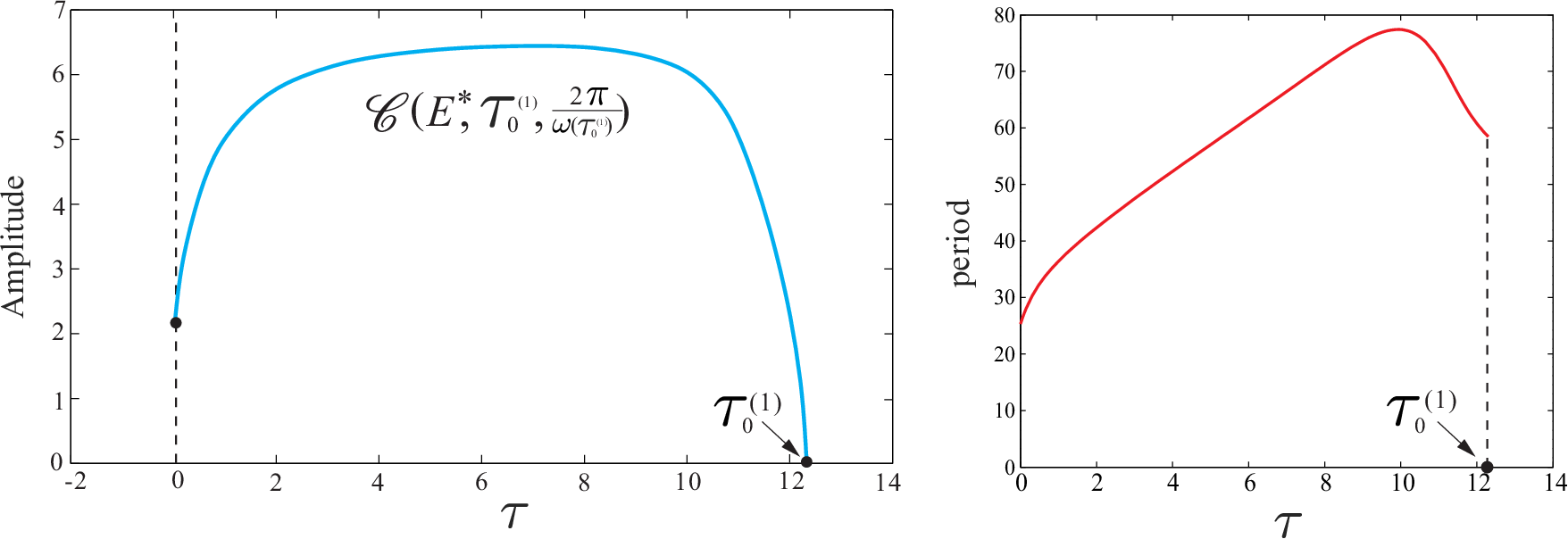}
\caption{\small{Global Hopf bifurcation and a unique connected component $\mathscr{C}(E^*,\tau_0^{(1)}, 2\pi/w(\tau_0^{(1)}))$ for $K>K_0$.
The left panel shows the amplitude and the right panel displays the period of the periodic solution for $\tau\in[0, \tau_0^{(1)}]$. The parameter are $K=7, r=1, m=1, c=1, d=0.1, a=5$, and $K>K_0=6.11$.
}}\label{onebranch}
\end{center}
\end{figure}

\setlength{\leftmargini}{0.3cm}
\begin{itemize}
    \item {\it A unique connected component and the limit cycle of the ODE model \eqref{ODE model}.}

If $\chi(0)=1$ and $\chi(n)=0$ for $n\geq 1$, $S_0(\tau)=0$ has a unique root $\tau=\tau_0^{(1)}$ on $[0, \bar{\tau})$ and system \eqref{modelH2} has a unique connected component $\mathscr{C}(E^*,\tau_0^{(1)}, \frac{2\pi}{w(\tau_0^{(1)})})$. When $\tau=0$, $J(E^\ast)$ has a pair of conjugate complex eigenvalues with positive real part, meaning $E^\ast$ is unstable and a stable limit cycle $\Gamma_0$ (i.e, an isolated periodic solution) exists around $E^\ast$.  As $\tau$  increases from $0$ to $\tau_0^{(1)}$, and passes by $\tau_0^{(1)}$,  Lemma \ref{roots} implies that this pair of complex eigenvalues vary continuously, crossing the imaginary axis from right to left at $\tau=\tau_0^{(1)}$ so that $E^\ast$ becomes stable. Meanwhile, the limit cycle $\Gamma_0$ evolves continuously. However, $\Gamma_0$  does not necessarily persist over the entire interval $[0, \tau_0^{(1)})$, as it may vanish due to a bifurcation of periodic solutions at some $\tau<\tau_0^{(1)}$. If no such bifurcation occurs for $\tau\in(0, \tau_0^{(1)})$, then $\Gamma_0$ must disappear through a local Hopf bifurcation at $\tau=\tau_0^{(1)}$, i.e., $(\Gamma_0, 0, \frac{2\pi}{\bar{w}})\in \mathscr{C}(E^*,\tau_0^{(1)}, \frac{2\pi}{w(\tau_0^{(1)})})$, where 
$\bar{w}$ is the frequency of the limit cycle $\Gamma_0$.Thus, {\it the limit cycle of the ODE model \eqref{ODE model} belongs to a connected component of the Hopf bifurcation of the DDE model \eqref{modelH2} in Fuller's space}. As illustrated in Fig. \ref{onebranch}, $\Gamma_0$ varies continuously on $[0, \tau_0^{(1)}]$ and terminates $\tau=\tau_0^{(1)}$ through a Hopf bifurcation.
\end{itemize}

 \begin{figure}[!ht]
\begin{center}
\includegraphics[angle=0, width=0.96\textwidth]{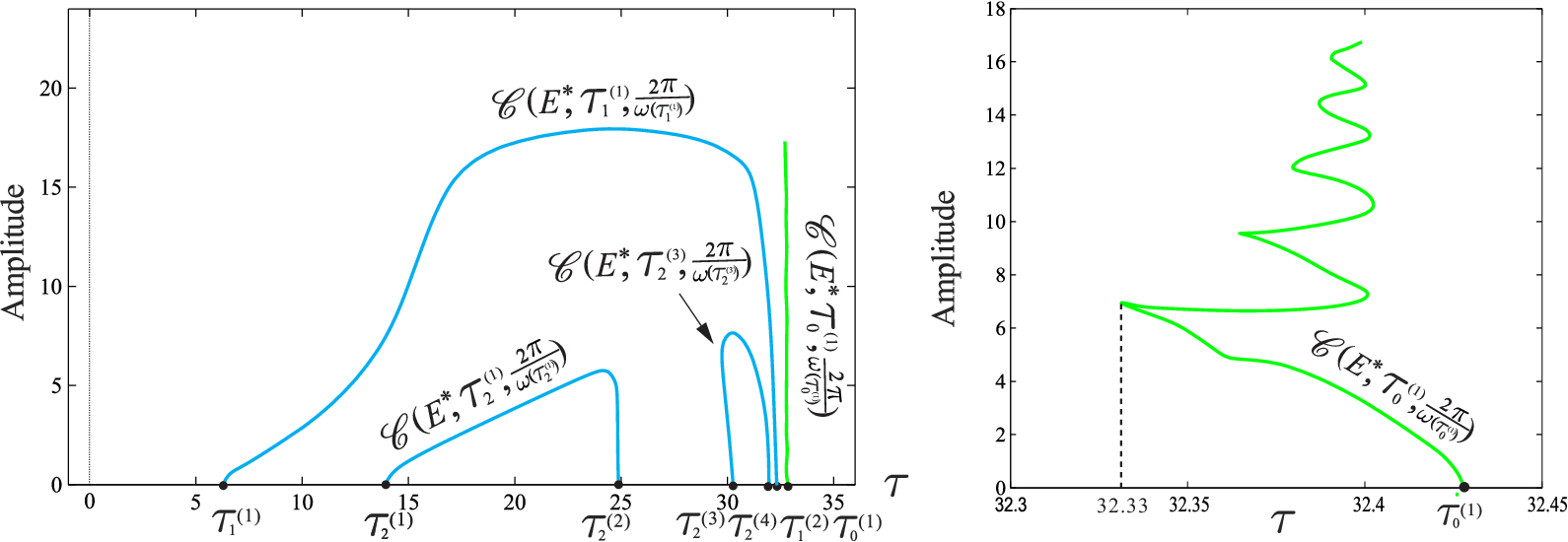}
\caption{\small{Global Hopf bifurcation and multiple connected components for $K>K_0$. The right panel is a magnified view of the left panel near $\tau=\tau_0^{(1)}$. The parameter values and $\tau_n^{(i)} (n=0, 1, 2)$ are specified in Example 1(b).}}\label{threebranchv10}
\end{center}
\end{figure}

\begin{itemize}

\item {\it Multiple connected components. }

If $\chi(n)>2$ for some $n\in\mathbb{N}$, then $S_n(\tau)=0$ has $\chi(n)$ critical values $\tau=\tau_n^{{(i)}}$, leading to $\chi(n)$ connected components $\mathscr{C}(E^*,\tau_n^{(i)}, \frac{2\pi}{\theta(\tau_n^{(i)})})$, where $i=1, 2, \cdots, \chi(n)$.
Studying the relationship between these connected components is technically challenging, regardless of whether $K<K_0$ or $K>K_0$. To illustrate this complexity, we consider the parameter values from Example 1(b), where $\chi(0)=1, \chi(1)=2$, $\chi(2)=4$, and $\chi(n)=0$ for $n\geq 3$. Based on Fig. \ref{threebranchv10} we have the following observations:
\begin{itemize}
\item For $n=0$: $\mathscr{C}(E^*,\tau_0^{(1)}, \frac{2\pi}{w(\tau_0^{(1)})})$ initials at $\tau_0^{(1)}$, and experiences a sequence of saddle-node bifurcations of limit cycles occurs in the interval $[32.33, 32.4]$. The curve does not extend any further due to computational challenges. 



\item For $n=1$: $\mathscr{C}(E^*,\tau_1^{(1)}, \frac{2\pi}{w(\tau_1^{(1)})})$ and  $\mathscr{C}(E^*,\tau_1^{(2)}, \frac{2\pi}{w(\tau_1^{(2)})})$ coincide, i.e., the periodic solution initials at $\tau_1^{(1)}$ and terminates at $\tau_1^{(2)}$.

\item  For $n=2$: $\mathscr{C}(E^*,\tau_2^{(1)}, \frac{2\pi}{w(\tau_2^{(1)})})$ and $\mathscr{C}(E^*,\tau_2^{(2)}, \frac{2\pi}{w(\tau_2^{(2)})})$ coincide, $\mathscr{C}(E^*,\tau_2^{(3)}, \frac{2\pi}{w(\tau_2^{(3)})})$ and \\ $\mathscr{C}(E^*,\tau_2^{(4)}, \frac{2\pi}{w(\tau_2^{(4)})})$ coincide.
That is to say, we have the formation of a periodic solution at $\tau_{2}^{(1)}$  terminating at $\tau_{2}^{(2)}$, and a formation of another periodic solution at $\tau_{2}^{(3)}$ terminating at  $\tau_{2}^{(4)}$.
\end{itemize}

\end{itemize}

\section{Bifurcation diagram}

In this section, we examine the bifurcation diagram of system \eqref{modelH2} in the $(\tau, K)$-plane. By statement (2) of Lemma \ref{boundary}, a transcritical bifurcation involving $E_K$ and $E^\ast$ occurs when  $\tau=\tau_{max}$, i.e.,  $(\tau, K)$ lies on the curve $\mathscr{L}_c$, defined as
$$\mathscr{L}_c:= \{(\tau, K)\in \overline{\mathbb{R}}_+\times \mathbb{R}_+| K=\frac{ad}{cme^{-d\tau}-d}\} .$$

By Corollary \ref{chopf}, there exists a sequence of bifurcation curves through which Hopf bifurcation occurs at $E^\ast$. Let $\ell_n(\tau, K):=\tau w(\tau, K)-\theta_n(\tau. K)$. The Hopf bifurcation curve is given by
$$\mathscr{L}_n= \{(\tau, K)\in\overline{\mathbb{R}}_+\times \mathbb{R}_+|\ell_n(\tau, K)=0, 0\leq \tau<\bar{\tau}\}, \ n=0, 1, 2, \cdots.$$

\begin{lemma}\label{L0} Consider the curve $\mathscr{L}_0$, i.e., the graph of $\ell_0(\tau, K)=0$.
\begin{enumerate}

\item[(1)] $\mathscr{L}_0$ lies above the curve $\mathscr{L}_c$.

\item[(2)] $\mathscr{L}_0$ starts at the point $(\tau, K)=( 0, K_0)$ and extends to infinity, meaning $\mathscr{L}_0$ exists for all $K\geq K_0$.

\item[(3)] There exists a smooth function $K=K(\tau)$ for $\tau\in[0, \varepsilon)$, where $\varepsilon>0$ is small such that $$K_0=K(0),\ \  \ell_0(K(\tau), \tau)=0, \ \tau\in [0, \varepsilon).$$ Furthermore,  $K'(\tau)<0$ if $cm>(1+\sqrt{2})d$, $K'(\tau)>0$ if $cm<(1+\sqrt{2})d$. If $cm=(1+\sqrt{2})d$, then $K''(\tau)>0$.

\end{enumerate}


\end{lemma}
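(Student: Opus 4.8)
Statements (1) and (2) are quick. For (1): every point of $\mathscr{L}_0$ has $0\le\tau<\bar\tau$, and $\bar\tau<\tau_{max}$ by Theorem~\ref{wtau theorem}(1). The map $\tau\mapsto K_1(\tau)=\frac{ad}{cme^{-d\tau}-d}$, whose graph is $\mathscr{L}_c$, is increasing on $[0,\tau_{max}]$, and solving $K_1(\tau)=K$ returns exactly $\tau=\tau_{max}$; hence $\tau<\tau_{max}$ is equivalent to $K>K_1(\tau)$, i.e.\ to $(\tau,K)$ lying strictly above $\mathscr{L}_c$ (which is condition $C_0$, needed by Lemma~\ref{lemmaE} for $E^\ast$ to exist). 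For (2): fix $K\ge K_0$; since $K_0>K_2$, Theorem~\ref{wtau theorem} makes $w(\cdot,K)$, $\bar\tau$, $\theta_0(\cdot,K)$, and hence $\ell_0(\cdot,K)$, well defined and continuous on $[0,\bar\tau]$. By Lemma~\ref{theta12}(1),(3), $\tau w(\tau)|_{\tau=\bar\tau}=0$ and $\theta_0(\bar\tau)=\pi$, so $\ell_0(\bar\tau,K)=-\pi<0$; and $\ell_0(0,K)=-\theta_0(0)$, which is $0$ if $K=K_0$ and positive if $K>K_0$ by Lemma~\ref{theta12}(2). Thus $(0,K_0)\in\mathscr{L}_0$, and for every $K>K_0$ the intermediate value theorem gives a root $\tau\in(0,\bar\tau)$ of $\ell_0(\cdot,K)$; so $\mathscr{L}_0$ meets every line $K=c$ with $c\ge K_0$ and extends to infinity.

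For (3) I would avoid the $\arccos$ in $\theta_0$ and work with the equivalent real/imaginary system coming from $P(iw,\tau)=0$ in \eqref{characteristics equation E^* Holling type 2}:
\[
G(w,\tau,K):=\bigl(-w^2+L\cos(\tau w)-dw\sin(\tau w)-dH,\ \ (d-H)w-dw\cos(\tau w)-L\sin(\tau w)\bigr),
\]
with $H=H(\tau,K)$, $L=L(\tau,K)=dH+A$. At $\tau=0$, $K=K_0$: $H=0$, $L=A(0,K_0)=\frac{rd(cm-d)}{cm+d}$, and \eqref{wtauexpression} gives the positive root $w_0:=w(0,K_0)=\sqrt{A(0,K_0)}$, so $G(w_0,0,K_0)=0$. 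A direct computation gives
\[
\partial_{(w,K)}G\big|_{(w_0,0,K_0)}=\begin{pmatrix}-2w_0 & \partial_K A(0,K_0)\\[0.4ex] 0 & -w_0\,\partial_K H(0,K_0)\end{pmatrix},\qquad \partial_K H(0,K_0)=\frac{rd}{cm\,K_0}>0,
\]
which is triangular and invertible; hence the implicit function theorem yields smooth $w(\tau)>0$ and $K(\tau)$ on $[0,\varepsilon)$ with $K(0)=K_0$ and $G(w(\tau),\tau,K(\tau))=0$. Since at $\tau=0$ we have $\tau w(\tau)=0=\theta_0(0,K_0)$ and both $\tau w(\tau)$ and $\theta_0$ stay in $(-\pi,\pi)$ nearby, the $n=0$ branch persists, so $\tau w(\tau)=\theta_0(\tau,K(\tau))$, i.e.\ $\ell_0(\tau,K(\tau))=0$ on $[0,\varepsilon)$.

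Differentiating $G(w(\tau),\tau,K(\tau))=0$ once in $\tau$ at $\tau=0$: in the imaginary component the $w'(0)$-term drops out (its $\partial_w$ vanishes because $H(0,K_0)=0$), leaving the clean identity $H'(0)=-L(0,K_0)=-A(0,K_0)$, i.e.\ $H_\tau(0,K_0)+H_K(0,K_0)K'(0)=-A(0,K_0)$, whence
\[
K'(0)=\frac{-A(0,K_0)-H_\tau(0,K_0)}{H_K(0,K_0)}.
\]
One computes $H_\tau(0,K_0)=-\dfrac{2rd^3}{(cm)^2-d^2}$ and $A(0,K_0)=\dfrac{rd(cm-d)^2}{(cm)^2-d^2}$, so the numerator equals $\dfrac{rd}{(cm)^2-d^2}\bigl(2d^2-(cm-d)^2\bigr)=-\dfrac{rd}{(cm)^2-d^2}\bigl((cm)^2-2cm\,d-d^2\bigr)$. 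Since $(cm)^2-2cm\,d-d^2=\bigl(cm-(1+\sqrt2)d\bigr)\bigl(cm-(1-\sqrt2)d\bigr)$ with the second factor and $(cm)^2-d^2$ both positive, and $H_K(0,K_0)>0$, we get $\sign K'(0)=-\sign\bigl(cm-(1+\sqrt2)d\bigr)$: thus $K'(0)<0$ for $cm>(1+\sqrt2)d$ and $K'(0)>0$ for $cm<(1+\sqrt2)d$.

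When $cm=(1+\sqrt2)d$ we have $K'(0)=0$, and the real component then gives $w'(0)=-\frac{rd^2}{2w_0}$ (using $A'(0)=A_\tau(0,K_0)=-\frac{2rd^3}{cm+d}$). The remaining task is to differentiate $G=0$ twice in $\tau$ at $0$: with $K'(0)=0$ the quantity $K''(0)$ enters linearly with the positive coefficient $H_K(0,K_0)$, and substituting $cm=(1+\sqrt2)d$ into the resulting expression should give $K''(0)>0$. I expect this final step to be the main obstacle: it requires the second $\tau$-derivatives of $H$, $L$ and of $\cos(\tau w),\sin(\tau w)$ together with the $w'(0)$-cross terms, and the sign can be read off only after imposing $cm=(1+\sqrt2)d$; by contrast, the other three assertions are essentially bookkeeping-free.
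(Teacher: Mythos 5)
Parts (1) and (2) of your argument are correct and match the paper's in substance: (1) is exactly the observation that $\mathscr{L}_0$ lives in the parameter region where $E^\ast$ exists (Lemma~\ref{lemmaE}), and your intermediate-value argument for (2), using $\ell_0(0,K)=-\theta_0(0)>0$ for $K>K_0$ and $\ell_0(\bar\tau,K)=-\pi<0$ from Lemma~\ref{theta12}, is a slightly more explicit version of the paper's appeal to Lemma~\ref{sign of theta}. For (3) you take a genuinely different route: the paper applies the implicit function theorem directly to the scalar equation $\ell_0(\tau,K)=0$, checking $\partial_K\ell_0(0,K_0)=-\frac{(cm-d)\sqrt{rd}}{mca\sqrt{c^2m^2-d^2}}\neq 0$, whereas you apply it to the two-component real/imaginary system $G(w,\tau,K)=0$, which avoids differentiating the $\arccos$ in $\theta_0$. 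Your computations are all verifiably correct: $H(0,K_0)=0$, $H_K(0,K_0)=\frac{rd}{cmK_0}$, $H_\tau(0,K_0)=-\frac{2rd^3}{c^2m^2-d^2}$, $A(0,K_0)=\frac{rd(cm-d)}{cm+d}$, and your resulting
$$K'(0)=\frac{-A(0,K_0)-H_\tau(0,K_0)}{H_K(0,K_0)}=-\frac{mca\left(c^2m^2-2cmd-d^2\right)}{(cm-d)^2}$$
coincides exactly with the value recorded in the paper's proof, so the sign dichotomy at $cm=(1+\sqrt2)d$ follows as you state.

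The one genuine gap is the final sub-claim of (3): you set up the degenerate case $cm=(1+\sqrt2)d$ (correctly obtaining $K'(0)=0$ and $w'(0)=-\frac{rd^2}{2w_0}$) but explicitly stop short of the second implicit differentiation, so $K''(0)>0$ is asserted as an expectation rather than proved. Since that inequality is part of the lemma, the proof is incomplete as written. The computation is routine within your framework --- differentiate $G(w(\tau),\tau,K(\tau))=0$ twice at $\tau=0$, note that $K''(0)$ enters only through the invertible block $\partial_{(w,K)}G$ so it is determined linearly, and substitute $cm=(1+\sqrt2)d$ --- and the paper records the resulting value $K''(0)=3\sqrt2\,ac^2m^2>0$. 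You should carry this step out (or at least verify the paper's closed form) to close the argument.
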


\begin{proof}
Statement (1) follows directly from Lemma \ref{lemmaE}.

(2) Since $\ell_0(0, K_0)=0-\arccos(1)=0$, the point $(0, K_0)$ serves as an endpoint of $\mathscr{L}_0$. By Lemma \ref{sign of theta}, for any $K\geq K_0$, $\tau w(\tau)$ intersects $\theta_0(\tau)$ at least once. Hence, the graph of $\ell_0(\tau, K)=0$ exists for all $K\geq K_0$.

(3) By direct calculation, we obtain $$\ell_0(0, K_0)=0, \text{and} \ \frac{\partial \ell_0(\tau, K)}{\partial K}_{|(0, K_0)}=-\frac{(cm-d)\sqrt{rd}}{mca\sqrt{c^2m^2-d^2}}\neq 0.$$
By the Implicit Function Theorem, there exists a function $K=K(\tau)$ for $\tau\in[0, \varepsilon)$, where $\varepsilon>0$ is small such that $K_0=K(0)$ and $\ell_0(K(\tau), \tau)=0$. Furthermore, a straightforward calculation yields
 $$K'(\tau)_{|\tau=0}=-\frac{mca(c^2m^2-2cmd-d^2)}{(cm-d)^2}.$$
If $K'(\tau)_{|\tau=0}=0$ (i.e., $cm=(1+\sqrt{2})d$), then by implicit differentiation $K''(\tau)_{|\tau=0}=3\sqrt{2}ac^2m^2$. This establishes the desired result.
\end{proof}

\begin{figure}[!htp]
    \centering
   \subfigure[$cm\leq(1+\sqrt{2})d$]{\includegraphics[angle=0, width=0.40\textwidth]{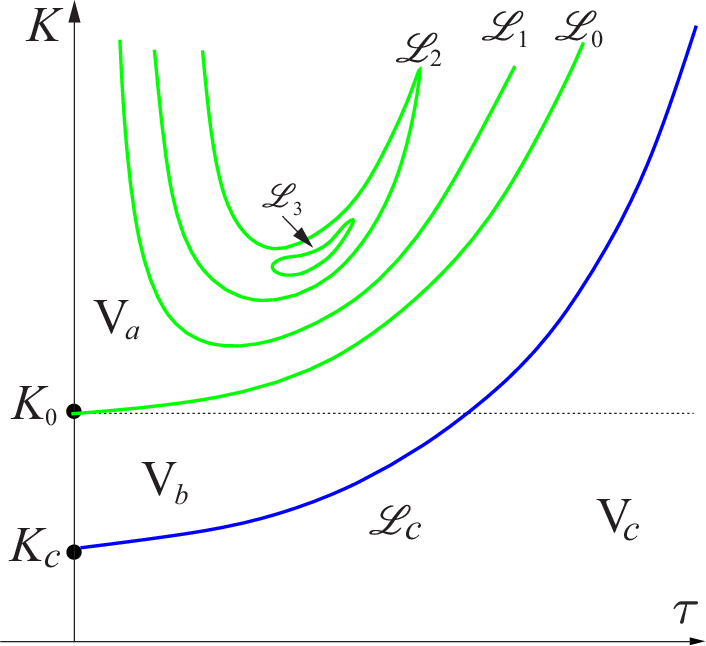}} \qquad
   \subfigure[$cm>(1+\sqrt{2})d$]{\includegraphics[angle=0, width=0.40\textwidth]{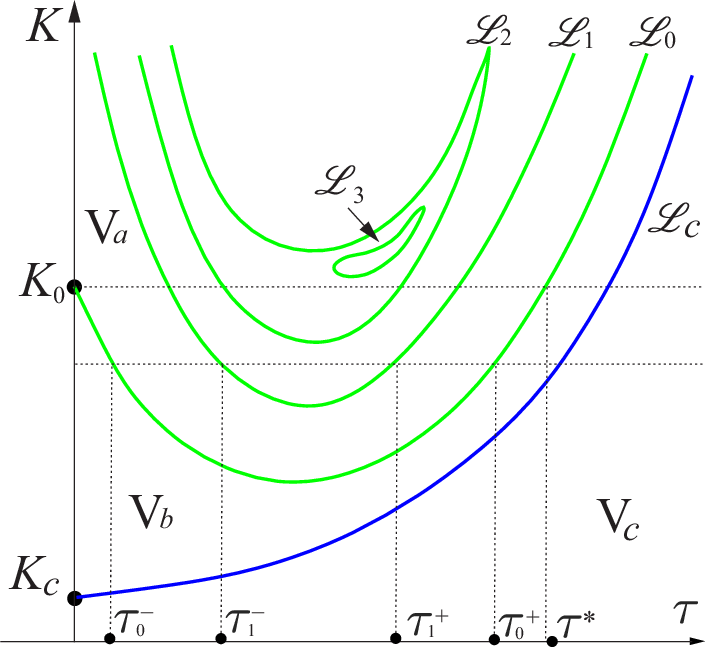}}
\caption{Bifurcation diagram in the $(\tau, K)$-plane. The curve $\mathscr{L}_c$ represents the transcritical bifurcation, while
$\mathscr{L}_n (n=0, 1, 2, 3)$ are curves of Hopf bifurcation. Dash lines highlight critical parameter values that are not bifurcation curves.}\label{bf2}
\end{figure}

In what follows, we will sketch the bifurcation diagram of the system \eqref{modelH2} in the first quadrant of the $(\tau, K)$ plane. There are a curve of transcritical bifurcation $\mathscr{L}_c$, and a curve of Hopf bifurcation $\mathscr{L}_0$. By statement (3) of Lemma \ref{L0}, in a neighborhood of $(0, K_0)$, the curve $\mathscr{L}_0$ exhibits different monotonicity behaviors depending on whether $cm\leq (1+\sqrt{2})d$ or $cm> (1+\sqrt{2})d$. The simplest sketch of $\mathscr{L}_0$ is presented in Fig. \ref{bf2}, ensuring minimal changes in monotonicity for $\tau\in [0, \check{\tau})$.

The two curves $\mathscr{L}_c$ and  $\mathscr{L}_0$ divide the first quadrant of $(\tau, K)$-plane into three regions $V_a$, $V_b$ and $V_c$, defined as follows:
\begin{eqnarray*}
  &&V_a=\{(\tau, K)\in\overline{\mathbb{R}}_+\times \mathbb{R}_+| \ell_0(\tau, K)>, 0\leq \tau<\bar{\tau}\},\\
  &&V_b=\{(\tau, K)\in\overline{\mathbb{R}}_+\times \mathbb{R}_+|\ell_0(\tau, K)<0, 0\leq \tau<\bar{\tau}\},
  V_c=\{(\tau, K)\in \overline{\mathbb{R}}_+\times \mathbb{R}_+|\tau>\tau_{max}\}.
\end{eqnarray*}
By Theorem \ref{no-extended-branches}, $E_K$ is globally asymptotically stable for $(K, \tau)\in V_c$, and system \eqref{modelH2} undergoes a transcritical bifurcation for $(K, \tau)\in \mathscr{L}_c$. $E^\ast$ is locally asymptotically stable  for $(K, \tau)\in V_b$, then system \eqref{modelH2} experiences a Hopf bifurcation for $(K, \tau)\in \mathscr{L}_0$. $E^\ast$ is unstable  for $(K, \tau)\in V_a$, and a periodic solution exists for  $(K, \tau)$ in a tubular neighborhood of $\mathscr{L}_0$.

The existence of additional Hopf bifurcation curves $\mathscr{L}_n (n=1, 2, \cdots)$ depends on the choice of parameters. If such curves exist, they are positioned above $\mathscr{L}_0$ and do not intersect any other Hopf bifurcation curve.  Numerical studies suggest two possible topological structures for $\mathscr{L}_n$: $\mathscr{L}_n\simeq \mathbb{R}$, or $\mathscr{L}_n\simeq \mathbb{S}^1$, where $\simeq$ means ``is topologically isomorphic''. Fig. \ref{bf2} illustrates an example where $\mathscr{L}_1\simeq \mathbb{R}$, $\mathscr{L}_2\simeq \mathbb{R}$, $\mathscr{L}_3\simeq \mathbb{S}^1$.

The region $V_a$ is divided into several subregions by the Hopf bifurcation curves $\mathscr{L}_n (n=1, 2, \cdots)$. Whenever   $(K, \tau)$ crosses $\mathscr{L}_n$, the Hopf bifurcation occurs, and a periodic solution emerges in a tubular neighborhood of $\mathscr{L}_n$. For $K<K_0$, if $\mathscr{L}_n (n=2, 3, \cdots)$ transversally intersects the horizontal line exactly twice, by Theorem \ref{smallK}, the connected components $\mathscr{C}(E^*,\tau_n^{-}, \frac{2\pi}{w(\tau_n^{-})})$ and  $\mathscr{C}(E^*,\tau_n^{+}, \frac{2\pi}{w(\tau_n^{+})})$ coincide. That is,  a periodic solution emerging at $(\tau_n^-, K)$ will terminate at $(\tau_n^+, K)$. Determining the number of periodic solutions within the subregions between successive Hopf bifurcation curves is challenging. This is due to the potential presence of other bifurcations affecting periodic solutions. 
Even in the absence of such bifurcations, tracing the periodic solutions as $(\tau,  K)$ crosses $\mathscr{L}_n (n=0, 1, 2 \cdots)$ remains difficult, as there is no definitive criterion for predicting the onset and termination of connected components in global Hopf bifurcation when $K>K_0$ or $\chi(n)>2$.

\begin{remark}
   The bifurcation diagram in Fig. \ref{bf2} is not complete, as there may exist additional bifurcation curves of periodic solutions, such as saddle-node bifurcations of limit cycles and period-doubling bifurcations. A detailed investigation of these bifurcations is beyond the scope of this work.
\end{remark}


\begin{remark}\label{R5}
If $cm>(1+\sqrt{2})d$, define
 $$\tau^\ast=\arg \min_{\tau\in(0, \bar{\tau})}\{\ell_0(\tau, K_0)=0\}.$$
Then, for any point $(\tau, K)\in\mathscr{L}_0$ with $0<\tau<\tau^\ast$, we have $K<K_0$ (see Fig. \ref{bf2} (b)). 
\end{remark}

\noindent {\bf Biological significance of $\tau^\ast$.} {\it Predator-prey coexistence via oscillatory patterns can occur at lower environmental carrying capacities when delay ($\tau > 0$) is introduced, compared to scenarios without delay ($\tau = 0$) under the condition that $(1+\sqrt{2})d<cm$, and $0<\tau<\tau^\ast$.} The significance of $\tau^\ast$ will be further illustrated in the next section.

\section{Discussion}

The classical predator-prey model with Holling type II functional response typically exhibits relatively simple dynamical behavior. By contrast, the incorporation of a maturation delay introduces significantly richer dynamics and provides a more realistic framework for modeling predator-prey interactions observed in nature. In this section, we interpret the effects of delay on population persistence and oscillatory coexistence.

\setlength{\leftmargini}{0.3cm}
\begin{itemize}
    \item {\it Threshold dynamics for prey--predator coexistence.}

The coexistence of prey and predators requires $\tau<\tau_{\max}$, which is equivalent to $K>K_1$. Here
\[
    K_1=\frac{ad}{cme^{-d\tau}-d},
\]
and thus
\[
    K_1>\frac{ad}{cme^{-d\tau}-d}\Big|_{\tau=0}=K_c.
\]
As $\tau$ increases, $K_1$ increases, meaning that a higher environmental carrying capacity is needed for both species to coexist. This threshold is illustrated by the transcritical bifurcation curve $\mathscr{L}_c$ in Fig.~\ref{bf2}, together with the critical values $K_1$ and $K_c$ shown in Fig.~\ref{kk}. Biologically, this behavior is intuitive: larger delays reduce predator survival ($e^{-d\tau}$), forcing predators to rely on greater prey abundance to compensate for mortality. As a consequence, prey populations face stronger predation pressure.

 \item {\it Oscillatory dynamics of prey and predator populations.}

Oscillatory dynamics emerge once the condition $\ell_0(\tau, K)>0$ is satisfied. A secondary threshold involving the predator death rate further determines whether sustained oscillations are possible.

\begin{enumerate}
\item[(1)] If $cm\leq (1+\sqrt{2})d$, predator mortality is relatively high, leading to low survival $e^{-d\tau}$. In this case, predators require more prey to maintain oscillations. Since survival decreases as $\tau$ increases, the critical carrying capacity rises monotonically with $\tau$. This behavior is reflected by the monotone increase of $\mathscr{L}_0$ in Fig.~\ref{bf2}(a).

\item[(2)] If $cm>(1+\sqrt{2})d$, predator mortality is comparatively low, yielding higher survival rates. For small delays $\tau \in(0,\tau^\ast)$, oscillatory coexistence can be sustained at lower values of $K$. However, once $\tau$ exceeds $\tau^\ast$ (with $\tau \in(\tau^\ast,\bar{\tau})$), survival declines sufficiently to require larger $K$ values for oscillations to persist. This non-monotone behavior of $\mathscr{L}_0$ is illustrated in Fig.~\ref{bf2}(b), with $\tau^\ast$ defined in Remark~\ref{R5}.
\end{enumerate}
\end{itemize}

\begin{figure}[!ht]
\begin{center}
\includegraphics[angle=0, width=0.76\textwidth]{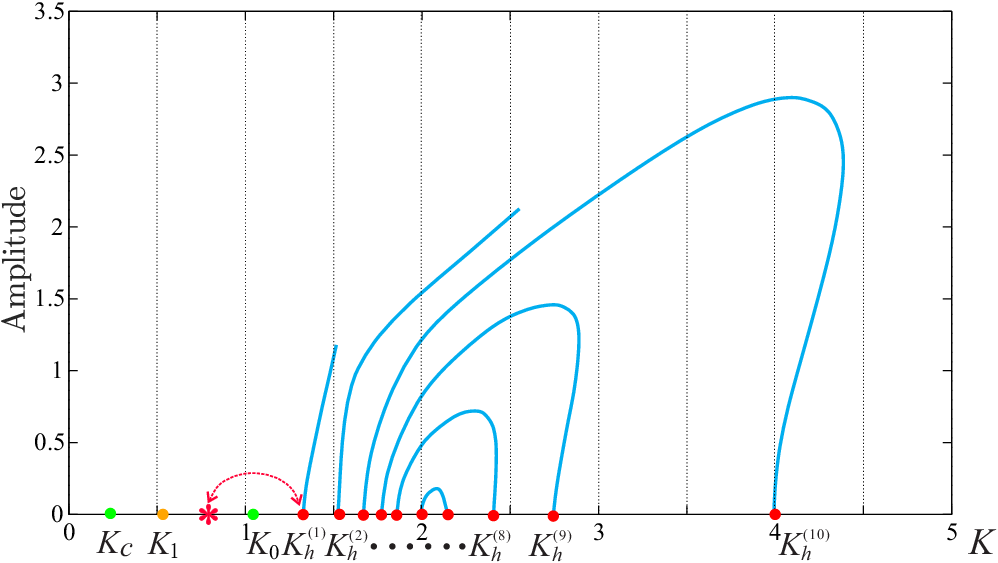}
\caption{\small{Global Hopf bifurcation and connected components as functions of $K$ for $\tau>\tau^\ast$. The orange dot corresponds to the transcritical bifurcation threshold $K_1$, while the red dots denote the Hopf bifurcation thresholds $K_h^{(i)}$ ($i=1,2,\dots,10$). The location of $K_h^{(1)}$ for $\tau \in (0,\tau^\ast)$ is indicated by ``${\color{red}{\ast}}$''. Critical values $K_c$ and $K_0$ denote the transcritical and Hopf bifurcation thresholds of the ODE model ($\tau=0$). Detailed descriptions are provided in this section.}}\label{kk}
\end{center}
\end{figure}

To illustrate these effects, consider the bifurcation diagram of Fig.~\ref{kk}, where $K$ is treated as the bifurcation parameter. Fixing $\tau=26$, $r=30$, $m=1$, $c=4$, $d=0.1$, and $a=1$, we have $cm>(1+\sqrt{2})d$ and $\tau>\tau^\ast=24.1$. The critical bifurcation values are
\[
K_c=0.25,\quad K_1=0.51,\quad K_0=1.0512,\quad K_h^{(1)}=1.328,\quad K_h^{(2)}=1.54,\quad K_h^{(3)}=1.67,\quad K_h^{(4)}=1.76,
\]
\[
K_h^{(5)}=1.85,\quad K_h^{(6)}=1.994,\quad K_h^{(7)}=2.135,\quad K_h^{(8)}=2.3956,\quad K_h^{(9)}=2.7395,\quad K_h^{(10)}=3.996.
\]
As $K$ increases, system \eqref{modelH2} undergoes a transcritical bifurcation at $K=K_1$ and a sequence of Hopf bifurcations at $K=K_h^{(i)}$. In particular, $K_h^{(1)}>K_0$ and $K_1>K_c$, showing that delay increases the environmental resources needed for both coexistence and oscillatory dynamics.

When $\tau$ decreases into the interval $\tau\in(0,\tau^\ast)$, the first Hopf bifurcation threshold $K_h^{(1)}$ shifts to the position marked ``${\color{red}{\ast}}$'' between $K_1$ and $K_0$ in Fig.~\ref{kk}. This indicates that smaller delays may allow oscillatory coexistence with fewer resources.

In summary, predator populations are highly sensitive to delays arising from environmental or climatic factors such as extended winters, elevated temperatures, or resource scarcity. Such delays may either increase the resource requirements for persistence or, paradoxically, facilitate coexistence under more restrictive conditions. From an ecological perspective, these results underscore that small perturbations to maturation timing can drive ecosystems toward coexistence or collapse, with potential implications for both conservation and pest management strategies.

\section*{Conflicts of Interest}

The authors declare that they have no conflicts of interest.

\section*{Acknowledgments}
The research of C. Shan was partially supported by Simons Foundation-Collaboration Grants for Mathematicians 523360. The research of H. Wang was partially supported by the Natural Sciences and Engineering Research Council of Canada (Individual Discovery Grant RGPIN-2025-05734 and Discovery Accelerator Supplement Award RGPAS-2020-00090) and the Canada Research Chairs program (Tier 1 Canada Research Chair Award). We used ChatGPT to polish some of the writing.

\end{document}